\documentclass[12pt,a4paper]{paper}

\usepackage{amssymb,amsmath, amsthm, amsfonts}
\usepackage{enumerate}
\usepackage{geometry}
\usepackage{tikz}
\usepackage{footmisc}

\newtheorem{definition}{\vspace{1mm}Definition}[section]
\newtheorem{remark}[definition]{\vspace{1mm}Remark}
\newtheorem{lemma}[definition]{\vspace{1mm}Lemma}
\newtheorem{theorem}[definition]{\vspace{1mm}Theorem}
\newtheorem{corollary}[definition]{\vspace{1mm}Corollary}
\newtheorem{proposition}[definition]{\vspace{1mm}Proposition}
\newtheorem{notation}[definition]{\sc Notation}
\newtheorem{exam}[definition]{\vspace{1mm}Example}

\newcommand{\axD}{\textsf{(D)}}
\newcommand{\axK}{\textsf{(K)}}
\newcommand{\axKu}{\textsf{(K1)}}
\newcommand{\axKd}{\textsf{(K2)}}
\newcommand{\axMu}{\textsf{(M1)}}
\newcommand{\axMd}{\textsf{(M2)}}
\newcommand{\axMt}{\textsf{(M3)}}
\newcommand{\axMc}{\textsf{(M4)}}
\newcommand{\axDNu}{\textsf{(DN1)}}
\newcommand{\axDNd}{\textsf{(DN2)}}
\newcommand{\axT}{\textsf{(T)}}
\newcommand{\axKp}{\textsf{(K')}}
\newcommand{\axKup}{\textsf{(K1')}}
\newcommand{\axKdp}{\textsf{(K2')}}
\newcommand{\axMtp}{\textsf{(M3')}}
\newcommand{\axMcp}{\textsf{(M4')}}
\newcommand{\axIu}{\textsf{(I1)}}
\newcommand{\axId}{\textsf{(I2)}}
\newcommand{\axfour}{\textsf{(4)}}
\newcommand{\axfive}{\textsf{(5)}}
\newcommand{\MP}{\textsf{MP}}
\newcommand{\axKpp}{\textsf{(K'')}}
\newcommand{\axKupp}{\textsf{(K1'')}}
\newcommand{\axKdpp}{\textsf{(K2'')}}
\newcommand{\axMtpp}{\textsf{(M3'')}}
\newcommand{\axMcpp}{\textsf{(M4'')}}
\newcommand{\axIup}{\textsf{(I1')}}
\newcommand{\axIdp}{\textsf{(I2')}}
\newcommand{\axIupp}{\textsf{(I1'')}}
\newcommand{\axIdpp}{\textsf{(I2'')}}
\newcommand{\Kdet}{\textsf{(Kdet)}}

\geometry{left=2cm,right=2cm,top=2cm,bottom=2cm}

\title{Modal Logic With Non-deterministic Semantics: Part I - Propositional Case}
\author{Marcelo E.~Coniglio$^{1}$, Luis Fari\~nas del Cerro$^{2}$ and Newton M. Peron$^{3}$\\ [2mm] %
{\small $^1$Institute of Philosophy and the Humanities (IFCH) and}\\[-2mm]
{\small Centre for Logic, Epistemology and The History of Science (CLE),}\\[-2mm]
{\small University of Campinas (UNICAMP), Campinas, SP, Brazil.}\\[-2mm]
{\small E-mail:~{coniglio@cle.unicamp.br}}\\[1mm]
{\small $^2$IRIT, Université de Toulouse,  CNRS, France. }\\[-2mm]
{\small E-mail:~{farinas@irit.fr}}\\[1mm]
{\small $^3$Academic Unit of Philosophy and Human Sciences,}\\[-2mm]
{\small Federal University of Southern Frontier (UFFS), Chapec\'o, SC, Brazil.}\\[-2mm]
{\small E-mail:~{newton.peron@uffs.edu.br}}}

\begin{document}

\maketitle

\begin{abstract}

In 1988, Ivlev proposed  four-valued non-deterministic semantics for modal logics in which the alethic \axT\ axiom holds good. Unfortunately, no completeness was proved. In previous work, we proved completeness for some Ivlev systems and extended his hierarchy, proposing weaker six-valued systems in which the \axT\ axiom was replaced by the deontic \axD\ axiom. Here, we eliminate both axioms, proposing even weaker systems with eight values. Besides, we prove completeness for those new systems.
It is natural to ask if a characterization by finite ordinary (deterministic) logical matrices would be possible for all those systems. We will show that finite deterministic matrices do not characterize any of them.

\end{abstract}

\

\noindent {\bf Keywords:} Propositional modal logic, non-deterministic semantics, finite matrices, recovery operators.

\section*{Introduction}

Non-deterministic matrices (Nmatrices) is an useful alternative  in order to give a satisfactory semantic account of certain logics that are not characterizable by finite deterministic matrices. In deterministic matrices, the truth-value of a complex formula is uniquely
determined by the truth-values of its subformulas by means of truth-funcions. In a non-deterministic matrix, the truth-value of a complex formula can be chosen non-deterministically out of some non-empty set of options\footnote{Acording to \cite{avr:zam:11}.}.

It is worth noting that most part of standard modal systems can not be characterizable by finite deterministic matrices.\footnote{In fact, Dugundji proved in \cite{dug:40} this result for Lewis' hierarchy {\bf S1}-{\bf S5}, but this result can be easily extended to most part of standard modal systems. See \cite{con:per:14}.}  Taking this into account, it is natural to think  if finite non-deterministic matrices can be considered as a useful approach to modal logic. Actually, Ivlev had proposed  in the eighties a semantics of four-valued Nmatrices\footnote{Ivlev called them {\em quasi-matrices}. See \cite{ivl:88}.}  for implication and modal operators, in order to characterize a hierarchy of  weak modal logics without necessitation rule\footnote{Necessitation Rule is a rule that infers, from a given theorem, that it is necessarily a theorem. Namely: if $\vdash \alpha$ then $\vdash \Box \alpha$.}.

Ivlev assumed that  everything that is necessarily true is actually true. In order to considerate systems without this assumption, we extended Ivlev's hiearchy\footnote{See in \cite{con:cer:per:15}}, by adding two new truth-values. In section 1, we will discuss how those four and six truth-values  could be interpreted in terms of modal concepts. In the first case, we will offer an alethic perspective; in the second one, we will suggest a deontic interpretation. 

In a deontic context, we assume that what is necessarily (or obliged) is, at least, possible (or permitted). In this paper, we will propose weaker systems in which this principle does not hold. So, we can have modal contradictory situations, that means, propositions that are necessarily true but impossible. This requires eight truth-values. We will offer an epistemic interpretation of them in Section~\ref{values}.

In Section~\ref{operators}, we will argue why non-deterministic implication and modal operators are required considering how modal concepts work in the natural language. We will start with four values, going towards six and eight values. After that, we will prove in Section~\ref{Km-compl} completeness for some modal systems with eight truth-values.

Non-deterministic propositional modal logics presented here clearly constitute a hierarchy. The point here, as shown in Section~\ref{DAT}, it is not only that one system is included in another one. Moreover, it will be shown that any system of the hierarchy can recover all the inferences of the stronger ones by means of Derivability Adjustment Theorems.

In Ivlev's semantics, four kinds of implications are considered: those who have three, two, one or zero cases of non-determinism. However, no completeness result is presented. In another work, we proved completeness for some systems whose implication has three cases of non-determinism\footnote{In \cite{con:cer:per:15}, we proved those completeness using Ivlev's rule (DN). This rule, unfortunately, is not semantically sound, as it was remarked and fixed in \cite{omo:sku:16}. In \cite{con:cer:per:17}, we replaced the rule by the axioms (DN1) and (DN2), also fixing the problem.}. In section \ref{sectDelta}, we will prove completeness with respect to deterministic Ivlev's implication, showing some relation between this modal systems, four-valued G\"odel  logic, four-valued \L ukasiewicz logic and Monteiro-Baaz's  $\Delta$ operator.

It is also natural to ask whether the use of  finite non-deterministic matrices is essential  for dealing with those hierarchies, or if a characterization by finite ordinary deterministic matrices would be  possible. In section \ref{SectDugun}, we will show that finite deterministic matrices do not characterize any of these systems that are complete with respect to finite Nmatrices.

\section{Interpreting modal concepts as truth-values} \label{values}

Consider the modal concepts of necessarily true, possibly true and actually true. Their negations are necessarily false (or impossible), possibly false and actually false,  respectively. If we consider contingent as being, at the same time, possible true and possible false, this permits us to define eight truth-values:

\begin{itemize}
  \item[] \ $T^+$: necessarily, possibly and actually true;
  \item[] \  $C^+$: contingently and actually true;
  \item[] \  $F^+$: impossible, possibly false but actually true;
  \item[] \  $I^+$: necessary true, impossible and actually true;
  \item[] \  $T^-$: necessarily and possibly true but actually false;
  \item[] \  $C^-$: contingently and actually false;
  \item[] \  $F^-$: impossible, possibly false and actually false;
  \item[] \  $I^-$: necessarily true, impossible and actually false.\footnote{These truth-values can be formalized in a modal language (assuming, as usual, the equivalences $\neg\Box p \equiv \Diamond \neg p$ and $\neg\Diamond p \equiv \Box \neg p$)  as follows: $T^+$: $\Box p \wedge \Diamond p \wedge p$; $C^+$:  $\neg\Box p \wedge \Diamond p \wedge p$; $F^+$:  $\Box \neg p \wedge \Diamond \neg p \wedge p$; $I^+$: $\Box p \wedge \neg\Diamond p \wedge p$; $T^-$: $\Box p \wedge \Diamond p \wedge \neg p$; $C^-$:  $\neg\Box p \wedge \Diamond p \wedge \neg p$; $F^-$:  $\Box \neg p \wedge \Diamond \neg p \wedge \neg p$; $I^-$: $\Box p \wedge \neg\Diamond p \wedge \neg p$. Modal languages will be discussed in Section~\ref{operators}.}
\end{itemize}

In order to better understand the meaning of those truth-values, we can, first of all, grouping them into two subsets:

\begin{itemize}
  \item[] \  $+ = \{T^+,C^+,F^+,I^+ \}$ (actually true);
  \item[] \  $- = \{T^-,C^-,F^-,I^- \}$ (actually false).
\end{itemize}

When a proposition receives a truth-value in $+$, we understand that it is actually or factually the case or, in Kripkean terms, it is true ``in the real world''. Otherwise, it receives a truth-value in $-$; in that case, we interpret that the proposition is not actually or factually  the case, or it is false ``in the real world''.
The  truth-values in ``actually true'' are designated but the truth-values in ``actually false'' are not. All the modal systems here considered are extensions of Propositional Classical Logic - {\bf PC}. That means: all the tautologies are actually true and all the contradictions are actually false.

Kripke semantics for propositional modal logic also respects  classical truth-tables, but there are models when the accessibility relation is empty in which all the formulas are necessarily true but also impossible\footnote{Those models were not, in fact, proposed by Kripke according to  \cite[p.  49]{hug:cre:96}. Actually, Kripke assumes in \cite{kri:63} that the accessibility relation for normal system is, at least, reflexive. Segerberg  proposes an equivalent axiomatic in 1971, defending that, against ``followers of Kripke's terminology'', normal modal logics should cover those models whose the accessibility relation is empty, see \cite[p. 12]{seg:71}.\label{K}}. Analogously, the values $I^+$ and $I^-$ expresses those modal contradictory scenarios.

Kripke argues that\footnote{ In \cite[p. 34]{kri:81}.} his semantics, differently from Kant, distinguishes the concepts of necessary and \emph{a priori}. In fact, for Kant, sentences (or ``judges'', in Kantian terms)  are necessary if and only if we can think of them \emph{a priori}, that is, not empirically\footnote{In the first reading of \emph{Critique of Pure Reason}, the reciprocate does not seem to be true. Indeed, a judge is \emph{a priori} if and only if it is necessarily or strictly universal. But necessity and strictly universality belong together inseparably \cite[(B2-B3)]{kant:98}. Finally, Kant affirms in \emph{Prolegomena} that necessity and cognition taken for \emph{a priori} are the same \cite[(4:277)]{kant:92} .}. But Kripke claims that ``necessary'' has a metaphysical meaning captured by his semantics: something is necessarily true if and only if it is true in all possible worlds. Therefore, in Kripkean terms, we can know something empirically that is true in all the possible worlds, and so, that would be necessary \emph{a posteriori}. Reciprocally, we can know something \emph{a priori} but there could be a world in which that would be false, so it would be contingent \emph{a priori}.

The non-deterministic semantics proposed in the present paper, in turn, intends to recuperate the identification between being necessary and being \emph{a priori} that most part of philosophers does, as even Kripke admits. In fact, this identification is not only in accordance with Kant's point of view. A correlation between the notions of being necessary and being \emph{a priori} is found in others important philosophers, for instance, Descartes\footnote{Descartes  does not use the term \emph{a priori} in his \emph{Meditations on First Philosophy}, see \cite{des:11}. However, he  separates the study of composite things, like medicine and physics, from simpler things, like geometry and arithmetic (AT 20). Gueroult, a Descartes' commentator, arguments that, on First Meditation, Descartes presents some natural reasons to doubt about the composite things, instead of the simpler ones, since they are the necessary conditions of all possible representations, that is, for those things that exist in nature, see \cite[p. 17]{gue:68}. Those simpler things are  intuited ``with the eye of the mind'' (AT 36). Besides, mathematical truths are ``the most certain of all'' (AT 65) and in mathematics there is ``true and certain knowledge of it [scientia]'' (AT 70). The Cartesian characterizations of arithmetic and geometry knowledge is very near to the Kantian notion of \emph{a priori}. Because of those things clearly and distinctly intuited are necessarily true (AT 70), we can infer that Descartes defends, agreeing with Kant, that \emph{a priority} and necessity are attached concepts. This approximation must be done, of course, with some caveats. For instance, both philosophers could disagree with respect to the role of the imagination in \emph{a priori} knowledge, compare, in particular, \emph{Meditations} (AT 72-73) and \emph{Critique} (B 103-104).  } and more contemporary, 
Russell\footnote{In \cite[p. 64-65]{russ:10}, Russell defends that the term \emph{necessary} should not be applied to propositions as ordinary traditional philosophy does; instead it should be applied to propositional functions: if a propositional function is always true, it is \emph{necessary} true (for ex. ``if $x$ is a man, $x$ is mortal''), if it is sometimes true, it is \emph{possible} (for ex. ``$x$ is a men'') and if it is never true, it is \emph{impossible} (for ex. ``$x$ is a unicorn'').  Further \cite[p. 75]{russ:10}, he says that we know logical propositions \emph{a priori} and one of his examples is the sentence ``If all $a$'s are $b$'s and $x$ is an $a$, then $x$ is a $b$''. That means that we know \emph{a priori} all logical theorems. For soundness, any logical theorem is always true. In other words, what we know \emph{a priori} is necessarily true.\label{russell}}.

To endorse that perspective, the eight values should be grouped as follows: 

\begin{itemize}
  \item[] \ $T = \{T^+,T^-\}$ ( \emph{a priori} true)
  \item[] \  $C = \{C^+,C^-\}$ ( \emph{a posteriori} true or false)
  \item[] \  $F = \{F^+,F^-\}$ ( \emph{a priori} false)
  \item[] \  $I = \{I^+,I^-\}$ (\emph{a priori} true and false )
\end{itemize}

In conceptual terms, let $\mathbb{A}$, 
$\mathbb{N}$ and $\mathbb{P}$ represent the concepts of \emph{actual}, \emph{necessary} and \emph{possible}, respectively. The Venn-diagram below shows the relationship between the truth-values and those concepts:

\begin{center}
\begin{tikzpicture}[thick] 
\draw (2.7,-2.54) rectangle (-1.5,1.5) node[below right] {}; 
\draw (0,0) circle (1) node[above,shift={(0,1)}] {$\mathbb{A}$};
\draw (1.2,0) circle (1) node[above,shift={(0,1)}] {$\mathbb{N}$};
\draw (.6,-1.04) circle (1) node[shift={(1.1,-.6)}] {$\mathbb{P}$}; 
	\node at (.6,-.4) {$T^+$}; 
	\node at (1.2,-.7) {$T^-$}; 
	\node at (0,-.7) {$C^+$}; 
	\node at (1.4,.2) {$I^-$}; 
	\node at (.6,.3) {$I^+$}; 
	\node at (-.2,.2) {$F^+$}; 
	\node at (.5,-1.4) {$C^-$}; 
	\node at (-1,-2) {$F^-$}; 
\end{tikzpicture}
\end{center}

The values $I^+$ and $I^-$ do represent  very artificial situations. In fact, it is quite reasonable to admit, for example, that mathematical knowledge is always \emph{a priori}\footnote{Kant clearly claims that in \emph{Critique} (B4-B5) and in \emph{Prolegomena} (4:280), but similar thesis can be found in other important philosophers. Descartes doubts of all mathematical certainties througout the fiction of the Evil Genius, but this doubt is, as Gueroult stresses in \cite[p. 20-22]{gue:68}, methodological, metaphysical and hyperbolic. Once it is guaranteed that God could not deceive us, it is impossible to doubt of mathematical truth, unless obvious contradictions (AT 36). Russell seems agreeing with that mathematical knowlege is \emph{a priori}. Indeed, it would be contradictory if some mathematical theorem would be false. According to \cite{dem:cla:05}, Russell's logicism consists in admit that logic and mathematics are both synthetic \emph{a priori} and, as argued in note \footref{russell}, all logical theorems are, for Russell, known \emph{a priori}. }. Assuming this, these modally contradictory scenarios would express that mathematics would be inconsistent. It is hard to believe that most part of philosophers and mathematician would defend it. Anyway, non-deterministic modal semantics could express even those extreme situations.

It is more plausible to admit that our \emph{a priori} knowledge  is always consistent. As it will be shown, it is equivalent to admit that what is necessary is possible. In other words, we can not have propositions that are, at same time, \emph{a priori} true and false . 

That assumption forces a new relationship between the concepts of necessary and possible. Indeed, the concept of necessary will be included in the concept of possible, as expressed by the Venn-diagram below:

\begin{center}
\begin{tikzpicture}[thick] 
\draw (2.6,-1.8) rectangle (-2,1.8) node[below right] {}; 
\draw (-0.7,0) circle (1) node[above,shift={(0,1)}] {$\mathbb{A}$};
\draw (0.6,0) ellipse (1cm and 0.5cm) node[above,shift={(0,0.4)}] {$\mathbb{N}$};
\draw (0.6,0) ellipse (1.7cm and 1cm) node[shift={(1.5,0.8)}] {$\mathbb{P}$}; 
	\node at (-1.4,0) {$F^+$}; 
	\node at (-0.7,0) {$C^+$}; 
	\node at (0,0) {$T^+$}; 
	\node at (0,0) {$T^+$}; 
	\node at (0.8,0) {$T^-$}; 
	\node at (2,0) {$C^-$};	
	\node at (-1.5,-1.3) {$F^-$}; 
\end{tikzpicture}
\end{center}

In alethic contexts, it seems reasonable to admit that an \emph{a priori} judge can not be falsified by experience. In that sense, what is necessarily true is aways actually true.\footnote{It is easy to check that Kant respects that principle: according to him, anyone that is in an \emph{a priori} domain can not be refuted through experience (B8). Considering that necessity implies \emph{a priori} and considering that, according to our interpretation of modal truth-values, what is actually true is true through experience, that is, \emph{a posteriori}; we can conclude then that it would be contradictory to admit that a sentence would be necessarily true and actually false, since this sentence would be, at the same time, \emph{a priori} and \emph{a posteriori}, what is an absurd.  Kripke explicitly agrees with this principle (see \cite[p. 36]{kri:81}), even though his semantics forces that the relation between the possible worlds is reflexive in order to validate it. Both normal and non-normal modal logics, in Kripkean terms, respects this principle. For more technical results, see \cite{kri:63} and \cite{kri:66}.} In other words, it is hard to accept that something actually true could be impossible.

But alethic interpretation is not the one that makes more sense in a configuration with six truth-values. Since what is obligatory is, in general, permitted, we could use the same diagram below in order to represent the relationship between the concepts of \emph{obligatory}, \emph{permitted} and \emph{actual}. They are represented above by the sets $\mathbb{N}$, $\mathbb{P}$ and $\mathbb{A}$, respectively. In a deontic perspective, the values should be interpreted as follows:

\begin{itemize}
  \item[] \  $T^+$: Obligatory and actually true
  \item[] \  $C^+$: Permitted and actually true
  \item[] \  $F^+$: Forbidden but actually true (infringed) 
  \item[] \  $T^-$: Obligatory but actually false (infringed)
  \item[] \  $C^-$: Permitted and actually false
  \item[] \  $F^-$: Forbidden and actually false 
\end{itemize}

In deontic logics, it makes no sense to consider that what is obligatory is actually the case. Indeed, there are laws but sometimes people do not respect them, and the law is infringed. 

If we come back to an alethic perspective, we could add the principle that what is \emph{a priori} true is, indeed, actually true. Thus, what is necessary would be actual and what is actual would be possible.\footnote{This is essentially the meaning of the modal axiom (T), namely: $\Box p \to p$ and $p \to \Diamond p$.}

Let again $\mathbb{A}$, 
$\mathbb{N}$ and $\mathbb{P}$ represent the concepts of \emph{actual}, \emph{necessary} and \emph{possible}, respectively. The Venn-diagram that represents both inclusion clauses is equivalent to that one:

\begin{center}
\begin{tikzpicture}[thick] 
\draw (2,-1.8) rectangle (-2,1.8) node[below right] {}; 
\draw (0,0) ellipse (0.5cm and 0.3cm) node[above,shift={(0,0.7)}] {$\mathbb{A}$};
\draw (0,0) ellipse (1.2cm and 0.8cm) node[above,shift={(0,1.2)}] {$\mathbb{P}$};
\draw (0,0) ellipse (1.8cm and 1.3cm) node[shift={(0,0.5)}] {$\mathbb{N}$}; 
	\node at (0,0) {$T^+$}; 
	\node at (0.8,0) {$C^+$}; 
	\node at (1.5,0) {$C^-$};	
	\node at (-1.5,-1.4) {$F^-$}; 
\end{tikzpicture}
\end{center}

Since necessity entails actuality,  the values $T^-$ and $F^-$ must be eliminated. The four values would be now interpreted as follows:

\begin{itemize}
  \item[] \  $T^+$: necessarily true (or  \emph{a priori} true)
  \item[] \  $C^+$: contingently true (or \emph{a posteriori} true )
  \item[] \  $C^-$: contingently false (or \emph{a posteriori} false )
  \item[] \  $F^-$: necessarily false / impossible (or \emph{a priori} false )
\end{itemize}

But if necessity implies actuality and if actuality implies possibility, we could have, in fact, only two concepts: \emph{actual} and \emph{contingent}. Let us represent them by the sets $\mathbb{A}$ and $\mathbb{C}$, respectively. The Venn-diagram below with only those two concepts corresponds to the last one above:

\begin{center}
\begin{tikzpicture}[thick] 
\draw (2.5,-1.5) rectangle (-1.5,1.8) node[below right] {}; 
\draw (0,0) circle (1) node[above,shift={(0,1)}] {$\mathbb{A}$};
\draw (1,0) circle (1) node[above,shift={(0,1)}] {$\mathbb{C}$};
	\node at (-0.3,0) {$T^+$}; 
	\node at (0.5,0) {$C^+$}; 
	\node at (1.5,0) {$C^-$};	
	\node at (-1,-1) {$F^-$}; 
\end{tikzpicture}
\end{center}

We are not claiming here that non-deterministic semantics is the correct way to formulate Kant's  logic. But in the same way that there is an approximation between the Leibnizian  notion of ``the best of possible worlds'' and  relational semantics\footnote{According to Noonan, Kripke semantics ``brought back into the philosophical mainstream the Leibnizian language according to which necessity is truth in all possible
worlds and possibility is truth in some.'', see \cite{noo:13}, p. 8.}, Ivlev's semantics can be seen as an approximation between the notion of necessary and \emph{a priori}, according to a Kantian perspective.
 
\section{Some interpretations of logical operators} \label{operators}

Let us consider an extension of propositional language based on $\neg$ and $\to$, adding $\Box$ and $\Diamond$ as unary connectives.\footnote{We could use $\Box$ as the only modal operator, defining $\Diamond \equiv_{def} \neg \Box \neg$. Reciprocally, we could use $\Diamond$ as the only modal operator, defining $\Box \equiv_{def} \neg \Diamond \neg$. We will consider both as the primitive modal connectives in order to facilitate the readability of modal axioms.} In classical propositional logic, truth-tables are a formal semantics used in order to interpret the linguistic notions of ``deny'' and ``implies''. Modal propositional logic normally extends it, intending to capture the linguistic notion of ``necessary'' and ``possible'' through the symbol $\Box$ and $\Diamond$, respectively.

As usually modal logic are considered as an extension of {\bf PC}, it is expected that formulas that are tautologies in {\bf PC} should continue being  true in any modal semantics. If we are thinking on modal-truth value terms, theorems in {\bf PC} should always receive a designated truth-value. 

At this point, it is convenient to recall the notion of Nmatrix semantics introduced by A.~Avron and I.~Lev. If $\Sigma$ is a propositional signature, then $For(\Sigma)$ will denote the algebra of formulas freely generated by $\Sigma$ from a given denumerable set $\mathcal{V}=\{p_1,p_2,\ldots\}$  of propositional variables. 

\begin{definition} \label{defmulti} Let $\Sigma$ be a propositional signature. 
A {\em multialgebra} (a.k.a. {\em hyperalgebra} or {\em non-deterministic algebra}) over $\Sigma$ is a pair $\mathcal{A}=(A,\sigma_\mathcal{A})$ such that $A$ is a nonempty set (the {\em domain} of $\mathcal{A}$) and $\sigma_\mathcal{A}$  is a function assigning, to each $n$-ary connective $c$ in $\Sigma$, a function (called {\em multioperation} or {\em hyperoperation})  $c^\mathcal{A}:A^n \to \wp(A)-\{\emptyset\}$. 
\end{definition}

\begin{definition} [See~\cite{avr:lev:01}] \label{valNmat} 
Let $\mathcal{M}=(\mathcal{A},D)$ be an Nmatrix over a signature $\Sigma$. A {\em valuation} over $\mathcal{M}$ is a function $v: For(\Sigma)\to A$ such that, for every $n$-ary connective $c$ in $\Sigma$ and every $\varphi_1,\ldots,\varphi_n \in For(\Sigma)$:
$$v(c(\varphi_1,\ldots,\varphi_n)) \in c^\mathcal{A}(v(\varphi_1),\ldots,v(\varphi_n)).$$
\end{definition}

\begin{definition} [See~\cite{avr:lev:01}] \label{semNmat} 
Let $\mathcal{M}=(\mathcal{A},D)$ be an Nmatrix over a signature $\Sigma$, and let $\Gamma \cup \{\varphi\} \subseteq For(\Sigma)$. We say that $\varphi$ is a consequence of $\Gamma$  in the Nmatrix $\mathcal{M}$, denoted by $\Gamma\models_{\mathcal{M}} \varphi$, if the following holds: for every valuation  $v$ over $\mathcal{M}$, if $v[\Gamma] \subseteq D$ then $v(\varphi) \in D$. In particular, $\varphi$ is valid in $\mathcal{M}$, denoted by $\models_{\mathcal{M}} \varphi$, if $v(\varphi) \in D$ for every valuation  $v$ over $\mathcal{M}$.
\end{definition}

From now on the multioperator associated by a multialgebra $\mathcal{A}=(A,\sigma_\mathcal{A})$ to a given connective $c$ will be simply denoted by $c$, when there is no risk of confusion.

Let us consider, for now, four-valued modal scenarios as described in the last section. Let $\Sigma$ be the modal signature containing the connectives $\neg$ (negation), $\Box$ (necessary) and $\to$ (implication), and let $For$ be the set of formulas generated by $\Sigma$ from $\mathcal{V}$.  Consider the following:

\begin{itemize}
  \item[] \  $+ = \{T^+,C^+\}$ (designated values)
  \item[] \  $- = \{C^-, F^-\}$ (non-designated values)
\end{itemize}

In order to extend classical propositional semantics, a four-valued Nmatrix with domain $\{T^+,C^+,C^-, F^-\}$ will be defined. The multioperators associated to the classical connectives $\neg$ and $\to$ of signature $\Sigma$  must respect, at least, the following conditions:
 
\begin{displaymath}
\begin{array}{c c}
\begin{array}{|l|l|}
   	   \hline  	   & \neg \\
\hline \hline T^+  & - \\
 		\hline C^+ & - \\
 		\hline C^- & + \\
 		\hline C^- & + \\
 		\hline	
\end{array}
&

\begin{array}{|l|l|l|l|l|}
 \hline \to  & T^+ & C^+ & C^- 	& F^-\\
   \hline \hline T^+ & + 	  & + 	& - 	& - \\
 		  \hline C^+ & + 	  & + 	& - 	& - \\
 		  \hline C^- & + 	  & + 	& + 	& + \\
 		  \hline C^- & + 	  & + 	& + 	& + \\	
 		  \hline
\end{array}
\end{array}
\end{displaymath}

\

But those restrictions seem  to be very week . Take, for instance, just the multioperator for negation. Consider the sentence:
\begin{center}
(S$_1$) \ \ ``2 plus 2 is equal to 4''
\end{center}
It is clear that (S$_1$) is true. Indeed, it seems not be \emph{a posteriori} true, but  \emph{a priori} true. It is natural to attribute to (S$_1$) the value $T^+$. Consider now the sentence:
\begin{center}
(S$_2$)  \ \ ``it is not the case that 2 plus 2 is not equal to 4''
\end{center}
It is quite reasonable to admit that if (S$_1$) is  \emph{a priori} true, then (S$_2$) would be also  \emph{a priori} true. If we accept, however, the table above, we can have the following situation:

\begin{center}
\begin{tabular}{|c|c|c|}
	\hline
	$p$ 			    	& $\neg p$ 	& $\neg \neg p$ \\
	\hline \hline 		 	&       	& $T^+$ \\	\cline{3-3}
	$T^+$                  & $C^-$		& $C^+$ \\  \cline{2-3} 
	 	   					&  	        & $T^+$ \\	\cline{3-3}
	                   		& $F^-$		& $C^+$ \\
	\hline
\end{tabular}
\end{center}

\

So, despite (S$_1$) being  \emph{a priori} true, (S$_2$) could be  \emph{a posteriori} true (the two rows ending with $C^+$), which would be weird. In order to fix that problem, we must force that the negation of  \emph{a priori} true is  \emph{a priori} false. If we accept this, the same argument can be applied to the other values and we would have the following  deterministic operator for negation:

\begin{displaymath}
\begin{array}{|l|l|}
   	   \hline  	   & \neg \\
\hline \hline  T^+ & \{F^-\} \\
 		\hline C^+ & \{C^-\} \\
 		\hline C^- & \{C^+\} \\
 		\hline F^- & \{T^+\} \\
 		\hline	
\end{array}
\end{displaymath}

\

The argument for constraining the multioperator for implication displayed above is a bit more complex. Take, for instance, two propositions:

\begin{center}
(S$_3$) \ \  $2 = 1 + 1$\\[2mm]

(S$_4$)  \ \ $1+1 = 2$
\end{center}

It is clear that (S$_3$) and (S$_4$) are both  \emph{a priori} true. So, it makes sense to consider that ``(S$_3$) implies (S$_4$)'' will be also  \emph{a priori} true. Consider now the sentence:

\begin{center}
(S$_5$)  \ \ $1 + 1 = 3$
\end{center}

It is clear that (S$_5$) is impossible or, in other words,  it  is  \emph{a priori} false. This forces that (S$_4$) implies  (S$_5$) will be impossible, that is, the implication as a whole must be \emph{a priori} false too.

On the other hand, things are not so simple for contingent situations. Take, for instance, two contingent true sentences:

$$
\begin{array}{ll}
\mbox{(S$_6$)} & \mbox{``Barack Obama is 1.85m height''}\\[2mm]
\mbox{(S$_7$)} & \mbox{``Barack Obama has short hair''}
\end{array}$$

Since both  (S$_6$)  and  (S$_7$) are contingently true, it is expected that ``(S$_6$) implies (S$_7$)'' is only contingently true, that means,  \emph{a posteriori} true. But take account now the sentence:

\begin{center}
  (S$_8$)  \ \ ``Barack Obama is more than 1.80m height''
\end{center}

In this situation, ``(S$_6$) implies (S$_8$)'' is not only contingently true but necessarily either. That is, ``(S$_6$) implies (S$_8$)'' should be  \emph{a priori} true. In order to capture this non-deterministic scenario, we will say that the function ``implies'' attributes to the par of values $\langle C^+, C^+ \rangle$ the set $+$.

Consider now the sentence:

\begin{center}
    (S$_9$)  \ \ ``Barack Obama has not short hair''
\end{center}

It is false, but only contingently. It is clear that both ``(S$_6$) implies (S$_9$)'' and ``(S$_7$) implies (S$_9$)'' will be aways actually false. It makes sense that ``(S$_6$) implies (S$_9$)'' is contingently false. However, someone would say that  ``(S$_7$) implies (S$_9$)'' must be impossible. In fact, how could be possible Obama have and not have short hair? It makes sense, thus, to consider that ``(S$_7$) does not implies (S$_9$)'' is a tautology. So, ``(S$_7$) implies (S$_9$)'' should be impossible if we consider the principle that tautologies are always necessarily true. That is what happens in Kripkean normal modal logics.

Abolishing this rule, we can argue that if ``(S$_7$) implies (S$_9$)''  would be  \emph{a priori} false, then (S$_7$) would be  \emph{a priori} true and that is absurd, because (S$_7$) is contingent. Therefore, the multioperator $\rightarrow$ must always attribute to the ordered pair of values $\langle C^+, C^- \rangle$ the singleton $\{C^-\}$. 

The point here is that a tautology is something that is always true for any valuation instead of ``true in all possible world''. So
``(S$_7$) implies (S$_9$)'' will receive as a set of values always a subset of designated-values, that is, a value in $\{T^+,C^+\}$. But the notion here of \emph{necessary} and \emph{possible} is not a logical one, but an epistemic one. As (S$_7$)
is only empirically true and (S$_9$) is empirically false, ``(S$_7$) implies (S$_9$)'' will be empirically false from an epistemic point of view. In particular, if (S$_9$) is the negation of (S$_7$), thus ``(S$_7$) implies (S$_9$)'' will be always false, because it is a contradiction, that means, it will receive as value always a subset of $-$. But this has nothing to do with the epistemic value of the complex sentence.\footnote{Kripke says: ``The terms `necessarily' and `\emph{a priori}' as applied to statements, are \emph{not} obvious synonyms. There may be a philosophical argument connecting then, perhaps even identifying  them; but an argument is required, not the simple observation that the two terms are clearly interchangeable'' \cite[p. 38]{kri:81}. Analogously, we could sustain that is not obvious that all the tautologies are necessarily true. A philosophical argument is required.}

We can find analogous arguments in order to justify the table below:

\begin{displaymath}
\begin{array}{|l|l|l|l|l|}
 \hline \to  & T^+ 		& C^+ 		& C^- 		& F^-\\
   \hline \hline T^+ & \{T^+\} & \{C^+\} 	& \{C^-\}	& \{F^-\} \\
 		  \hline C^+ & \{T^+\} & + 	  		& \{C^-\} 	& \{C^-\} \\
 		  \hline C^- & \{T^+\} & + 	 	 	& + 		& \{C^-\}\\
 		  \hline F^- & \{T^+\} & \{T^+\} 	& \{T^+ 	& \{T^+\} \\	
 		  \hline
\end{array}
\end{displaymath}

\

Finally, the  multioperator assigned to $\Box$ must capture the notion of \emph{necessary} in natural language, while the multioperator for
$\Diamond$ must capture the notion of \emph{possible}. Thus, consider the sentence:

\begin{center}
    (S$_{10}$)  \ \ ``Barack Obama is mortal''
\end{center}

If (S$_{10}$) is necessarily true, then the sentence

\begin{center}
    (S$_{11}$)  \ \ ``Barack Obama is necessarily mortal''
\end{center}

\noindent
is actually true. But is (S$_{11}$) contingently  or necessarily true? At first, no modal semantics need to compromise with the principle that what is necessary is also necessarily necessary.

Reciprocally, if (S$_{10}$) is not necessarily true, then (S$_{11}$) is actually false. But should be  (S$_{11}$) only contingently false or impossible? At first, no commitment is assured. If we assume that what is necessarily true is impossible to be false, this leads us to the following multioperators:

\begin{displaymath}
\begin{array}{|l|l|l|}
   	   \hline  	   & \Box 	& \Diamond\\
\hline \hline  T^+ & + 		& +\\
 		\hline C^+ & - 		& +\\
 		\hline C^- & - 		& +\\
 		\hline F^- & - 		& - \\
 		\hline	
\end{array}
\end{displaymath}

\

Let $\alpha$ and $\beta$ be formulas of the modal language $For$. The multioperator $\Diamond$ was obtained from $\Box$ and $\neg$ by interpreting, as usual, $\Diamond \alpha = \neg\Box\neg\alpha$. Observe that, by taking the usual notion of composition of multioperators in multialgebras, 

$$\Diamond x = \bigcup\{ \neg z \ : \ z \in  \Box \neg x\}$$

\noindent where $\Box \neg x= \bigcup\{ \Box y \ : \ y \in \neg x\}$ (see for instance~\cite{CFG}).

From the axiomatic point of view, we add to any Hilbert calculus for classical propositional logic  {\bf PC} over the signature $\{\neg, \to\}$ (assuming that {\em Modus Ponens}, \MP,  is the only inference rule)  the following axiom schemas: 

$$
\begin{array}{ll}
\axK & \Box(\alpha\to \beta) \to (\Box \alpha \to \Box \beta)\\[2mm]
\axKu & \Box(\alpha \to \beta) \to (\Diamond \alpha \to \Diamond \beta)\\[2mm]
\axKd & \Diamond(\alpha \to \beta) \to (\Box \alpha \to \Diamond \beta)\\[2mm]
\axMu & \neg \Diamond \alpha \to \Box(\alpha \to \beta)\\[2mm]
\axMd & \Box \beta  \to \Box(\alpha \to \beta)\\[2mm]
\axMt & \Diamond \beta \to \Diamond (\alpha \to \beta)\\[2mm]
\axMc & \Diamond \neg \alpha \to \Diamond(\alpha \to \beta)\\[2mm] 
\axT & \Box \alpha \to \alpha\\[2mm]
\axDNu & \Box \alpha \to \Box \neg\neg \alpha\\[2mm]
\axDNd & \Box \neg\neg \alpha \to \Box \alpha\\[2mm]
\end{array}$$

The Hilbert calculus composed by these axiom schemas (and having \MP\ as the only inference rule) is sound and complete with respect to the four-valued Nmatrix presented above, and it is called {\bf Tm} (see~\cite{con:cer:per:15,con:cer:per:17}).

Let now assume the principle that what is necessary is also necessarily necessary. In order to do this, and keeping the multioperators for negation and  implication, we slightly modify the multioperator for $\Box$ (and hence the one for $\Diamond$) as follows:

\begin{displaymath}
\begin{array}{|l|c|c|}
   	   \hline  	   & \Box 			& \Diamond\\
\hline \hline  T^+ & \{T^+\}		& +\\
 		\hline C^+ & - 				& +\\
 		\hline C^- & - 				& +\\
 		\hline F^- & - 				& \{F^-\} \\
 		\hline	
\end{array}
\end{displaymath}

\

From the axiomatic point of view, we add the axiom schema:
$$
\begin{array}{ll}
\axfour & \Box \alpha \to \Box \Box \alpha
\end{array}$$

The system obtained is {\bf T4m}, and it is sound and complete w.r.t. the corresponding four-valued Nmatrix (see~\cite{con:cer:per:15,con:cer:per:17}).

We could assume, reciprocally, that what is not necessary is impossible to be necessary. That forces us to assume  deterministic operators for the modal connectives:

\begin{displaymath}
\begin{array}{|l|c|c|}
   	   \hline  	   & \Box 			& \Diamond\\
\hline \hline  T^+ & \{T^+\}		& \{T^+\}\\
 		\hline C^+ & \{F^-\} 		& \{T^+\}\\
 		\hline C^- & \{F^-\} 		& \{T^+\}\\
 		\hline F^- & \{F^-\} 		& \{F^-\} \\
 		\hline	
\end{array}
\end{displaymath}

Consider now the system obtained from {\bf T4m} by adding the following axiom schema:
$$
\begin{array}{ll}
\axfive & \Diamond \Box \alpha \to \Box \alpha
\end{array}$$

The resulting system, called {\bf T45m}, is  sound and complete w.r.t. the corresponding four-valued Nmatrix (see~\cite{con:cer:per:15,con:cer:per:17}).\footnote{The systems {\bf Tm} and {\bf T45m} are originally called $Sa^+$ and $Sb^+$, respectively. See \cite{ivl:88}.}

\section{Interpreting modal operators in a deontic context} \label{deon}

We might ask what would happen with those Nmatrices if we abandon the principle that what is  \emph{a priori} true  is actually true. From an axiomatic point of view, we can weaken \axT, replacing it by the following axiom schema:
$$
\begin{array}{ll}
\axD & \Box \alpha \to \Diamond \alpha
\end{array}$$

This system was called {\bf Dm} in~\cite{con:cer:per:15,con:cer:per:17}. From a semantic point of view, we will have six truth-values, as discussed in Section~\ref{values}.

In a deontic scenario, it is quite natural to expect the same behavior that tautologies had in a alethic scenario. That is, all propositional tautologies will always receive a designated value. In this setting, the designed values are:

\begin{itemize}
  \item[] \  $+ = \{T^+,C^+, F^+\}$ (designated values)
  \item[] \  $- = \{T^-,C^-, F^-\}$ (non-designated values)
\end{itemize}

By reasoning as in the last section,  the following deterministic operator for  negation should be accepted:

\begin{displaymath}
\begin{array}{|l|l|}
   	   \hline  	   & \neg \\
\hline \hline  T^+ & \{T^-\} \\
 		\hline C^+ & \{C^-\} \\
 		\hline F^+ & \{F^-\} \\
 		\hline T^- & \{F^+\} \\
 		\hline C^- & \{C^+\} \\
 		\hline F^- & \{T^+\} \\
 		\hline	
\end{array}
\end{displaymath}

Clearly, if the sentence

\begin{center}
(S$_{12}$) Drive safely
\end{center}
is obliged, thus ``Do not drive safely'' will be forbidden and so on. The multioperator for implication is not quite obvious, as it can be seen below:

\begin{displaymath}
\begin{array}{|c|c|c|c|c|c|c|}
      \hline \to 	& T^+ 	  & C^+ 			& F^+ 		& T^- 		& C^- 			& F^-\\
\hline \hline     		T^+ & \{T^+\} & \{C^+\} 		& \{F^+\} 	& \{T^-\}	& \{C^-\} 		& \{F^-\}\\
       \hline     		C^+ & \{T^+\} & \{T^+,C^+\} 	& \{C^+\} 	& \{T^-\} 	& \{T^-,C^-\} 	& \{C^-\}\\
       \hline     		F^+ & \{T^+\} & \{T^+\} 		& \{T^+\} 	& \{T^-\} 	& \{T^-\} 		& \{T^-\}\\
      \hline     		T^- & \{T^+\} & \{C^+\} 		& \{F^+\} 	& \{T^+\} 	& \{C^+\} 		& \{F^+\}\\
       \hline     		C^- & \{T^+\} & \{T^+,C^+\}		& \{C^+\} 	& \{T^+\} 	& \{T^+,C^+\} 	& \{C^+\}\\
       \hline     		F^- & \{T^+\} & \{T^+\} 		& \{T^+\} 	& \{T^+\} 	& \{T^+\} 		& \{T^+\}\\
       \hline
\end{array}
\end{displaymath}

\

However, we have three arguments in order to defend the relevance of that multioperator. First, from a semantic point of view, by considering the Nmatrix formed by the  negation operator and the multioperator for $\to$ introduced above, together with some quite natural multioperators for $\Box$ and $\Diamond$ (to be introduced below), were $+$ is the set of designated values, we obtain a sound and  complete Nmatrix deontic semantic for {\bf Dm}. Secondly, from an axiomatic point of view, the difference between {\bf Dm} and {\bf Tm} is analogous to the one between the Kripkean systems {\bf T} and {\bf D}, in which the relation between the possible worlds is reflexive and serial, respectively.\footnote{The system {\bf T} is characterized by the axiom (T), the necessitation rule and the axiom (K). The completenes result for {\bf T} was first proved by Kripke in \cite{kri:63}. Axiomatics for {\bf D} was proposed by Lemmon in \cite{lem:66}, where he called it {\bf T(D)}. We follow Segerberg's nomenclature in \cite[p. 47]{seg:71}. Kripke did not originally proove {\bf D}-completeness with respect to his semantics, this result is for the first time proved in \cite[p. 47-50]{seg:71} .\label{D}} Third, eliminating the new values $T^-$ and $F^+$, we have exactly the Nmatrix for {\bf Tm}. As we shall see in Section~\ref{DAT}, this implies that the multialgebra underlying the Nmatrix  for {\bf Tm} is a submultialgebra of that for {\bf Dm}. 

Somebody could ask the linguistic justification in order to add a new case of non-determinism: when an antecedent is permitted and true and the consequence is permitted and false. For that, consider the sentences

$$
\begin{array}{ll}
\mbox{(S$_{13}$)} & \mbox{John takes Peter's sheep.}\\[2mm]
\mbox{(S$_{14}$)} &  \mbox{John smokes a cigarette.}\\[2mm]
\mbox{(S$_{15}$)} & \mbox{John pays for Peter's sheep.}
\end{array}
$$
All those sentences are clearly permitted. Suppose that John smokes a cigarette, takes Peter's sheep but doesn't pay him for it. It is clear that 
``(S$_{14}$) implies (S$_{15}$)'' is permitted while ``(S$_{13}$) implies (S$_{15}$)'' is an infraction.  Therefore, it seems reasonable that the multioperator $\rightarrow$ attributes in a deontic context to the ordered pair of values $\langle C^+, C^- \rangle$ the set of value $\{T^-,C^-\}$. 

Two extension of {\bf Dm} were considered in~\cite{con:cer:per:15,con:cer:per:17}. If we force that obligations are always obligatory,  we add the schema axiom \axfour\ to {\bf Dm}, obtaining the system {\bf D4m}. Reciprocally, if we accept too that what is not obligatory is not permitted being obligatory, we add the schema axiom \axfive\  to {\bf D4m}, obtaining {\bf D45m}.

Analogously to the alethic context, the semantic difference between {\bf Dm}, {\bf D4m} and {\bf D45m} is only with respect to modal operators $\Box$ and $\Diamond$:

\begin{displaymath}
\begin{array}{|l|c|c|c|c|c|c|}
		\hline     & \multicolumn{2}{|l|} {\bf Dm} & \multicolumn{2}{|l|}  {\bf D4m} 	&  \multicolumn{2}{|l|}   {\bf D45m} 		\\
   	   \hline  	   & \Box 	& \Diamond	&  \Box 	& \Diamond  & \Box 		& \Diamond \\
\hline \hline  T^+ & + 		& +			&\{T^+\}	& + 	& \{T^+\}	& \{T^+\}\\
 		\hline C^+ & - 		& + 		& - 		& + 	& \{F^-\} 	& \{T^+\}\\
		\hline F^+ & - 		& -			& - 		& \{F^-\} 	& \{F^-\}	& \{F^-\} \\		
		\hline T^- & +		& + 		& \{T^+\}		& +	& \{F^-\}	& \{T^+\}\\
 		\hline C^- & - 		& +			& - 		& +			& \{F^-\} 	& \{T^+\}\\
 		\hline F^- & - 		& - 		& - 		& \{F^-\}  		& \{F^-\} 	& \{F^-\} \\
 		\hline	
\end{array}
\end{displaymath}

\

The resulting Nmatrices are sound and complete w.r.t. the corresponding Hilbert calculi (see~\cite{con:cer:per:15,con:cer:per:17}). 

\section{Some 8-valued modal systems} \label{K-compl}

The next natural step would be trying to extend the proposed analogy between  Kripke semantics and modal Nmatrices. In Kripke semantics, in order to characterize the system {\bf D}, the relation between the possible worlds must be serial; by eliminating the axiom \axD\ it is obtained the basic system {\bf K}, in which the relation between the possible worlds is empty. By recalling our analysis in Section~\ref{values} with respect to eight truth-values, and by considering the possibility of  adding to {\bf K} the axioms \axfour\ and \axfive, Ivlev-like non-normal modal systems can be  obtained, namely {\bf Km}, {\bf K4m} and {\bf K45m}, respectively. It could be natural to consider, in our framework, the following multioperators for negation and the modalities:\footnote{In~\cite[Section~5]{omo:sku:16} it was proposed another Ivlev-like non-normal modal sytem characterized by a different 8-valued modal Nmatrix, called {\bf K}. See more comments below.}

\begin{displaymath}
\begin{array}{|l|c|c|c|c|c|c|c|}
		\hline \multicolumn{2}{|l|}  { } 			&\multicolumn{2}{|l|} {\bf Km} & \multicolumn{2}{|l|}  {\bf K4m} 	&  \multicolumn{2}{|l|}   {\bf K45m} 		\\
   	   \hline  	   & \neg 		&\Box 	& \Diamond	&  \Box 		& \Diamond  	& \Box 			& \Diamond \\
\hline \hline  T^+ & \{F^-\} 	& +		& + 		& \{T^+, I^+\}	& +				& \{T^+, I^+\} 	& \{T^+, I^+\}	\\
 		\hline C^+ & \{C^-\} 	& -		& + 		& - 			& + 			& \{F^-, I^-\} 	& \{T^+, I^+\} \\
		\hline F^+ & \{T^-\} 	& -		& - 		& - 			& \{F^-, I^-\}	& \{F^-, I^-\} 	& \{F^-, I^-\}\\		
		\hline I^+ & \{I^-\} 	& +     & -			& \{T^+, I^+\}	& \{F^-, I^-\}				& \{T^+ I^+\}	& \{F^-, I^-\}\\		
		\hline T^- & \{F^+\}	& + 	& + 		& \{T^+, I^+\}	& +				& \{T^+, I^+\}	& \{T^+, I^+\}\\
 		\hline C^- & \{C^+\} 	& -		& + 		& -				& + 			& \{F^-, I^-\}	& \{T^+, I^+\}\\
 		\hline F^- & \{T^+\} 	& - 	& - 		& - 			& \{F^-, I^-\}	& \{F^-, I^-\}	& \{F^-, I^-\}\\
		\hline I^- & \{I^+\} 	& +     & -			& \{T^+, I^+\} 	& \{F^-, I^-\}	& \{T^+, I^+\} 	& \{F^-, I^-\}\\		
	 	\hline	
\end{array}
\end{displaymath}

\

From the axiomatic point of view, it is also expected that, by eliminating the axiom \axD\ in {\bf Km}, we can provide a sound and complete system coherent w.r.t. the multioperators for 
$\Box$ and $\Diamond$ above. This is not the case, at it will be shown below:

\begin{exam} {\em
Let $v$ be a valuation function  over the proposed Nmatrix for {\bf Km}, that is, in which the multioperators for $\neg$ and $\Box$ are as displayed above.
 Let {\bf S} be any modal system that extends {\bf PC} and derives the axioms \axMu\ and \axK. Suppose that $v(p) = \{I^+\}$ and $v(q) = \{C^+\}$ for two given propositional variables $p$ and $q$. Thus, by the $\neg$-multioperator, $v(\neg p) = \{I^-\}$. In addition, by the $\Box$-multioperator for {\bf Km}, $v(\Box \neg p) \in +$ and, by \axMu\ and the fact that {\bf S} extends {\bf PC}, we would have that $v(\Box(p \to q)) \in +$. On the other hand, by the $\Box$-multioperator for {\bf Km} once again, we  would have that $v(\Box p) \in +$ and 
$v(\Box q) \in -$. Since {\bf S} extends {\bf PC}, we would have that $v(\Box(p\to q) \to (\Box p \to \Box q)) \in -$, falsifying axiom \axK.}
\end{exam}

\

At this point, there are two possible ways to go: a syntactical (axiomatic) one and a semantical one. The axiomatic one is to abandon either \axK\ or \axMu\ or both in order to obtain completeness.\footnote{This was the way chosen by \cite{omo:sku:16} when defining their above mentioned 8-valued modal logic. The proposed system did not only abolish  axioms \axK\ and \axMu, but also axiom \axKu. It is worth noting that not only \axK\ but also \axKu\ and \axMu\ are valid  in any Kripke model; in particular, they hold in all models whose accessibility relation is empty.}  If the key idea is to propose a weaker version of Kripke's system {\bf K} without necessitation, it seems very weird that the only modal axiom of this system does not hold. Besides, both \axK\ and \axMu\ are quite natural: the first one says that if an implication is necessary and the antecedent is also necessary, the consequent will be necessary too; the second one says that if the antecedent of an implication is impossible, the implication  will be necessary. 

These are the reasons to withdraw from the axiomatic way (at least from the perspective of dropping axioms from {\bf K}), and trying the semantic one. In a relational model in which the accessibility relation is empty, all atomic formulas are necessary. By \axK\ we conclude that all implications will also be necessary, that is, for any formula $\alpha$ and $\beta$, the formulas $\Box(\alpha \to \beta)$ and  $\Box\neg (\alpha \to \beta)$ are both true in any world in those models. Analogous to Kripkean semantics, in our perspective the values $I^+$ and $I^-$ must ``infect'' all the propositional formulas, that means, they propagate by means of implication as can be verified by the Nmatrix below:

\begin{displaymath}
\begin{array}{|c|c|c|c|c|c|c|c|c|}
      \hline \to 	& T^+ 	& C^+ 		 	 & F^+ 	   & I^+ 	  & T^- 	& C^- 				& F^- 	  & I^-\\
\hline \hline     		T^+ & \{T^+\} & \{C^+\} 		 & \{F^+\} & \{I^+\} & \{T^-\} & \{C^-\} 			& \{F^-\} & \{I^-\}\\
       \hline     		C^+ & \{T^+\} & \{T^+,C^+\} 	 & \{C^+\} & \{I^+\} & \{T^-\} & \{T^-,C^-\} 		& \{C^-\} & \{I^-\}\\
       \hline     		F^+ & \{T^+\} & \{T^+\} 		 & \{T^+\} & \{I^+\} & \{T^-\} & \{T^-\} 			& \{T^-\} & \{I^-\}\\
		\hline			I^+ & \{I^+\} & \{I^+\} 		 & \{I^+\} & \{I^+\} & \{I^-\} & \{I^-\} 			& \{I^-\} & \{I^-\}\\    
       \hline     		T^- & \{T^+\} & \{C^+\} 		 & \{F^+\} & \{I^+\} & \{T^+\} & \{C^+\} 			& \{F^+\} & \{I^+\}\\
       \hline     		C^- & \{T^+\} & \{T^+,C^+\} 	 & \{C^+\} & \{I^+\} & \{T^+\} & \{T^+,C^+\} 		& \{C^+\} & \{I^+\}\\
       \hline     		F^- & \{T^+\} & \{T^+\} 		 & \{T^+\} & \{I^+\} & \{T^+\} & \{T^+\} 			& \{T^+\} & \{I^+\}\\
       \hline			I^- & \{I^+\} & \{I^+\} 		 & \{I^+\} & \{I^+\} & \{I^+\} & \{I^+\} 			& \{I^+\} & \{I^+\}\\    
       \hline
\end{array}
\end{displaymath}

\

Note that if we eliminate the values $I^+$ and $I^-$ we obtain exactly the Nmatrices for {\bf Dm}. Our task now is to weaken {\bf Dm}-axiomatic in order to obtain completeness. 

Given that {\bf Km} extends {\bf PC},  disjunction and conjunction can be defined as follows: $\alpha \vee \beta := \neg \alpha \to \beta$ and $\alpha \wedge \beta := \neg(\alpha \to \neg\beta)$. As a consequence of this, the corresponding multioperators are defined as follows:

\begin{displaymath}
\begin{array}{|c|c|c|c|c|c|c|c|c|}
      \hline \vee 	& T^+ 	& C^+ 		 	 & F^+ 	   & I^+ 	  & T^- 	& C^- 				& F^- 	  & I^-\\
\hline \hline
T^+ & \{T^+\} & \{T^+\} 		 & \{T^+\} & \{I^+\} & \{T^+\} & \{T^+\} 			& \{T^+\} & \{I^+\}\\
       \hline
C^+ & \{T^+\} & \{T^+,C^+\} 	 & \{C^+\} & \{I^+\} & \{T^+\} & \{T^+,C^+\} 		& \{C^+\} & \{I^+\}\\
       \hline
F^+ & \{T^+\} & \{C^+\} 		 & \{F^+\} & \{I^+\} & \{T^+\} & \{C^+\} 			& \{F^+\} & \{I^+\}\\
		\hline
I^+ & \{I^+\} & \{I^+\} 		 & \{I^+\} & \{I^+\} & \{I^+\} & \{I^+\} 			& \{I^+\} & \{I^+\}\\    
       \hline
T^- & \{T^+\} & \{T^+\} 		 & \{T^+\} & \{I^+\} & \{T^-\} & \{T^-\} 			& \{T^-\} & \{I^-\}\\
       \hline
C^- & \{T^+\} & \{T^+,C^+\} 	 & \{C^+\} & \{I^+\} & \{T^-\} & \{T^-,C^-\} 		& \{C^-\} & \{I^-\}\\
       \hline
F^- & \{T^+\} & \{C^+\} 		 & \{F^+\} & \{I^+\} & \{T^-\} & \{C^-\} 			& \{F^-\} & \{I^-\}\\
       \hline
I^- & \{I^+\} & \{I^+\} 		 & \{I^+\} & \{I^+\} & \{I^-\} & \{I^-\} 			& \{I^-\} & \{I^-\}\\    
       \hline
\end{array}
\end{displaymath}

\

\begin{displaymath}
\begin{array}{|c|c|c|c|c|c|c|c|c|}
      \hline \wedge 	& T^+ 	& C^+ 		 	 & F^+ 	   & I^+ 	  & T^- 	& C^- 				& F^- 	  & I^-\\
\hline \hline
T^+ & \{T^+\} & \{C^+\} 		 & \{F^+\} & \{I^+\} & \{T^-\} & \{C^-\} 			& \{F^-\} & \{I^-\}\\
       \hline
C^+ & \{C^+\} & \{F^+,C^+\} 	 & \{F^+\} & \{I^+\} & \{C^-\} & \{F^-,C^-\} 		& \{F^-\} & \{I^-\}\\
       \hline
F^+ & \{F^+\} & \{F^+\} 		 & \{F^+\} & \{I^+\} & \{F^-\} & \{F^-\} 			& \{F^-\} & \{I^-\}\\
		\hline
I^+ & \{I^+\} & \{I^+\} 		 & \{I^+\} & \{I^+\} & \{I^-\} & \{I^-\} 			& \{I^-\} & \{I^-\}\\    
       \hline
T^- & \{T^-\} & \{C^-\} 		 & \{F^-\} & \{I^-\} & \{T^-\} & \{C^-\} 			& \{F^-\} & \{I^-\}\\
       \hline
C^- & \{C^-\} & \{F^-,C^-\} 	 & \{F^-\} & \{I^-\} & \{C^-\} & \{F^-,C^-\} 		& \{C^-\} & \{I^-\}\\
       \hline
F^- & \{F^-\} & \{F^-\} 		 & \{F^-\} & \{I^-\} & \{F^-\} & \{F^-\} 			& \{F^-\} & \{I^-\}\\
       \hline
I^- & \{I^-\} & \{I^-\} 		 & \{I^-\} & \{I^-\} & \{I^-\} & \{I^-\} 			& \{I^-\} & \{I^-\}\\    
       \hline
\end{array}
\end{displaymath}

\

\

\begin{notation} \label{nota}
{\em
From now on,  the following notation for axiom  schemas will be adopted: if \textsf{(A)} is the name of an axiom schema, and $\alpha_1,\ldots,\alpha_n$ is the list of formula schemas involved in it (reading from left to right), then we will write $A(\alpha_1,\ldots,\alpha_n)$ to refer to the formula schema labelled by that axiom. By abuse of notation, if $\alpha_1,\ldots,\alpha_n$ are concrete formulas then we will also write $A(\alpha_1,\ldots,\alpha_n)$ to denote a concrete instance of the formula schema  $A(\alpha_1,\ldots,\alpha_n)$ (when there is no risk of confusion). For instance, given formula schemas $\alpha$ and $\beta$ then $K(\alpha,\beta)$ will denote the formula schema $\Box(\alpha \to \beta) \to (\Box \alpha \to \Box \beta)$ of \axK, while $M3(\beta,\alpha)$ will denote the formula schema $\Diamond \beta \to \Diamond (\alpha \to \beta)$ labelled by \axMt. Both formula schemas will denote concrete formulas whenever $\alpha$ and $\beta$ denote concrete formulas. }
\end{notation}

\

Now, consider the following axiom schemas:

$$
\begin{array}{ll}
\axKp & \Diamond  \alpha \to K(\alpha,\beta)\\[2mm]
\axKup & \Diamond \neg \beta \to K1(\alpha,\beta)\\[2mm]
\axKdp & \Diamond \alpha \to  K2(\alpha,\beta)\\[2mm]
\axMtp & (\Diamond \alpha \vee \Diamond \neg \alpha) \to M3(\beta,\alpha)\\[2mm]
\axMcp & \Diamond \neg \beta \to  M4(\alpha,\beta)\\[2mm]
\axIu & (\Box \alpha \wedge \Box \neg \alpha) \to (\Box(\alpha \to \beta) \wedge \Box 
					\neg(\alpha \to \beta))\\[2mm]
\axId & (\Box \beta \wedge \Box \neg \beta) \to (\Box(\alpha \to \beta) \wedge \Box 
					\neg(\alpha \to \beta))
\end{array}$$

The system {\bf Km} is obtained by adding to {\bf PC} the seven axiom schemas above, together with axiom schemas \axMu, \axMd, \axDNu\ and \axDNd.

As usual, if $\Gamma$ is a set of formulas and $\alpha$ is a formula,  $\Gamma \vdash_{\bf Km} \alpha$ means that there is a {\bf Km}-proof of $\alpha$ from formulas in $\Gamma$. Besides,  $\Gamma \vDash_{\bf Km} \alpha$ means that every {\bf Km}-valuation that simultaneously assigns to every formula in $\Gamma$ an element of $+$, will also assigns to $\alpha$ an element of $+$. This is the usual definition of consequence relation in Nmatrices (recall Definition~\ref{semNmat}).

\begin{theorem}[Soundness of {\bf Km}] \label{SoundKm}
Let $\Gamma$ be a set of formulas and let $\alpha$ be a formula. Then
$$\Gamma \vdash_{\bf Km} \alpha \textrm{ implies } \Gamma \vDash_{\bf Km} \alpha$$
\end{theorem}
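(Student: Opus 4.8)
The plan is a routine induction on the length of a {\bf Km}-derivation of $\alpha$ from $\Gamma$: it suffices to show that (i) the rule \MP\ preserves membership in $+$ (if $v(\beta)\in +$ and $v(\beta\to\gamma)\in +$ then $v(\gamma)\in +$), and (ii) every instance of every axiom schema of {\bf Km} takes a value in $+$ under every {\bf Km}-valuation $v$. Before touching the axioms I would record three facts read straight off the tables. (a) $\neg$ is deterministic, it is an involution ($v(\neg\neg\varphi)=v(\varphi)$), and it swaps $+$ and $-$. (b) From the $\to$-table, $v(\varphi\to\psi)\in +$ iff $v(\varphi)\in -$ or $v(\psi)\in +$; and the derived operations $\vee,\wedge$ behave classically on the partition $\{+,-\}$. (c) For every value $x$, $\Box(x)$ equals $+$ or equals $-$, and likewise $\Diamond(x)$; concretely $v(\Box\varphi)\in +$ iff $v(\varphi)\in\{T^+,I^+,T^-,I^-\}$, and $v(\Diamond\varphi)\in +$ iff $v(\varphi)\in\{T^+,C^+,T^-,C^-\}$. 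Fact (b) gives (i) at once, and, since the sign of a valuation is then a classical valuation compatible with $\neg,\to,\vee,\wedge$, every instance of a {\bf PC}-axiom schema lies in $+$.

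Every modal axiom schema of {\bf Km} has the shape $\varphi\to\psi$, so by (b) it suffices to show $v(\varphi)\in +$ forces $v(\psi)\in +$. For \axDNu\ and \axDNd\ this is immediate from (a) and (c): $v(\neg\neg\varphi)=v(\varphi)$, so $\Box\neg\neg\varphi$ and $\Box\varphi$ both draw their value from $\Box(v(\varphi))$, which is $+$ or $-$ as a whole. For \axMu, if $v(\neg\Diamond\varphi)\in +$ then $v(\varphi)\in\{F^+,I^+,F^-,I^-\}$, and inspecting those four rows of the $\to$-table shows $v(\varphi\to\psi)\in\{T^+,I^+,T^-,I^-\}$, whence $v(\Box(\varphi\to\psi))\in +$; \axMd\ is the dual check on the four columns with $v(\psi)\in\{T^+,I^+,T^-,I^-\}$.

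For the conditional ``\textsf{K}-type'' schemas \axKp, \axKup, \axKdp, \axMtp, \axMcp\ the guard is tailored so that once it holds the enclosed schema ($K(\alpha,\beta)$, $K1(\alpha,\beta)$, and so on) becomes designated. Typical case, \axKp: assuming $v(\Diamond\alpha)\in +$ gives $v(\alpha)\in\{T^+,C^+,T^-,C^-\}$; to show $K(\alpha,\beta)$ designated one may further assume $v(\Box(\alpha\to\beta))\in +$ and $v(\Box\alpha)\in +$, which by (c) forces $v(\alpha)\in\{T^+,T^-\}$; then, reading the two corresponding rows of the $\to$-table, $v(\beta)\notin\{T^+,I^+,T^-,I^-\}$ would place $v(\alpha\to\beta)$ outside $\{T^+,I^+,T^-,I^-\}$, contradicting $v(\Box(\alpha\to\beta))\in +$, so $v(\Box\beta)\in +$. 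The remaining four are analogous restrictions to one or two rows or columns, using (c) to translate ``$v(\Box\cdot)\in +$'' and ``$v(\Diamond\cdot)\in +$'' into membership conditions. For the ``infection'' schemas \axIu\ and \axId: if $v(\Box\alpha\wedge\Box\neg\alpha)\in +$ then $v(\alpha)\in\{T^+,I^+,T^-,I^-\}\cap\{F^+,I^+,F^-,I^-\}=\{I^+,I^-\}$, and the $I^+$- and $I^-$-rows of the $\to$-table give $v(\alpha\to\beta)\in\{I^+,I^-\}$; hence $v(\Box(\alpha\to\beta))\in +$, and since $\neg$ preserves $\{I^+,I^-\}$ also $v(\Box\neg(\alpha\to\beta))\in +$, so the consequent, a conjunction of designated formulas, is designated. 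The argument for \axId\ is the same with $\beta$ in place of $\alpha$.

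No conceptual difficulty arises; the whole argument is a finite verification. The only laborious part, and the step I expect to require the most care, is the case analysis for the conditional schemas \axKp--\axMcp, where at the outset all $8\times 8$ pairs $(v(\alpha),v(\beta))$ are in play. The two ``homogeneity'' facts in (c), together with the observation that a schema $\varphi\to\psi$ is automatically designated whenever $v(\varphi)\in -$, cut each such case down to a single relevant row or column of the $\to$-table, so the bookkeeping stays manageable.
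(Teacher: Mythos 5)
Your proposal is correct and follows essentially the same route as the paper's proof: validity of each axiom schema is checked against the Nmatrix using the facts that the sign of $\to$ is classical on the partition $\{+,-\}$, that $v(\Box\varphi)\in +$ iff $v(\varphi)\in T\cup I$ and $v(\Diamond\varphi)\in +$ iff $v(\varphi)\in T\cup C$, followed by induction on derivations with \MP\ preserving designation. The guarded cases you leave as ``analogous'' do go through exactly as in your treatment of \axKp\ (and your explicit checks of \axMu\ and \axMd, which the paper's case list leaves implicit, are a small bonus).
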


\begin{proof} We begin by proving that any instance of an axiom schema of {\bf Km} is valid in its Nmatrix. For propositional tautologies, just check that the Nmatrix for {\bf Km} respects the classical truth-tables of  the operators $\neg$ and $\to$, by considering $+$ as {\em true} and $-$ as {\em false}. Namely: 

\begin{displaymath}
\begin{array}{|c|c|}
      \hline  	& \neg\\
\hline \hline     		
+ & -\\
       \hline     		
- & +\\    
       \hline
\end{array}
\hspace{2cm}
\begin{array}{|c|c|c|}
      \hline \to 	& + & -\\
\hline \hline     		
+ & + & -\\
       \hline     		
- & + & +\\    
       \hline
\end{array}
\end{displaymath}

\

\noindent 
For the modal axioms:

\begin{description}
 	\item[(K')] Suppose, by absurd, that $v(K'(\alpha,\beta))\in -$ for some formulas $\alpha$ and $\beta$. Then $v(\Diamond \alpha) \in +$, $v(\Box \alpha) \in +$,  
 	$v(\Box(\alpha \to \beta)) \in +$ but $v(\Box\beta)\in-$. First, we have that
 	$v(\alpha) \in (T \cup C) \cap (T \cup I)$, hence $v(\alpha) \in T$. Besides, we have that $v(\alpha \to \beta) \in T \cup I$ since $v(\Box(\alpha \to \beta)) \in +$. But  $v(\Box\beta) \in -$ implies that  $v(\beta) \in C \cup F$ and so, by definition of the multioperator $\to$ in {\bf Km},  $v(\alpha \to \beta)  \in v(\alpha) \to v(\beta) \subseteq  C \cup F$, a contradiction.
 	\item[(K1')] Suppose now that $v(K1'(\beta,\alpha))\in -$ for some formulas $\alpha$ and $\beta$. Hence, $v(\Diamond \neg \beta) \in +$, 
 	$v(\Box(\alpha \to \beta)) \in +$, $v(\Diamond \alpha) \in +$ but $v(\Diamond \beta) \in -$. From $v(\Diamond \neg \beta) \in +$ it follows that $v(\neg\beta) \in T \cup C$, hence  $v(\beta) \in F \cup C$. But $v(\Diamond \beta) \in -$, then $v(\beta) \in F\cup I$. Thus, $v(\beta) \in F$. In addition, $v(\alpha \to \beta) \in T \cup I$ and $v(\alpha) \in T \cup C$. But, by the definition of $\to$, we have that $v(\alpha \to \beta) \in v(\alpha) \to v(\beta) \subseteq C \cup F$, a contradiction.
 	\item[(K2')] Consider, by absurd, that $v(\Diamond \alpha) \in +$, $v(\Diamond (\alpha \to \beta)) \in +$,
 	$v(\Box\alpha) \in +$ but $v( \Diamond \beta) \in -$. As it was proven in the case for \axK, it follows that $v(\alpha) \in T$. Since $v(\Diamond(\alpha \to \beta))\in +$, we have 
that $v(\alpha \to \beta) \in T \cup C$. From $v(\Diamond \beta) \in -$ it follows that $v(\beta) \in F \cup I$. Then, by the definition of $\to$, $v(\alpha \to \beta) \in F \cup I$, a contradiction.
 	\item[(M3')] Suppose that $v(M3'(\alpha,\beta))\in -$ for some formulas $\alpha$ and $\beta$. That is, $v(\Diamond \alpha \vee \Diamond \neg \alpha) \in +$ but $v(M3(\beta,\alpha)) \in -$. From the latter, we infer that $v(\Diamond \beta) \in +$ and so $v(\beta) \in T \cup C$; however, we also conclude that $v(\Diamond (\alpha \to \beta)) \in -$, hence
 	$v(\alpha \to \beta) \in F \cup I$. From this and the definition of $\to$ we get that $v(\alpha) \in I$.  But this implies that both $v(\Diamond \alpha) \in -$ and $v(\Diamond \neg \alpha) \in -$, and thus $v(\Diamond \alpha \vee \Diamond \neg \alpha) \in -$, a contradicion.
 	\item[(M4')] Suppose now that $v(M4'(\beta,\alpha))\in -$ for some formulas $\alpha$ and $\beta$. Thus, $v(\Diamond \neg \beta) \in +$, $v(\Diamond \neg \alpha) \in +$ but 
 	$v(\Diamond ( \alpha \to \beta)) \in -$. As it was proven in the previous cases, it follows that  $v(\beta) \in T \cup C$ and $v(\alpha) \in T \cup C$, but 
 	$v(\alpha \to \beta) \subseteq F \cup I$.  However, using the definition of $\to$ it also follows that $v(\alpha \to \beta)  \in v(\alpha) \to v(\beta) \subseteq  T \cup C$, a contradiction.
 	\item[(I1)] Just note that if $v(\Box \alpha \wedge \Box \neg \alpha) \in +$ then both $v(\Box \alpha) \in +$ and $v(\Box \neg \alpha) \in +$. Hence $v(\alpha) \in I$. From this, $v(\alpha \to \beta) \in I$, therefore $v(\Box (\alpha \to \beta) \wedge \Box \neg (\alpha \to \beta)) \in +$.
 	\item[(I2)] If $v(\Box \beta \wedge \Box \neg \beta) \in +$ then, reasoning as in the case of~\axIu, $v(\beta) \in I$. The rest of the proof is also analogous to the last case.
 	\item[(DN1)-(DN2)] $v(\Box \alpha) \in +$ iff $v(\alpha) \in T\cup I$ iff $v(\neg \neg \alpha) \in T \cup I$ iff $v(\Box \neg \neg \alpha) \in +$. 
 \end{description}

In addition, by observing the truth-table for $\to$ over $\{+,-\}$ displayed above, it follows that \MP\ preserves trueness, namely: if $v(\alpha) \in +$ and $v(\alpha \to \beta) \in +$ then $v(\beta) \in +$, for every valuation $v$ over the Nmatrix of {\bf Km}.

Now, assume that $\Gamma \vdash_{\bf Km} \alpha$ and let $v$ be a valuation over the Nmatrix of {\bf Km} such that $v(\gamma) \in +$ for every $\gamma \in \Gamma$. Taking into account that any axiom of {\bf Km} is valid and that \MP\ preserves trueness,  it can be proven by induction on the length of a derivation in {\bf Km} of   $\alpha$ from $\Gamma$ that $v(\alpha) \in +$. Therefore $\Gamma \vDash_{\bf Km}\alpha$. 
\end{proof}

\
\begin{theorem}[Deduction metatheorem (DMT)] \

	Let $\Gamma$ be a set of formulas and let both $\alpha$ and $\beta$ be
	formulas. Thus:
	
	$$\Gamma, \alpha \vdash_{\bf Km} \beta \textrm{ iff } \Gamma \vdash_{\bf Km} \alpha \to \beta$$
\end{theorem}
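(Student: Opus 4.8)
The plan is to prove the non-trivial implication from left to right by induction on the length of a {\bf Km}-derivation, exploiting the crucial fact that {\bf Km} extends {\bf PC} and that \MP\ is its \emph{only} inference rule. The converse implication is immediate: if $\Gamma \vdash_{\bf Km} \alpha \to \beta$ then, by monotonicity of $\vdash_{\bf Km}$, $\Gamma, \alpha \vdash_{\bf Km} \alpha \to \beta$; since also $\Gamma, \alpha \vdash_{\bf Km} \alpha$, one application of \MP\ yields $\Gamma, \alpha \vdash_{\bf Km} \beta$.

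For the forward direction, suppose $\beta_1,\ldots,\beta_n = \beta$ is a {\bf Km}-derivation of $\beta$ from $\Gamma \cup \{\alpha\}$. I would show, by induction on $i \leq n$, that $\Gamma \vdash_{\bf Km} \alpha \to \beta_i$, distinguishing three cases for each $\beta_i$. If $\beta_i$ is an axiom of {\bf Km} (a {\bf PC} axiom or one of the seven new schemas together with \axMu, \axMd, \axDNu, \axDNd) or a member of $\Gamma$, then $\Gamma \vdash_{\bf Km} \beta_i$, and since $\beta_i \to (\alpha \to \beta_i)$ is an instance of a {\bf PC} axiom, \MP\ gives $\Gamma \vdash_{\bf Km} \alpha \to \beta_i$. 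If $\beta_i$ is $\alpha$ itself, then $\Gamma \vdash_{\bf Km} \alpha \to \beta_i$ because $\alpha \to \alpha$ is a {\bf PC}-theorem. Finally, if $\beta_i$ is obtained by \MP\ from $\beta_j$ and $\beta_k = \beta_j \to \beta_i$ with $j,k < i$, then the induction hypothesis gives $\Gamma \vdash_{\bf Km} \alpha \to \beta_j$ and $\Gamma \vdash_{\bf Km} \alpha \to (\beta_j \to \beta_i)$; combining these with the {\bf PC} axiom $(\alpha \to (\beta_j \to \beta_i)) \to ((\alpha \to \beta_j) \to (\alpha \to \beta_i))$ and two applications of \MP\ yields $\Gamma \vdash_{\bf Km} \alpha \to \beta_i$. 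Taking $i = n$ completes the argument.

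The proof is entirely routine, and the point I would stress is precisely \emph{why} nothing goes wrong: the modal schemas are handled uniformly in the ``axiom'' case, so they never interact with the inductive step, and --- unlike in the normal modal logics $\mathbf{K}$, $\mathbf{D}$, $\mathbf{T}$ --- there is no necessitation rule that could break the induction (the necessitation case is exactly where the naive DMT fails for those systems). Thus the only mild obstacle is bookkeeping: checking that the fixed Hilbert calculus for {\bf PC} really does prove $\alpha \to \alpha$ and contains (or derives) the schemas $\gamma \to (\alpha \to \gamma)$ and $(\alpha \to (\gamma \to \delta)) \to ((\alpha \to \gamma) \to (\alpha \to \delta))$ used above; this is standard and independent of the modal apparatus.
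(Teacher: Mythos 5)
Your proof is correct and is essentially the same argument the paper invokes: the paper simply cites the standard Mendelson-style induction, noting that it applies because {\bf Km} is an axiomatic extension of {\bf PC} with \MP\ as the only inference rule, which is exactly the induction (and the exact reason) you spell out in full.
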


\begin{proof}
Analogous to \cite[p. 28]{men:10}, since {\bf Km} is an axiomatic extension of {\bf PC}, hence \MP\ is the only inference rule.
\end{proof}

\begin{definition}
Consider the following relation $\equiv$  defined on $For$: $\alpha \equiv \beta$ if and only if both $\alpha \to \beta$ and $\beta \to \alpha$ are derivable in {\bf Km}.
\end{definition}

It is easy to see that $\equiv$ is an equivalence relation. It is not a congruence since the connective $\Box$ does not preserve logical equivalences, as observed by Omori and Skurt in~\cite[Observation~72]{omo:sku:16} for the stronger system {\bf Tm} (called {\bf T} by the authors).

\begin{lemma} \label{A}  
Let $\alpha$ be a formula in $For$. Then, the following holds in {\bf Km} (recallling that $\Diamond \alpha$ is a notation for the formula $\neg\Box\neg\alpha$):
\begin{enumerate}[(i)]
\item $\Diamond\neg\alpha \equiv \neg \Box \alpha$;
\item $\Box\alpha \equiv \neg\Diamond\neg  \alpha$;
\item $\Box\neg\alpha \equiv \neg \Diamond \alpha$;
\item $\Diamond\alpha \equiv \Diamond\neg\neg \alpha$.
\end{enumerate}
\end{lemma}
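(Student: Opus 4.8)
The plan is to prove each of the four equivalences by exhibiting, for each direction, a short \textbf{Km}-derivation and then invoking the definition of $\equiv$. The crucial observation is that $\Diamond\gamma$ is literally an abbreviation for $\neg\Box\neg\gamma$, so several of these claims are almost definitional once we unwind notation and push through double-negation reasoning available in \textbf{PC}. Throughout I would freely use the DMT and the fact that \textbf{Km} contains all of \textbf{PC}, so any classical tautology (in particular all double-negation equivalences $\gamma \leftrightarrow \neg\neg\gamma$ and contraposition schemas) is available, together with \MP.

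For (i), $\Diamond\neg\alpha$ unfolds to $\neg\Box\neg\neg\alpha$. So I must show $\neg\Box\neg\neg\alpha \equiv \neg\Box\alpha$, i.e.\ derive both $\neg\Box\neg\neg\alpha \to \neg\Box\alpha$ and its converse. By contraposition (a \textbf{PC} move) this reduces to deriving $\Box\alpha \to \Box\neg\neg\alpha$ and $\Box\neg\neg\alpha \to \Box\alpha$, which are exactly the axiom schemas \axDNu\ and \axDNd. This is the key place where \axDNu/\axDNd\ are used, and it is what makes (i) go through despite $\Box$ not respecting equivalences in general. For (ii): $\neg\Diamond\neg\alpha$ unfolds to $\neg\neg\Box\neg\neg\alpha$; by double negation in \textbf{PC} this is \textbf{Km}-equivalent to $\Box\neg\neg\alpha$, which by \axDNu/\axDNd\ is equivalent to $\Box\alpha$; chaining the equivalences (transitivity of $\equiv$) gives $\Box\alpha \equiv \neg\Diamond\neg\alpha$. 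For (iii): $\neg\Diamond\alpha$ unfolds to $\neg\neg\Box\neg\alpha$, which by double negation is equivalent to $\Box\neg\alpha$; that is (iii) directly, and here no \axDNu/\axDNd\ is even needed. For (iv): $\Diamond\alpha$ unfolds to $\neg\Box\neg\alpha$ and $\Diamond\neg\neg\alpha$ unfolds to $\neg\Box\neg\neg\neg\alpha$; by contraposition it suffices to show $\Box\neg\alpha \equiv \Box\neg\neg\neg\alpha$, and instantiating \axDNu\ and \axDNd\ with $\neg\alpha$ in place of $\alpha$ gives exactly $\Box\neg\alpha \to \Box\neg\neg\neg\alpha$ and the converse.

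I would organize the write-up so that (iii) is proved first (purely by \textbf{PC} double-negation reasoning), then (i) and (iv) (each an application of \axDNu\ and \axDNd\ under a contraposition, with (iv) using the substitution instance at $\neg\alpha$), and finally (ii) by composing the equivalence $\neg\Diamond\neg\alpha \equiv \Box\neg\neg\alpha$ (double negation) with $\Box\neg\neg\alpha \equiv \Box\alpha$ (from \axDNu/\axDNd). A small lemma worth stating once up front, or just using silently, is that $\equiv$ is preserved under prefixing $\neg$ and under the classical connectives already handled in \textbf{PC}, so that contraposition lifts an equivalence $\varphi \equiv \psi$ to $\neg\varphi \equiv \neg\psi$; this is immediate from \textbf{PC} plus \MP.

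I do not expect any serious obstacle here: the only subtlety is bookkeeping the unfolding of the $\Diamond$ abbreviation correctly (getting the right number of negations — e.g.\ $\Diamond\neg\neg\alpha = \neg\Box\neg\neg\neg\alpha$, three negations inside), and remembering that the \emph{only} non-\textbf{PC} ingredient needed is the pair \axDNu, \axDNd, applied either to $\alpha$ or to $\neg\alpha$. The ``hard part,'' such as it is, is purely presentational: making clear why ordinary double-negation reasoning is legitimate inside the scope of $\Box$ here (it is \emph{not} via congruence, which fails, but via the explicit axioms \axDNu/\axDNd), so that the reader is not misled into thinking we are using a congruence property that the paper has just disclaimed.
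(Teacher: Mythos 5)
Your proof is correct, and it is essentially the argument the paper has in mind: the paper simply refers to Lemma~B of its earlier companion paper, whose content is exactly this unfolding of the $\Diamond$ abbreviation plus \textbf{PC} contraposition/double-negation together with \axDNu\ and \axDNd\ (applied at $\alpha$ or at $\neg\alpha$). Your explicit remark that one is not using a congruence property of $\Box$ but only the outer-level \textbf{PC} moves and the two DN axioms is exactly the right point of care, and the negation bookkeeping in (i)--(iv) checks out.
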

\begin{proof}
Analogous to~\cite[Lemma~B]{con:cer:per:17}. 
\end{proof}

\begin{lemma} \label{B}
Let $\Gamma \cup \{\alpha,\beta\}$ be formulas of formulas in $For$. Then, the following holds: 
\begin{enumerate}[(i)]
	\item $\Box \alpha, \Diamond \alpha, \Diamond \neg \beta \vdash_{\bf Km} \Diamond \neg (\alpha \to \beta)$
	\item $\Diamond \alpha, \Box \neg \beta, \Diamond \neg \beta \vdash \Diamond \neg (\alpha \to \beta)$
	\item $\Box \alpha, \Diamond \alpha, \Box \neg \beta \vdash_{\bf Km} \Box \neg(\alpha \to \beta)$
	\item $\Diamond \alpha, \Diamond \beta \vdash_{\bf Km} \Diamond(\alpha \to \beta)$
	\item  $\Diamond \neg \alpha, \Diamond \beta \vdash_{\bf Km} \Diamond(\alpha \to \beta)$
	\item $\Diamond \neg \alpha, \Diamond \neg \beta \vdash_{\bf Km} \Diamond(\alpha \to \beta)$
\end{enumerate}
\end{lemma}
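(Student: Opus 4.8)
The plan is to dispatch items (iv)--(vi) as direct propositional consequences of the ``primed'' axioms, and to obtain items (i)--(iii) by \emph{reductio}, using the Deduction Metatheorem (DMT) together with the equivalences of Lemma~\ref{A}.

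Items (iv)--(vi) are immediate. For (iv), from $\Diamond\alpha$ infer $\Diamond\alpha\vee\Diamond\neg\alpha$ by disjunction introduction in \textbf{PC}; \MP\ on the instance $(\Diamond\alpha\vee\Diamond\neg\alpha)\to M3(\beta,\alpha)$ of \axMtp\ gives $\Diamond\beta\to\Diamond(\alpha\to\beta)$, and a last \MP\ with $\Diamond\beta$ yields $\Diamond(\alpha\to\beta)$. Item (v) is the same, obtaining $\Diamond\alpha\vee\Diamond\neg\alpha$ from $\Diamond\neg\alpha$. For (vi), the instance $\Diamond\neg\beta\to M4(\alpha,\beta)$ of \axMcp\ unfolds (via Notation~\ref{nota}) to $\Diamond\neg\beta\to(\Diamond\neg\alpha\to\Diamond(\alpha\to\beta))$, so two applications of \MP\ with $\Diamond\neg\beta$ and $\Diamond\neg\alpha$ finish the job.

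For (i)--(iii) I would first rewrite the conclusion as a negated modality: by Lemma~\ref{A}(i), $\neg\Box(\alpha\to\beta)$ implies $\Diamond\neg(\alpha\to\beta)$, and by Lemma~\ref{A}(iii), $\neg\Diamond(\alpha\to\beta)$ implies $\Box\neg(\alpha\to\beta)$. So, by the DMT and classical \emph{reductio}, it suffices to add the complementary modal claim about $\alpha\to\beta$ to the hypotheses --- $\Box(\alpha\to\beta)$ in (i) and (ii), $\Diamond(\alpha\to\beta)$ in (iii) --- and derive a contradiction. In (i): feed $\Diamond\alpha$ into \axKp\ and apply \MP\ successively with $\Box(\alpha\to\beta)$ and $\Box\alpha$ to get $\Box\beta$; by Lemma~\ref{A}(ii) this yields $\neg\Diamond\neg\beta$, contradicting the hypothesis $\Diamond\neg\beta$. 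In (ii): feed $\Diamond\neg\beta$ into \axKup\ and apply \MP\ with $\Box(\alpha\to\beta)$ and $\Diamond\alpha$ to get $\Diamond\beta$; by Lemma~\ref{A}(iii), $\Box\neg\beta$ gives $\neg\Diamond\beta$, a contradiction. In (iii): feed $\Diamond\alpha$ into \axKdp\ and apply \MP\ with $\Diamond(\alpha\to\beta)$ and $\Box\alpha$ to get $\Diamond\beta$, again contradicted by $\Box\neg\beta$ through Lemma~\ref{A}(iii).

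None of this is deep, so the main ``obstacle'' is bookkeeping: unwinding Notation~\ref{nota} to see exactly what each primed axiom abbreviates, and tracking which hypothesis fills which antecedent. The one genuinely substantive choice is in (i): since $\Diamond\beta$ and $\Diamond\neg\beta$ are jointly satisfiable (both hold when $v(\beta)\in C$), one cannot use \axKup\ there; the hypothesis $\Box\alpha$ is present precisely so that \axKp\ delivers the stronger $\Box\beta$, which is what actually conflicts with $\Diamond\neg\beta$. All appeals to the DMT and to purely propositional manipulations are legitimate because \textbf{Km} is an axiomatic extension of \textbf{PC} with \MP\ as its sole inference rule.
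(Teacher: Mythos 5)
Your proposal is correct and follows essentially the same route as the paper, which proves each item from the corresponding primed axiom (\axKp, \axKup, \axKdp\ for (i)--(iii); \axMtp, \axMcp\ for (iv)--(vi)) together with {\bf PC} reasoning, the Deduction Metatheorem, and the equivalences of Lemma~\ref{A}; you have merely spelled out the \emph{reductio} bookkeeping the paper leaves implicit (using A(ii) where A(i) would serve equally well in item (i)).
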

\begin{proof} \ \\[1mm]
(i)  By \axKp, {\bf PC}, (DMT) and Lemma~\ref{A}~(i).\\[1mm]
(ii)  By \axKup, {\bf PC}, (DMT) and Lemma~\ref{A}~(i) and~(iii).\\[1mm]
(iii) By \axKdp, {\bf PC}, (DMT) and Lemma~\ref{A}~(iii).\\[1mm]
(iv) By \axMtp, {\bf PC} and (DMT).\\[1mm]
(v) Analogous to (iv).\\[1mm]
(vi) By \axMcp, {\bf PC} and (DMT).
\end{proof}

\

In order to prove completeness, some standard definitions and results form propositional logic will be recalled now  (see, for instance, \cite{woj:84}). 

A logic $\mathcal{L}$ defined over a $\mathbb{L}$ with a
consequence relation $\vdash_{\mathcal{L}}$ is said to be  {\em Tarskian} if it satisfies
the following, for every $\Gamma \cup \Delta \cup
\{\alpha\}\subseteq \mathbb{L}$:\\[2mm]
\indent
(1) \ if $\alpha \in \Gamma$ then $\Gamma
\vdash_{\mathcal{L}} \alpha$;\\[1mm]
\indent
(2) \ if $\Gamma \vdash_{\mathcal{L}} \alpha$
and $\Gamma \subseteq \Delta$ then $\Delta \vdash_{\mathcal{L}} \alpha$;\\[1mm]
\indent
(3) \ if $\Delta \vdash_{\mathcal{L}} \alpha$ and
$\Gamma \vdash_{\mathcal{L}} \beta$ for every $\beta \in \Delta$ then $\Gamma
\vdash_{\mathcal{L}} \alpha$.\\

A Tarskian logic $\mathcal{L}$ is  {\em finitary} if it satisfies the following property: 
\\[2mm]
\indent (4) \ if $\Gamma \vdash_{\mathcal{L}} \alpha$ then there
exists  a finite subset $\Gamma_0$ of $\Gamma$ such that $\Gamma_0
\vdash_{\mathcal{L}} \alpha$.\\

Given a Tarskian logic  $\mathcal{L}$ over $\mathbb{L}$, let $\Gamma\cup\{\varphi\} \subseteq \mathbb{L}$. Then $\Gamma$ is {\em maximal non-trivial  w.r.t. $\varphi$} (or {\em $\varphi$-saturated}) in $\mathcal{L}$ if $\Gamma \nvdash_{\mathcal{L}} \varphi$ but $\Gamma, \psi \vdash_{\mathcal{L}} \varphi$ for any
        $\psi\not\in\Gamma$.

It is easy to see that if $\Gamma$ is $\varphi$-saturated then it is closed, that is, the following holds for every formula $\psi$: $\Gamma \vdash_{\mathcal{L}} \psi$ iff $\psi\in\Gamma$.

\begin{theorem}[Lindenbaum-\L os]\label{L-A-lemma}
Let $\mathcal{L}$ be a Tarskian and finitary logic over the language
$\mathbb{L}$. Let $\Gamma\cup \{\varphi\} \subseteq \mathbb{L}$ be
such that $\Gamma\nvdash_{\mathcal{L}} \varphi$.  There exists then  a set
$\Delta$ such that $\Gamma \subseteq \Delta \subseteq
\mathbb{L}$ and $\Delta$ is
$\varphi$-saturated in $\mathcal{L}$.
\end{theorem}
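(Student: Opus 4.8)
The plan is to obtain $\Delta$ as a maximal element of a suitable poset of ``good'' extensions of $\Gamma$, via Zorn's Lemma. Concretely, I would let $\mathbb{X}$ be the collection of all sets $\Sigma$ with $\Gamma \subseteq \Sigma \subseteq \mathbb{L}$ and $\Sigma \nvdash_{\mathcal{L}} \varphi$, partially ordered by set inclusion. By the hypothesis $\Gamma \nvdash_{\mathcal{L}} \varphi$ we have $\Gamma \in \mathbb{X}$, so the poset is nonempty.

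The key step is to verify that every nonempty chain $\mathcal{C} \subseteq \mathbb{X}$ has an upper bound in $\mathbb{X}$, the natural candidate being $\Sigma^* = \bigcup \mathcal{C}$. It is immediate that $\Gamma \subseteq \Sigma^* \subseteq \mathbb{L}$ and that $\Sigma^*$ dominates $\mathcal{C}$; the only nontrivial point is that $\Sigma^* \nvdash_{\mathcal{L}} \varphi$. This is exactly where finitariness (property (4)) is indispensable: if $\Sigma^* \vdash_{\mathcal{L}} \varphi$, then there is a finite $\Sigma_0 \subseteq \Sigma^*$ with $\Sigma_0 \vdash_{\mathcal{L}} \varphi$; since $\mathcal{C}$ is a chain and $\Sigma_0$ is finite, $\Sigma_0$ is contained in a single member $\Sigma \in \mathcal{C}$, and then monotonicity (property (2)) yields $\Sigma \vdash_{\mathcal{L}} \varphi$, contradicting $\Sigma \in \mathbb{X}$. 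Hence $\Sigma^* \in \mathbb{X}$.

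By Zorn's Lemma, $\mathbb{X}$ has a maximal element $\Delta$, which satisfies $\Gamma \subseteq \Delta \subseteq \mathbb{L}$ and $\Delta \nvdash_{\mathcal{L}} \varphi$ by membership in $\mathbb{X}$. To see $\Delta$ is $\varphi$-saturated, take any $\psi \notin \Delta$. Then $\Delta \cup \{\psi\}$ properly contains $\Delta$, so by maximality $\Delta \cup \{\psi\} \notin \mathbb{X}$; as $\Gamma \subseteq \Delta \cup \{\psi\} \subseteq \mathbb{L}$ holds trivially, the only remaining possibility is $\Delta, \psi \vdash_{\mathcal{L}} \varphi$, which is precisely the saturation requirement.

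I would remark that, alternatively, since $For(\Sigma)$ is denumerable one can dispense with Zorn: enumerate $\mathbb{L} = \{\psi_n : n \geq 0\}$, put $\Delta_0 = \Gamma$, and recursively set $\Delta_{n+1} = \Delta_n \cup \{\psi_n\}$ when the right-hand side still does not derive $\varphi$ and $\Delta_{n+1} = \Delta_n$ otherwise; then $\Delta = \bigcup_n \Delta_n$ works, and the proof that $\Delta \nvdash_{\mathcal{L}} \varphi$ again rests on finitariness applied to a hypothetical finite derivation. In either route the only genuine subtlety is this passage to the limit using property (4); the rest is routine bookkeeping with the Tarskian conditions (1)--(3).
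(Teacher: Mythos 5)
Your proof is correct. The paper itself does not prove this theorem but simply cites W\'ojcicki and Carnielli--Coniglio, and your Zorn's-Lemma argument --- with the chain-union step secured by finitariness together with monotonicity, and the maximality-to-saturation step at the end --- is precisely the standard Lindenbaum--\L os argument found in those references; your remark that a denumerable enumeration can replace Zorn (relevant here, since $For(\Sigma)$ is denumerable) is also accurate.
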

\begin{proof}
See~\cite[Theorem~22.2]{woj:84}. Another proof can be found in~\cite[Theorem~2.2.6]{CC16}.
\end{proof}

\

Since {\bf Km} is Tarskian and finitary (because it is defined by means of a Hilbert calculus with \MP\ as the only inference rule, which is finitary) then the last theorem holds for {\bf Km}.
 
\begin{definition} \label{Km-can}
  Let $\Delta$ be a set of formulas which is $\varphi$-saturated in {\bf Km}. The canonical valuation associated to $\Delta$ is the  function 
	$\mathbb{V}_\Delta: For \longrightarrow \{T^+,C^+,F^+,I^+,T^-,C^-,F^-,I^-\}$
	defined as follows:
  \begin{enumerate}
    \item $\mathbb{V}_\Delta(\alpha) \in {+}$ \ iff \ $\alpha \in \Delta$
    \item $\mathbb{V}_\Delta(\alpha) \in {-}$ \ iff \ $\neg \alpha \in \Delta$
    \item $\mathbb{V}_\Delta(\alpha) \in T$ \ iff \ $\Box \alpha, \Diamond \alpha \in \Delta$
    \item $\mathbb{V}_\Delta(\alpha) \in C$ \ iff \ $\Diamond \alpha, \Diamond \neg \alpha  \in \Delta$
    \item $\mathbb{V}_\Delta(\alpha) \in F$ \ iff \ $\Box\neg  \alpha, \Diamond \neg \alpha \in \Delta$.
    \item $\mathbb{V}_\Delta(\alpha) \in I$ \ iff \ $\Box  \alpha, \Box \neg \alpha \in \Delta$.    
  \end{enumerate}
\end{definition}

\

Given that $\Delta$ is $\varphi$-saturated in {\bf Km} then, for any formula $\alpha$, exactly one formula in the set $\{\alpha, \neg\alpha\}$ belongs to $\Delta$. This shows, taking into account Lemma~\ref{A}, that $\mathbb{V}_\Delta$ is a well-defined function.

\begin{lemma} \ \label{Km-matrix}
     Let $\Delta$ be a set of formulas which is $\varphi$-saturated in {\bf Km} for some formula $\varphi$, and let $\mathbb{V}_\Delta$ be the canonical valuation as described
     in Definition~\ref{Km-can}. Then, $\mathbb{V}_\Delta$ is a {\bf Km}-valuation.
\end{lemma}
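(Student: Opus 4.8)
The plan is to verify that the function $\mathbb{V}_\Delta$ satisfies the defining clause of an Nmatrix valuation (Definition~\ref{valNmat}) for each of the three primitive connectives $\neg$, $\to$ and $\Box$ of the signature. The starting point is a reformulation of Definition~\ref{Km-can}. Since $\Delta$ is $\varphi$-saturated it is closed, and exactly one of $\gamma,\neg\gamma$ lies in $\Delta$ for every formula $\gamma$; moreover, by \axDNu, \axDNd\ and Lemma~\ref{A}, the formula $\Diamond\gamma$ (resp.\ $\Diamond\neg\gamma$) belongs to $\Delta$ precisely when $\Box\neg\gamma$ (resp.\ $\Box\gamma$) does not. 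Hence the ``letter'' of $\mathbb{V}_\Delta(\gamma)$ is entirely fixed by the pair of conditions ``$\Box\gamma\in\Delta$?'' and ``$\Box\neg\gamma\in\Delta$?'': it is $T$ if only the first holds, $F$ if only the second, $I$ if both, $C$ if neither; and the ``sign'' is ${+}$ iff $\gamma\in\Delta$. (This is also what makes $\mathbb{V}_\Delta$ well defined, as already observed before the statement.)

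First I would dispose of the two easy connectives. For $\neg$: the sign of $\mathbb{V}_\Delta(\neg\alpha)$ is the opposite of that of $\mathbb{V}_\Delta(\alpha)$ because exactly one of $\alpha,\neg\alpha$ lies in $\Delta$; and, using \axDNu\ and \axDNd\ to identify $\Box\neg\neg\alpha$ with $\Box\alpha$ inside $\Delta$, the pair of conditions controlling the letter of $\mathbb{V}_\Delta(\neg\alpha)$ is the pair controlling the letter of $\mathbb{V}_\Delta(\alpha)$ with its two components interchanged; this is exactly the deterministic negation table (in which $T\leftrightarrow F$, $C\mapsto C$, $I\mapsto I$, and the superscript flips). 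For $\Box$: we have $\mathbb{V}_\Delta(\Box\alpha)\in{+}$ iff $\Box\alpha\in\Delta$ iff the letter of $\mathbb{V}_\Delta(\alpha)$ is $T$ or $I$, while in the {\bf Km}-table the value of $\Box$ is the whole set ${+}$ when the input letter is $T$ or $I$ and the whole set ${-}$ when it is $C$ or $F$, irrespective of the superscript; so $\mathbb{V}_\Delta(\Box\alpha)$ always lies in the prescribed set.

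The substantial part is the multioperator for $\to$, which I would treat by a case analysis on the letters $a,b$ of $\mathbb{V}_\Delta(\alpha),\mathbb{V}_\Delta(\beta)$. The superscript never causes trouble: because $\Delta$ is closed under {\bf PC} and $\to$ is classical, $\alpha\to\beta\in\Delta$ iff $\alpha\notin\Delta$ or $\beta\in\Delta$, which matches the classical behaviour of the $\to$-table on superscripts (the $\{+,-\}$-reduct displayed in the proof of Theorem~\ref{SoundKm}). So it suffices, in each case, to pin down the letter of $\mathbb{V}_\Delta(\alpha\to\beta)$, i.e.\ to decide whether $\Box(\alpha\to\beta)$ and $\Box\neg(\alpha\to\beta)$ belong to $\Delta$. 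If $a=I$, then $\Box\alpha,\Box\neg\alpha\in\Delta$ and \axIu\ puts both $\Box(\alpha\to\beta)$ and $\Box\neg(\alpha\to\beta)$ in $\Delta$, so the letter is $I$, as rows $I^{\pm}$ of the table demand; $b=I$ is handled symmetrically by \axId. For the nine cases with $a,b\in\{T,C,F\}$ I would combine \axMu\ (equivalently $\Box\neg\alpha\to\Box(\alpha\to\beta)$, via Lemma~\ref{A}), \axMd\ ($\Box\beta\to\Box(\alpha\to\beta)$) and the six clauses of Lemma~\ref{B}: membership of $\Box\beta$ or of $\Box\neg\alpha$ forces $\Box(\alpha\to\beta)\in\Delta$; Lemma~\ref{B}(iii) forces $\Box\neg(\alpha\to\beta)\in\Delta$; and---decisively for ruling out the letters $F$ or $I$ whenever the relevant cell of the table omits them---Lemma~\ref{B}(iv)--(vi) force $\Diamond(\alpha\to\beta)\in\Delta$ (so $\Box\neg(\alpha\to\beta)\notin\Delta$) and Lemma~\ref{B}(i)--(ii) force $\Diamond\neg(\alpha\to\beta)\in\Delta$ (so $\Box(\alpha\to\beta)\notin\Delta$). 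For example $(a,b)=(T,T)$ uses \axMd\ and Lemma~\ref{B}(iv), yielding letter $T$; $(T,C)$ uses Lemma~\ref{B}(iv) and~\ref{B}(i), yielding $C$; $(T,F)$ uses Lemma~\ref{B}(iii) and~\ref{B}(i), yielding $F$; $(C,F)$ uses Lemma~\ref{B}(ii) and~\ref{B}(vi), yielding $C$; $(F,F)$ uses \axMu\ and Lemma~\ref{B}(vi), yielding $T$; and similarly in the remaining cases. In each case the resulting value sits inside the corresponding table entry, noting that the only non-singleton entries (e.g.\ $C^+\to C^+=\{T^+,C^+\}$) merely require the letter to be $T$ or $C$, which ``$\Box\neg(\alpha\to\beta)\notin\Delta$'' already guarantees.

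The main obstacle is the bookkeeping of this last case analysis: matching, for each of the nine letter-combinations (and each admissible superscript pattern), the right selection of clauses of Lemma~\ref{B} and of \axMu, \axMd\ to the right cell of the $\to$-table, and checking in particular that whenever a cell's value excludes $F$ and/or $I$ the corresponding $\Diamond$-formula is indeed produced by Lemma~\ref{B}. Once the reformulation of Definition~\ref{Km-can} in terms of $\Box$-membership and the superscript-matching observation are in place, no individual step is hard, and the cases not involving the letter $I$ run exactly as in the six-valued setting of~\cite{con:cer:per:17}.
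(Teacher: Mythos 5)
Your proposal is correct and takes essentially the same route as the paper's own proof: a case analysis on the main connective and on the modal ``letters'' of the arguments, using \axDNu/\axDNd\ with Lemma~\ref{A} for negation, \axIu/\axId\ for the $I$-cases, and \axMu, \axMd\ together with Lemma~\ref{B}(i)--(vi) to pin down the letter of an implication, exactly as in the paper's CASE 1--CASE 3. Your repackaging of Definition~\ref{Km-can} in terms of the two membership questions $\Box\gamma\in\Delta$, $\Box\neg\gamma\in\Delta$, and your explicit classical treatment of the sign, are only presentational streamlinings (plus harmless swaps such as using Lemma~\ref{B}(i) where the paper uses \ref{B}(ii)) of the same argument.
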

\begin{proof} \

\begin{itemize}

\item[] {\bf CASE 1}: $\alpha$ is $\neg \beta$.

\begin{enumerate}[(i)]
	\item	If $\mathbb{V}_\Delta(\beta) \in T$, then $\Box\beta, \Diamond \beta \in \Delta$.
     		By~\axDNu\ it follows that $\Box \neg \neg \beta \in \Delta$ and by Lemma~\ref{A}~(iv), it follows
     		that $\Diamond \neg \neg \beta \in \Delta$. Thus, $\mathbb{V}_\Delta(\neg\beta) \in F$.
	\item	If $\mathbb{V}_\Delta(\beta) \in C$ then $\Diamond \beta, \Diamond \neg \beta \in \Delta$. 
			By Lemma~\ref{A}~(iv), we obtain $\Diamond \neg \neg \beta$. Thus, $\mathbb{V}_\Delta(\neg\beta) \in C$.
	\item 	If $\mathbb{V}_\Delta(\beta) \in F$ then $\Box\neg \beta, \Diamond \neg \beta \in
     		\Delta$. Hence $\mathbb{V}_\Delta(\neg\beta) \in T$.
    \item 	If $\mathbb{V}_\Delta(\beta) \in I$, then $\Box \beta, \Box \neg \beta \in \Delta$.
			By~\axDNu\ we have $\Box \neg \neg \beta \in \Delta$. Therefore,~$\mathbb{V}_\Delta(\neg\beta)\in~I$. 
 	\item  If $\mathbb{V}_\Delta(\beta) \in {+}$ then $\beta \in \Delta$.
     		By {\bf PC}, we have $\neg \neg \beta \in \Delta$. Thus,
     		$\mathbb{V}_\Delta(\neg\beta) \in {-}$.
	\item	If $\mathbb{V}_\Delta(\beta) \in {-}$, then $\neg\beta \in \Delta$. 
			Therefore, $\mathbb{V}_\Delta(\neg\beta) \in {+}$.
\end{enumerate} 

\item[] {\bf CASE 2}: $\alpha$ is $\Box \beta$.

\begin{enumerate}[(i)]
	\item	If $\mathbb{V}_\Delta(\beta) \in T$, then $\Box \beta, \Diamond \beta \in \Delta$
			and $\mathbb{V}_\Delta(\Box\beta) \in {+}$.
	\item	If $\mathbb{V}_\Delta(\beta) \in C$, then $\Diamond \beta, \Diamond \neg \beta \in
     		\Delta$. By Lemma~\ref{A}~(i), $\neg\Box \beta \in \Delta$ and
     		$\mathbb{V}_\Delta(\Box \beta) \in {-}$.
	\item 	If $\mathbb{V}_\Delta(\beta) \in F$, then $\Box \neg \beta, \Diamond \neg \beta \in \Delta$.
			As above, we have $\neg\Box \beta \in \Delta$ and
     		$\mathbb{V}_\Delta(\Box \beta) \in {-}$.
    \item If  $\mathbb{V}_\Delta(\beta) \in I$, then $\Box \beta, \Box \neg \beta \in \Delta$ and
    		$\mathbb{V}_\Delta(\Box \beta) \in {+}$.
\end{enumerate}
\end{itemize}
\newpage
\begin{itemize}
\item[] {\bf CASE 3}: $\alpha$ is $\beta \to \gamma$.

\begin{enumerate}[(i)]
	\item 	Consider that $\mathbb{V}_\Delta(\beta) \in I$. Thus, $\Box \beta, \Box \neg \beta \in \Delta$.
			Hence, by \axIu\  and {\bf PC}, we obtain $\Box (\beta \to \gamma),
			\Box \neg (\beta \to\gamma) \in \Delta$. Therefore, 
			$\mathbb{V}_\Delta(\beta \to \gamma) \in I$.
	\item 	Consider now that $\mathbb{V}_\Delta(\gamma) \in I$. Then, $\Box \gamma, \Box \neg \gamma \in \Delta$.
			Hence, by \axId\  and {\bf PC}, we have $\Box (\beta \to \gamma),
			\Box \neg (\beta \to \gamma) \in \Delta$. Thus, 
			$\mathbb{V}_\Delta(\beta \to \gamma)\in~I$.		
	\item 	If $\mathbb{V}_\Delta(\beta) \in T$ and $\mathbb{V}_\Delta(\gamma) \in T$, then
		  	$\Box \beta, \Diamond \beta, \Box \gamma, \Diamond \gamma \in \Delta$. Thus,
		  	by Lemma~\ref{B}~(iv), $\Diamond(\beta \to \gamma) \in \Delta$. Furthermore,
		  	by \axMd, $\Box(\beta \to \gamma) \in \Delta$. Therefore,
		  	$\mathbb{V}_\Delta(\beta \to \gamma) \in T$
	\item 	Suppose that $\mathbb{V}_\Delta(\beta) \in T$ and $\mathbb{V}_\Delta(\gamma) \in C$. Hence,
			$\Box \beta, \Diamond \beta, \Diamond \gamma, \Diamond \neg \gamma \in \Delta$. Thus, 
			by Lemma~\ref{B}~(iv), $\Diamond(\beta \to \gamma) \in \Delta$. Besides, by
			Lemma~\ref{B}~(i), $\Diamond \neg (\beta \to \gamma) \in \Delta$. Therefore,
			$\mathbb{V}_\Delta(\beta \to \gamma) \in C$
	\item  	Suppose now that $\mathbb{V}_\Delta(\beta) \in T$ and $\mathbb{V}_\Delta(\gamma) \in F$. Hence,
			$\Box \beta, \Diamond \beta,\Box \neg \gamma,\Diamond \neg \gamma \in \Delta$. Thus,
			by Lemma~\ref{B}~(ii), $\Diamond \neg (\beta \to \gamma)$. Furthermore, by Lemma~\ref{B}~(iii), we have
			$\Box \neg (\beta \to \gamma)$. Therefore, 
			$\mathbb{V}_\Delta(\beta \to \gamma) \in F$.
	\item   Let $\mathbb{V}_\Delta(\beta) \in C$ and $\mathbb{V}_\Delta(\gamma) \in T$. Hence, 
			$\Diamond \beta, \Diamond \neg \beta, \Box \gamma, \Diamond\gamma \in \Delta$. Thus, by \axMd, we 
			obtain $\Box (\beta \to \gamma) \in \Delta$. Besides, by Lemma~\ref{B}~(iv), we have
			$\Diamond (\beta \to \gamma) \in \Delta$. Therefore, 
			$\mathbb{V}_\Delta(\beta \to \gamma) \in T$.
	\item	Let now $\mathbb{V}_\Delta(\beta) \in C$ and $\mathbb{V}_\Delta(\gamma) \in C$. Thus, 
			$\Diamond \beta, \Diamond \neg \beta, \Diamond \gamma, \Diamond \neg \gamma \in \Delta$.
			Hence, by Lemma~\ref{B}~(iv), $\Diamond (\beta \to \gamma) \in \Delta$. Therefore,
			$\mathbb{V}_\Delta(\beta \to \gamma) \in T$ or 
			$\mathbb{V}_\Delta(\beta \to \gamma) \in C$.
	\item 	Suppose now that $\mathbb{V}_\Delta(\beta) \in C$ and $\mathbb{V}_\Delta(\gamma) \in F$. Hence, 
			$\Diamond \beta, \Diamond \neg \beta, \Box \neg \gamma, \Diamond \neg \gamma \in \Delta$.
			Thus, by Lemma~\ref{B}~(vi), we have $\Diamond (\beta \to \gamma) \in \Delta$. And by
			Lemma~\ref{B}~(ii), we have $\Diamond \neg (\beta \to \gamma)$. Therefore,
			$\mathbb{V}_\Delta(\beta \to \gamma) \in C$.
	\item 	Consider now $\mathbb{V}_\Delta(\beta) \in F$ and $\mathbb{V}_\Delta(\gamma) \in T$. Thus,
			$\Box \neg \beta, \Diamond \neg \beta, \Box \gamma, \Diamond \gamma \in \Delta$.
			On the one hand, by \axMd\ we have $\Box(\beta \to \gamma) \in \Delta$ . On the other
			hand,  we obtain $\Diamond(\beta \to \gamma) \in \Delta$, by Lemma~\ref{B}~(v).
			Therefore, $\mathbb{V}_\Delta(\beta \to \gamma) \in T$.
	\item   Let also $\mathbb{V}_\Delta(\beta) \in F$ and $\mathbb{V}_\Delta(\gamma) \in C$. Thus,
			$\Box \neg \beta, \Diamond \neg \beta, \Diamond \gamma, \Diamond \neg \gamma \in \Delta$.
			By Lemma~\ref{A}~(iii) and \axMu, we have $\Box(\beta \to \gamma) \in \Delta$. Furthermore,
			by Lemma~\ref{B}~(vi), we obtain $\Diamond(\beta \to \gamma) \in \Delta$. Therefore,
			$\mathbb{V}_\Delta(\beta \to \gamma) \in T$.
	\item	Finally, let $\mathbb{V}_\Delta(\beta) \in F$ and $\mathbb{V}_\Delta(\gamma) \in F$. Hence,
			$\Box \beta, \Diamond \neg \beta, \Box \neg \gamma, \Diamond \neg \gamma \in \Delta$.
			As above, by Lemma~\ref{A}~(iii), \axMu\ and Lemma~\ref{B}~(vi), we conclude that
			$\mathbb{V}_\Delta(\beta \to \gamma) \in T$.
	   
\end{enumerate} 
\end{itemize}
\end{proof}

\begin{corollary} \label{Km-true}\
Let $\Delta$ be a set of formulas which is $\varphi$-saturated in {\bf Km} for some formula $\varphi$. Then, the funcion $\mathbb{V}_\Delta$
is a  {\bf Km}-valuation such that, for every formula $\alpha$:
$$\mathbb{V}_\Delta(\alpha) \in + \ \textrm{ iff } \ \alpha \in \Delta.$$
\end{corollary}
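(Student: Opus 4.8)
The statement splits into two assertions, and the plan is simply to assemble pieces that are already in place. The first assertion, that $\mathbb{V}_\Delta$ is a {\bf Km}-valuation, is exactly Lemma~\ref{Km-matrix}: its proof checks, by a case distinction on the principal connective of a compound formula, that $\mathbb{V}_\Delta(c(\beta_1,\dots,\beta_n)) \in c(\mathbb{V}_\Delta(\beta_1),\dots,\mathbb{V}_\Delta(\beta_n))$ for $c \in \{\neg, \Box, \to\}$, and since $\Diamond$ is only an abbreviation for $\neg\Box\neg$, those three cases exhaust the signature. So for this half I would simply invoke Lemma~\ref{Km-matrix}; no further work is needed.

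The second assertion, the equivalence ``$\mathbb{V}_\Delta(\alpha) \in +$ iff $\alpha \in \Delta$'', is literally clause~(1) of Definition~\ref{Km-can}. The only thing that genuinely has to be argued — and it is the observation already recorded right after that definition — is that the six defining clauses really do determine a single value for each $\alpha$, so that clause~(1) is a meaningful stipulation rather than an overdetermined or underdetermined one. I would make that explicit as follows. Since $\Delta$ is $\varphi$-saturated it is closed and, by (DMT) together with proof-by-cases in {\bf PC}, for every $\alpha$ exactly one of $\alpha$, $\neg\alpha$ lies in $\Delta$; this settles the designated/undesignated dichotomy of clauses~(1)--(2). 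Next, by Lemma~\ref{A}(i) and~(iii) the formulas $\Box\alpha$ and $\Diamond\neg\alpha$ are complementary in $\Delta$, and likewise $\Box\neg\alpha$ and $\Diamond\alpha$; hence the four ``modal zones'' $T$, $C$, $F$, $I$ of clauses~(3)--(6) correspond exactly to the four Boolean combinations of ``$\Box\alpha \in \Delta$'' and ``$\Box\neg\alpha \in \Delta$'', so they are pairwise disjoint and jointly exhaustive. Combining the two dichotomies pins $\mathbb{V}_\Delta(\alpha)$ down to exactly one of the eight values, and clause~(1) then reads off the desired biconditional.

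In short, I do not expect a real obstacle here: the result is a corollary in the strict sense, obtained by combining Lemma~\ref{Km-matrix} with clause~(1) of Definition~\ref{Km-can}. The one point deserving a sentence of care is the well-definedness of $\mathbb{V}_\Delta$, and that reduces, as indicated, to the saturation properties of $\Delta$ together with Lemma~\ref{A}.
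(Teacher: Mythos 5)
Your proposal is correct and follows exactly the paper's own route: the paper proves this corollary by citing clause~(1) of Definition~\ref{Km-can} for the biconditional and Lemma~\ref{Km-matrix} for $\mathbb{V}_\Delta$ being a {\bf Km}-valuation, with the well-definedness of $\mathbb{V}_\Delta$ handled in the remark immediately after Definition~\ref{Km-can} via $\varphi$-saturation and Lemma~\ref{A}, just as you spell out. Your explicit elaboration of that well-definedness argument is a harmless (and accurate) expansion of what the paper states more briefly.
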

\begin{proof}
By Definition~\ref{Km-can}, $\mathbb{V}_\Delta$
is a function such that $\mathbb{V}_\Delta(\alpha) \in +$ iff
$\alpha \in \Delta$. By Lemma~\ref{Km-matrix}, $\mathbb{V}_\Delta$
is a {\bf Km}-valuation. 
\end{proof}

\

\begin{theorem} [Completeness of {\bf Km}] \label{Km-compl} Let $\Gamma \cup\{\alpha\}$ be a set of formulas in $For$. Then:
$$\Gamma\vDash_{\bf Km} \alpha \ \mbox{ implies } \ \Gamma\vdash_{\bf Km} \alpha.$$
\end{theorem}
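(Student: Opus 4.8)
The plan is to prove completeness via the standard Lindenbaum--Łoś argument, using the canonical valuation machinery already set up in Definition~\ref{Km-can}, Lemma~\ref{Km-matrix} and Corollary~\ref{Km-true}. First I would argue by contraposition: assume $\Gamma \nvdash_{\bf Km} \alpha$. Since {\bf Km} is Tarskian and finitary (it is a Hilbert calculus with \MP\ as its only rule), Theorem~\ref{L-A-lemma} applies, so there is a set $\Delta$ with $\Gamma \subseteq \Delta \subseteq For$ that is $\alpha$-saturated in {\bf Km}. In particular $\Delta$ is closed under $\vdash_{\bf Km}$, so $\alpha \notin \Delta$ while every $\gamma \in \Gamma$ satisfies $\gamma \in \Delta$.

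Next I would invoke the canonical valuation $\mathbb{V}_\Delta$ attached to this $\Delta$. By Corollary~\ref{Km-true}, $\mathbb{V}_\Delta$ is a genuine {\bf Km}-valuation (i.e. it respects all the multioperators of the Nmatrix, which is the content of Lemma~\ref{Km-matrix}), and it has the crucial property that $\mathbb{V}_\Delta(\beta) \in +$ iff $\beta \in \Delta$, for every formula $\beta$. Applying this equivalence: since each $\gamma \in \Gamma$ lies in $\Delta$, we get $\mathbb{V}_\Delta(\gamma) \in +$ for all $\gamma \in \Gamma$; and since $\alpha \notin \Delta$, we get $\mathbb{V}_\Delta(\alpha) \notin +$. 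Thus $\mathbb{V}_\Delta$ is a {\bf Km}-valuation that designates all of $\Gamma$ but not $\alpha$, which by Definition~\ref{semNmat} witnesses $\Gamma \nvDash_{\bf Km} \alpha$. Contraposition then yields $\Gamma \vDash_{\bf Km} \alpha \Rightarrow \Gamma \vdash_{\bf Km} \alpha$, as desired.

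Essentially all the real work has already been discharged in the preceding lemmas, so at this level the proof is a short assembly. The one genuine obstacle — verifying that the canonical valuation actually satisfies the non-deterministic truth-conditions for $\neg$, $\Box$ and $\to$ on all eight values — is precisely Lemma~\ref{Km-matrix}, which in turn leans on the modal axioms \axKp, \axKup, \axKdp, \axMtp, \axMcp, \axIu, \axId, \axMu, \axMd, \axDNu, \axDNd\ (packaged through Lemmas~\ref{A} and~\ref{B}). Since we are allowed to assume those results, the only care needed here is to make sure the hypotheses of Theorem~\ref{L-A-lemma} are met (Tarskian + finitary, already noted in the text right after that theorem) and that the correspondence "$\in +$ iff $\in \Delta$" is used in both directions. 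I would therefore write the proof compactly, pointing to Theorem~\ref{L-A-lemma} for saturation, to Corollary~\ref{Km-true} for the valuation and its characterising property, and to Definition~\ref{semNmat} for the reading of $\nvDash_{\bf Km}$, and conclude by contraposition.
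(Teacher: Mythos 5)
Your proposal is correct and follows essentially the same route as the paper: contraposition, Lindenbaum--Łoś (Theorem~\ref{L-A-lemma}) to obtain an $\alpha$-saturated $\Delta \supseteq \Gamma$, and then Corollary~\ref{Km-true} to read off a {\bf Km}-valuation designating $\Gamma$ but not $\alpha$. Nothing is missing; the heavy lifting is indeed in Lemma~\ref{Km-matrix}, which you are entitled to cite.
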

\begin{proof}
Suppose that $\Gamma\nvdash_{\bf Km} \alpha$.  By Theorem~\ref{L-A-lemma} there exists a set $\Delta$ of formulas such that $\Gamma \subseteq \Delta$ and $\Delta$ is
$\alpha$-saturated in {\bf Km}. Then, $\alpha \not\in \Delta$. By Corollary~\ref{Km-true}, $\mathbb{V}_\Delta$ is a {\bf Km}-valuation such that $\mathbb{V}_\Delta(\gamma) \in +$ for every $\gamma \in \Gamma$, but $\mathbb{V}_\Delta(\alpha) \not\in +$. Therefore $\Gamma \nvDash_{\bf Km} \alpha$.
\end{proof}

\

\begin{corollary} [Compactness of Nmatrix semantics for {\bf Km}] \label{Km-scompl} 
Let $\Gamma \cup\{\alpha\}$ be a set of formulas in $For$. If $\Gamma \vDash_{\bf Km} \alpha$ then there exists  a finite subset $\Gamma_0$ of $\Gamma$ such that $\Gamma_0
\vDash_{\bf Km} \alpha$.
\end{corollary}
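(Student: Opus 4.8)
The plan is to derive compactness as an immediate consequence of the syntactic characterization of $\vDash_{\bf Km}$ obtained from Theorems~\ref{SoundKm} and~\ref{Km-compl}, combined with the fact that the Hilbert calculus for {\bf Km} is finitary. In other words, the semantic consequence relation coincides with a Tarskian finitary deductive consequence, and finitary consequence relations are automatically compact in the stated sense.

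Concretely, I would first assume $\Gamma \vDash_{\bf Km} \alpha$. By Completeness of {\bf Km} (Theorem~\ref{Km-compl}), this gives $\Gamma \vdash_{\bf Km} \alpha$. Next, since {\bf Km} is defined by a Hilbert calculus whose only inference rule is \MP, any {\bf Km}-derivation of $\alpha$ from $\Gamma$ is a finite sequence of formulas, and hence uses only finitely many premises from $\Gamma$; collecting those premises yields a finite subset $\Gamma_0 \subseteq \Gamma$ with $\Gamma_0 \vdash_{\bf Km} \alpha$ (this is precisely property~(4) of finitariness, which was already noted to hold for {\bf Km} just before Definition~\ref{Km-can}). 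Finally, apply Soundness of {\bf Km} (Theorem~\ref{SoundKm}) to $\Gamma_0 \vdash_{\bf Km} \alpha$ to conclude $\Gamma_0 \vDash_{\bf Km} \alpha$, which is the desired statement.

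There is really no hard part here: the only thing to be careful about is that the argument is not circular, i.e.\ that Soundness and Completeness for {\bf Km} were established without already invoking compactness of the Nmatrix semantics. Inspecting the proofs, Soundness is a direct induction on derivations and Completeness goes through the Lindenbaum--{\L}o\'s construction (Theorem~\ref{L-A-lemma}) and the canonical valuation (Definition~\ref{Km-can}, Lemma~\ref{Km-matrix}, Corollary~\ref{Km-true}), none of which presupposes compactness. Hence the chain $\Gamma \vDash_{\bf Km} \alpha \Rightarrow \Gamma \vdash_{\bf Km} \alpha \Rightarrow \Gamma_0 \vdash_{\bf Km} \alpha \Rightarrow \Gamma_0 \vDash_{\bf Km} \alpha$ is legitimate, and the proof is complete.
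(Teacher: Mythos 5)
Your argument is correct and is exactly the paper's proof: completeness turns the semantic hypothesis into $\Gamma \vdash_{\bf Km} \alpha$, finitariness of the Hilbert calculus extracts a finite $\Gamma_0$, and soundness converts back to $\Gamma_0 \vDash_{\bf Km} \alpha$. The extra remark about non-circularity is a sensible sanity check but adds nothing beyond what the paper's one-line proof already relies on.
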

\begin{proof}
It is a consequence of Theorems~\ref{Km-compl}  and~\ref{SoundKm}, and the fact that $\vdash_{\bf Km}$ is finitary.
\end{proof}

\

Let {\bf K4m} be the system obtained by adding to {\bf Km} the schema axiom \axfour. It is very easy to check that this system is sound and complete with respect to {\bf K4m}-Nmatrices. For soundness, we just observe that if $v(\Box \alpha) \in +$ then $v(\Box \alpha) \in \{T^+, I^+\}$ and thus $v(\Box\Box \alpha) \in +$. For completeness, note that: (i) if $\mathbb{V}_\Delta(\beta) \in T$, then $\Box \beta, \Diamond \beta \in \Delta$ and then, by \axfour, $\Box \Box \beta \in \Delta$
and  $\mathbb{V}_\Delta(\Box \beta) \in \{T^+,I^+\}$; if (ii) $\mathbb{V}_\Delta(\beta) \in I$, then $\Box \beta, \Box \neg \beta \in \Delta$ and, once again by \axfour, $\Box \Box \beta \in \Delta$. Hence $\mathbb{V}_\Delta(\Box \beta) \in \{T^+,I^+\}$ too.

Consider now the  {\bf K45m} system obtained by adding to {\bf K4m} the schema axiom \axfive. It is not difficult  to check that this system is sound and complete with respect to {\bf K45m}-Nmatrices. For soundness, note that if $v(\Diamond \Box \alpha) \in +$ then $v(\Box \alpha) \in T \cup C$ and then $v(\alpha)\subseteq T \cup I$, that is, $v(\alpha) \in \{T^+,I^+\}$. For completeness, note that: (i) if $\mathbb{V}_\Delta(\beta) \in C$, then $\Diamond \beta, \Diamond \neg \beta \in \Delta$. Hence, by \axfive\ and Lemma~\ref{A}~(i), $\Box \neg \Box \beta, \neg \Box \beta \in \Delta$ and  $\mathbb{V}_\Delta(\Box \beta) \in \{F^-,I^-\}$; if (ii) $\mathbb{V}_\Delta(\beta) \in F$, then $\Box \neg \beta, \Diamond \neg \beta \in \Delta$ and, once again by \axfive\ and Lemma~\ref{A}~(i), $\Box \neg \Box \beta, \neg \Box \beta \in \Delta$ and  $\mathbb{V}_\Delta(\Box \beta) \in \{F^-,I^-\}$ too.

\section{Recovering a logic inside another} \label{DAT}

The so-called {\em  recovery operators} play a fundamental role in the class of paraconsistent logics known as {\em Logics of Formal Inconsistency} (in short, {\bf LFI}s, see for instance~\cite{CC16}). Recall that a given logic {\bf S} is an {\bf LFI} if it is paraconsistent w.r.t. some negation $\neg$ (that is, there are formulas $\alpha$ and $\beta$ such that $\alpha,\neg\alpha \nvdash_{\bf S} \beta$). In addition, there is a  (primitive of defined) unary connective $\circ$ in {\bf S}, which is called a {\em consistency operator}, such that $\alpha,\neg\alpha, \circ\alpha \vdash_{\bf S} \beta$ for every formula $\alpha$ and $\beta$.\footnote{This is a slightly simplified version of the definition of {\bf LFI}s, see~\cite[Chapter~2]{CC16}.}  Given a logic {\bf S} which is  an  {\bf LFI} contained in a presentation of {\bf PC} in the same signature of {\bf S}, then the consistency operator $\circ$ allows us to recover {\bf PC} inside  {\bf S} as follows: for every (finite) set $\Gamma \cup \{\alpha\}$ of formulas,
$$\Gamma \vdash_{\bf PC} \alpha \ \mbox{ iff } \ (\exists \Upsilon)[\Gamma, \{\circ \beta \ : \  \beta \in \Upsilon\} \vdash_{\bf S} \alpha].$$
The relation stated above is called  a {\em Derivability Adjustment Theorem} (DAT).
The technique of DATs was originally proposed by D. Batens in the framework of {\em Adaptive logics}, but this idea, together with the notion  of consistency operators, was already used by N. da Costa for his well-known hierarchy of paraconsistent systems called $C_n$ (see~\cite{dac:63}). The basic {\bf LFI} called {\bf mbC} has the following axiom (called {\em gently explosion law}): $\circ\alpha \to(\alpha \to(\neg\alpha \to\beta))$. This axiom, together with the others,  guarantees the existence of a DAT between {\bf mbC} and  {\bf PC}. Observe that the standard explosion law $\alpha \to(\neg\alpha \to\beta)$ of  {\bf PC} is {\em recovered} in {\bf mbC} by adding the additional assumption $\circ\alpha$ (namely, `$\alpha$ is consistent'). Namely, $\circ\alpha \vdash_{\bf mbC} \alpha \to(\neg\alpha \to\beta)$.

It is worth noting that the system {\bf Km} was obtained from {\bf Dm} by weakening some of its axioms, by using a technique similar to that of {\bf LFI}s. Specifically, axioms  \axK, \axKu, \axKd, \axMt\ and \axMc\ of {\bf Dm} can only be applied in  {\bf Km} under certain assumptions, such as $\Diamond\alpha$ or $\Diamond\neg\beta$. This suggest the possibility of defining a formula which plays the role of a recovery operator in   {\bf Km} with respect to {\bf Dm}.

\begin{definition}
For any formula $\alpha$, let $\circ\alpha:=\Box\alpha \to \Diamond\alpha$ be the  {\em recovery operator for {\bf Km} with respect to {\bf Dm}}.
\end{definition}

\noindent Observe that the formula schema $\circ\alpha$ is $D(\alpha)$, the formula schema of the axiom schema \axD. It is also worth noting that the formula $\circ\alpha$ is equivalent to $\Diamond\alpha \vee \Diamond\neg\alpha$ in {\bf Km}. The multioperator associated to $\circ$ in the Nmatrix for {\bf Km} is defined as follows:

\begin{displaymath}
\begin{array}{|l|l|}
   	   \hline  	   & \circ \\
\hline \hline  T^+ & + \\
 		\hline C^+ & + \\
 		\hline F^+ & + \\
 		\hline I^+ & - \\
 		\hline T^- & + \\
 		\hline C^- & + \\
 		\hline F^- & + \\
 		\hline I^- & - \\
 		\hline	
\end{array}
\end{displaymath}

\ 

\noindent
That is, $\circ\alpha$ receives a designated truth-value if and only if $\alpha$ receives a value out of $I=\{I^+, I^-\}$; that is, if and only if $\alpha$ receives a truth-value  in the Nmatrix for {\bf Dm}. Boths aspects, namely: the syntactical one, concerning $\circ$ as being a recovery operator, and the semantical one, in which $\circ$ `marks out' exactly the truth-values in {\bf Dm}, will be fundamental in order to obtain a DAT for {\bf Km} with respect to {\bf Dm} (see Theorem~\ref{DAT1} below). If we consider the formula  $\circ\alpha$ as denoting the `modal consistency' of $\alpha$, then  $\bullet\alpha:=\neg{\circ}\alpha$ would be a formula denoting the `modal inconsistency' of $\alpha$. This terminology is suggested by the theory of Logics of Formal Inconsistency. Observe that  $\bullet\alpha$ is equivalent in {\bf Km} to  $\Box\alpha \wedge \Box \neg \alpha$, the formula used in axioms \axIu\ and \axId.  Clearly, the truth-value of $\bullet\alpha$ is designated if and only if the truth-value of $\alpha$ is in $\{I^+, I^-\}$.

In order to obtain a DAT between {\bf Km} and {\bf Dm}, it is necessary to guarantee that $\circ\alpha$ recovers all the axioms from {\bf Dm} which were weakened in  {\bf Km}. Recall the notation introduced in Notation~\ref{nota}.

\begin{lemma} \label{lema-DAT}
Let $\alpha$ be a formula. Then:\\[1mm]
(1) $\vdash_{\bf Km}  \Diamond\alpha \to \circ\alpha$;\\[1mm]
(2) $\vdash_{\bf Km} \Diamond\neg\alpha \to \circ\alpha$;\\[1mm]
(3) $\vdash_{\bf Km} (\Diamond\alpha \vee \Diamond\neg\alpha) \to \circ\alpha$;\\[1mm]
(4) $\vdash_{\bf Km} (\circ\alpha  \to K(\alpha,\beta)) \to K'(\alpha,\beta)$;\\[1mm]
(5) $\vdash_{\bf Km}  (\circ\beta  \to K1(\alpha,\beta)) \to K1'(\beta,\alpha)$;\\[1mm]
(6) $\vdash_{\bf Km}  (\circ\alpha  \to K2(\alpha,\beta)) \to K2'(\alpha,\beta)$;\\[1mm]
(7) $\vdash_{\bf Km}  (\circ\alpha  \to M3(\beta,\alpha)) \to M3'(\alpha,\beta)$;\\[1mm]
(8) $\vdash_{\bf Km}  (\circ\beta  \to M4(\alpha,\beta)) \to M4'(\beta,\alpha)$;\\[1mm]
(9) $(\bullet\alpha  \to \bullet(\alpha \to \beta)) \equiv I1(\alpha,\beta)$;\\[1mm]
(10) $(\bullet\beta  \to \bullet(\alpha \to \beta)) \equiv I2(\beta,\alpha)$.
\end{lemma}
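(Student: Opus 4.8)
The lemma consists of ten items, and they fall into three homogeneous groups, so the plan is to handle each group by a uniform argument, leaning on the deduction metatheorem (DMT), on \textbf{PC}, and on the equivalences already recorded in Lemma~\ref{A}. For items (1)--(3): since $\circ\alpha$ is (by definition) the formula $\Box\alpha \to \Diamond\alpha$, and since in \textbf{Km} one has $\Diamond\alpha \vee \Diamond\neg\alpha \equiv \circ\alpha$ (as remarked just before the lemma, because $\neg\Box\alpha \equiv \Diamond\neg\alpha$ by Lemma~\ref{A}(i), so $\Box\alpha\to\Diamond\alpha$ is \textbf{PC}-equivalent to $\neg\Box\alpha \vee \Diamond\alpha$, i.e. $\Diamond\neg\alpha \vee \Diamond\alpha$), items (1) and (2) are immediate from the fact that each disjunct entails the disjunction in \textbf{PC}, and (3) is just that equivalence in the left-to-right direction. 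Strictly speaking one should first justify $\Diamond\alpha\vee\Diamond\neg\alpha \equiv \circ\alpha$ in \textbf{Km}; I would do this as a one-line consequence of Lemma~\ref{A}(i) plus \textbf{PC} and DMT.

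For items (4)--(8), which have the shape $\vdash_{\bf Km} (\circ\gamma \to A(\ldots)) \to A'(\ldots)$ for $A \in \{K, K1, K2, M3, M4\}$: the plan is to unwind both sides using Notation~\ref{nota} and the definitions of the primed axioms, and then observe in each case that the hypothesis guarding the primed axiom (namely $\Diamond\alpha$, or $\Diamond\neg\beta$, or $\Diamond\alpha$, or $\Diamond\alpha\vee\Diamond\neg\alpha$, or $\Diamond\neg\beta$ respectively) \textbf{PC}-implies $\circ$ of the relevant formula, via items (1)--(3) just proved. Concretely, for (4): $K'(\alpha,\beta)$ is $\Diamond\alpha \to K(\alpha,\beta)$; assuming $\circ\alpha \to K(\alpha,\beta)$ and $\Diamond\alpha$, item (1) gives $\circ\alpha$, hence $K(\alpha,\beta)$ by \MP; discharging via DMT twice yields the claim. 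Items (5), (6), (8) are the same pattern with $\circ\beta$ and $\Diamond\neg\beta$ (using item (2)) or with $\circ\alpha$ and $\Diamond\alpha$ (using item (1)); item (7) is the same pattern but the guard is the disjunction $\Diamond\alpha\vee\Diamond\neg\alpha$, so one invokes item (3). I would present (4) in full and then say ``(5)--(8) are analogous, using items (1)--(3) to obtain the antecedent $\circ(\cdot)$ from the guard of the primed axiom, together with \textbf{PC} and (DMT)''.

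For items (9) and (10), which assert an equivalence $\equiv$ rather than a theorem: recall $\bullet\delta = \neg\circ\delta$, which is \textbf{Km}-equivalent to $\Box\delta \wedge \Box\neg\delta$ (by Lemma~\ref{A}(i) and \textbf{PC}: $\neg(\Box\delta\to\Diamond\delta) \equiv \Box\delta \wedge \neg\Diamond\delta \equiv \Box\delta \wedge \Box\neg\delta$, the last step by Lemma~\ref{A}(iii)). Then $\bullet\alpha \to \bullet(\alpha\to\beta)$ becomes, after rewriting both occurrences of $\bullet$, exactly $I1(\alpha,\beta)$, which is $(\Box\alpha \wedge \Box\neg\alpha) \to (\Box(\alpha\to\beta)\wedge\Box\neg(\alpha\to\beta))$; the $\equiv$ then holds because replacement of \textbf{Km}-equivalent subformulas built only from $\Box$, $\neg$, $\to$ (no nesting of $\Box$ around the replaced part) preserves \textbf{Km}-derivability of the implications in both directions --- here the relevant substitutions are inside \textbf{PC}-combinations of already-fixed modal formulas, so no failure of congruence for $\Box$ is triggered. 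Item (10) is identical with $\beta$ in place of $\alpha$ in the guard.

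\textbf{Main obstacle.} The routine parts are genuinely routine; the one point that needs care is the congruence issue flagged in the text right before Lemma~\ref{A} --- that $\equiv$ is \emph{not} a congruence because $\Box$ does not respect it. So in items (9)--(10) I must be careful that the rewriting $\bullet\delta \rightsquigarrow \Box\delta\wedge\Box\neg\delta$ is applied only at the top level (where $\bullet\delta$ is not in the scope of a further $\Box$), which is indeed the case in both $I1$ and $I2$. In other words, the equivalences in (9)--(10) should be argued directly at the level of the whole implication using (DMT) and \textbf{PC}, rather than by appealing to a (nonexistent) general replacement theorem; that is the only place a naive proof could go wrong.
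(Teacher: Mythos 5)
Your proposal is correct and follows essentially the same route as the paper: items (1)--(3) from the {\bf Km}-equivalence $\circ\alpha \equiv \Diamond\alpha \vee \Diamond\neg\alpha$ (via Lemma~\ref{A} and {\bf PC}), items (4)--(8) from (1)--(3) by {\bf PC}-reasoning (the paper leaves the DMT bookkeeping implicit), and items (9)--(10) from $\bullet\alpha \equiv \Box\alpha \wedge \Box\neg\alpha$. Your explicit caution that the rewriting in (9)--(10) occurs only outside the scope of $\Box$, so the failure of congruence for $\Box$ is never invoked, is a sound elaboration of what the paper compresses into ``reasoning in {\bf PC}''.
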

\begin{proof} \ \\
(1)-(3): It follows from the fact that $\circ\alpha \equiv (\Diamond\alpha \vee \Diamond\neg\alpha)$  and by reasoning in  {\bf PC}.\\[1mm]
(4)-(8): It follows from (1)-(3) and by reasoning in  {\bf PC}.\\[1mm]
(9)-(10): It follows from the fact that $\bullet\alpha \equiv (\Box \alpha \wedge \Box \neg \alpha)$  and by reasoning in  {\bf PC}.
\end{proof}

Let ${\bf Km}_\circ$ be the system obtained from {\bf Km} by replacing the axiom schemas \axKp, \axKup, \axKdp, \axMtp, \axMcp, \axIu\ and \axId\ by the following seven axiom schemas:

$$
\begin{array}{ll}
\axKpp & \circ  \alpha \to K(\alpha,\beta)\\[2mm]
\axKupp & \circ \beta \to K1(\alpha,\beta)\\[2mm]
\axKdpp & \circ \alpha \to  K2(\alpha,\beta)\\[2mm]
\axMtpp & \circ \alpha \to M3(\beta,\alpha)\\[2mm]
\axMcpp & \circ\beta \to  M4(\alpha,\beta)\\[2mm]
\axIup & \bullet\alpha \to \bullet(\alpha \to \beta)\\[2mm]
\axIdp & \bullet\beta \to \bullet(\alpha \to \beta)
\end{array}$$

\begin{proposition} \label{equivKm}
The systems {\bf Km} and ${\bf Km}_\circ$ coincide, that is: $\Gamma \vdash_{\bf Km} \alpha$ iff $\Gamma \vdash_{{\bf Km}_\circ} \alpha$ for every $\Gamma \cup \{\alpha\} \subseteq For$.
\end{proposition}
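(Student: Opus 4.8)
The plan is to prove inter\-derivability of the two Hilbert calculi schema by schema. Since $\mathbf{Km}$ and ${\bf Km}_\circ$ share all the axiom schemas of {\bf PC} together with \axMu, \axMd, \axDNu\ and \axDNd, and both have \MP\ as their only inference rule, it suffices to establish: (a) every instance of \axKpp, \axKupp, \axKdpp, \axMtpp, \axMcpp, \axIup\ and \axIdp\ is derivable in $\mathbf{Km}$; and (b) every instance of \axKp, \axKup, \axKdp, \axMtp, \axMcp, \axIu\ and \axId\ is derivable in ${\bf Km}_\circ$. Once (a) and (b) are in place, any derivation in either calculus is transformed into a derivation in the other by replacing each application of a non\-shared axiom instance by a derivation of it, using that \MP\ is the sole rule; hence $\Gamma \vdash_{\bf Km}\alpha$ iff $\Gamma \vdash_{{\bf Km}_\circ}\alpha$.

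The key preliminary is that the equivalences $\circ\alpha \equiv \Diamond\alpha \vee \Diamond\neg\alpha$ and $\bullet\alpha \equiv \Box\alpha \wedge \Box\neg\alpha$ hold not only in $\mathbf{Km}$ but also in ${\bf Km}_\circ$. Indeed, their proofs need only propositional reasoning together with \axDNu, \axDNd\ and Lemma~\ref{A}, and the proof of Lemma~\ref{A} itself appeals only to {\bf PC}, \axDNu\ and \axDNd\ --- all present in ${\bf Km}_\circ$. Consequently Lemma~\ref{lema-DAT}(1)--(3) and (9)--(10) hold verbatim in ${\bf Km}_\circ$, and so do Lemma~\ref{lema-DAT}(4)--(8), whose proofs consume only (1)--(3) and {\bf PC}. (So there is no circularity in invoking Lemma~\ref{lema-DAT} inside the proof of the present proposition.)

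For direction (b) the argument is short. In ${\bf Km}_\circ$ we have $\circ\alpha \to K(\alpha,\beta)$ (that is, \axKpp); combining it with $\Diamond\alpha \to \circ\alpha$ (Lemma~\ref{lema-DAT}(1)) gives $\Diamond\alpha \to K(\alpha,\beta)$, which is \axKp\ --- equivalently, apply Lemma~\ref{lema-DAT}(4) and \MP\ directly. The schemas \axKup, \axKdp, \axMtp\ and \axMcp\ are obtained identically from \axKupp, \axKdpp, \axMtpp, \axMcpp\ and Lemma~\ref{lema-DAT}(5)--(8) respectively (for \axMtp\ one may instead simply use $\circ\alpha \equiv \Diamond\alpha \vee \Diamond\neg\alpha$), and \axIu, \axId\ follow from \axIup, \axIdp\ by Lemma~\ref{lema-DAT}(9)--(10).

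For direction (a) I would reason in the opposite direction, using the reverse halves $\circ\alpha \to (\Diamond\alpha \vee \Diamond\neg\alpha)$ and $\bullet\alpha \to (\Box\alpha \wedge \Box\neg\alpha)$ of the above equivalences (available in $\mathbf{Km}$) plus a brief case split. To derive \axKpp, assume $\circ\alpha$, hence $\Diamond\alpha \vee \Diamond\neg\alpha$: under $\Diamond\alpha$ apply \axKp; under $\Diamond\neg\alpha$, Lemma~\ref{A}(i) yields $\neg\Box\alpha$, from which $K(\alpha,\beta)=\Box(\alpha\to\beta)\to(\Box\alpha\to\Box\beta)$ follows by {\bf PC}. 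The case of \axKdpp\ is analogous, again discharging the $\Diamond\neg\alpha$\nobreakdash-branch via $\neg\Box\alpha$ and {\bf PC} (now with $K2(\alpha,\beta)=\Diamond(\alpha\to\beta)\to(\Box\alpha\to\Diamond\beta)$); for \axKupp, under $\Diamond\neg\beta$ apply \axKup, and under $\Diamond\beta$ the formula $K1(\alpha,\beta)=\Box(\alpha\to\beta)\to(\Diamond\alpha\to\Diamond\beta)$ is a {\bf PC}\nobreakdash-consequence of $\Diamond\beta$; for \axMcpp, under $\Diamond\neg\beta$ apply \axMcp, and under $\Diamond\beta$ use Lemma~\ref{B}(v) to get $\Diamond(\alpha\to\beta)$, whence $M4(\alpha,\beta)$. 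Finally \axMtpp\ is immediate from \axMtp\ and the $\circ$\nobreakdash-equivalence, and \axIup, \axIdp\ from \axIu, \axId\ and the $\bullet$\nobreakdash-equivalence (i.e.\ Lemma~\ref{lema-DAT}(9)--(10) read right to left). The whole proof is essentially bookkeeping; the main place where care is needed is direction (a), where for each of the five guarded schemas one must check that the ``unguarded'' branch is closed by the appropriate means --- pure {\bf PC}, Lemma~\ref{A}(i), or Lemma~\ref{B}(v) --- and one must track the schema-variable slots correctly (recall Notation~\ref{nota}), especially in the \axMcpp\ and \axMtpp\ cases, where the order of the arguments differs between the primed and double-primed versions.
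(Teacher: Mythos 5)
Your proposal is correct, and one of its two halves diverges from the paper in an interesting way. For the direction showing that ${\bf Km}_\circ$ derives the axioms \axKp, \axKup, \axKdp, \axMtp, \axMcp, \axIu, \axId\ of {\bf Km}, you follow essentially the paper's route (Lemma~\ref{lema-DAT}(4)--(10) plus \MP); you are in fact more careful than the paper, which states those lemma items only for $\vdash_{\bf Km}$ and tacitly uses that their derivations consume only the shared axioms ({\bf PC}, \axMu, \axMd, \axDNu, \axDNd), so that they are available in ${\bf Km}_\circ$ --- exactly the non-circularity point you spell out. For the converse direction the two arguments genuinely differ: the paper goes through the semantics, adapting the proof of Theorem~\ref{SoundKm} to check that every axiom of ${\bf Km}_\circ$ is valid in the Nmatrix of {\bf Km} and then invoking completeness (Theorem~\ref{Km-compl}) to conclude derivability, whereas you derive each double-primed axiom syntactically in {\bf Km} by splitting the guard $\circ\alpha$ (resp.\ $\circ\beta$) into $\Diamond\alpha \vee \Diamond\neg\alpha$ and closing the branch not covered by the primed axiom via {\bf PC}, Lemma~\ref{A}(i), or Lemma~\ref{B}(v); I checked these case analyses (including the vacuous-antecedent discharges via $\neg\Box\alpha$ and the argument-slot bookkeeping for \axKupp\ and \axMcpp) and they go through. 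The trade-off: the paper's semantic detour is shorter to write once soundness and completeness are on the table, while your proof is purely finitary and self-contained --- it does not depend on the Nmatrix semantics or on Theorem~\ref{Km-compl} at all, so it would survive even in a purely proof-theoretic presentation of the two calculi.
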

\begin{proof}
By Lemma~\ref{lema-DAT}~(4)-(10) it follows that ${\bf Km}_\circ$ is stronger than {\bf Km}, that is: if $\Gamma \vdash_{\bf Km} \alpha$ then $\Gamma \vdash_{{\bf Km}_\circ} \alpha$. Conversely, by adapting the proof of Theorem~\ref{SoundKm} it follows that every axiom in ${\bf Km}_\circ$ is valid in the Nmatrix of {\bf Km}. By completeness of {\bf Km}, all these axioms are derivable in {\bf Km}. Hence, $\Gamma \vdash_{{\bf Km}_\circ} \alpha$ implies that $\Gamma \vdash_{\bf Km} \alpha$.  
\end{proof}

\

\begin{remark} {\em 
The latter result clearly shows that {\bf Km} can be seen as a weak version of {\bf Dm} in which some axioms (or principles) are weakened and controlled by means of a modal consistency recovery operator $\circ$, such that the corresponding modal inconsistency operator $\bullet$ satisfies certain propagation properties  (described by axioms \axIup\ and \axIdp). This is analogous to the case of {\bf LFI}s, as explained above. Observe that  \axIup\ and \axIdp\ are equivalent in {\bf Km} to
$$
\begin{array}{ll}
\axIupp & \circ(\alpha \to \beta) \to \circ\alpha\\[2mm]
\axIdpp & \circ(\alpha \to \beta) \to \circ\beta
\end{array}$$
which are properties of retropropagation of the modal consistency operator $\circ$. }
\end{remark}

Now, a simple but relevant  technical result concerning Nmatrices will be stated. Recall from~\cite{CFG} the notion of submultialgebra. It states that a multialgebra $\mathcal{A}$ is a submultialgebra of another $\mathcal{B}$, denoted by $\mathcal{A} \subseteq_{sm}\mathcal{B}$ provided that: both are defined over the same signature; the domain $A$ of $\mathcal{A}$ is contained in the domain $B$ of $\mathcal{B}$; and $c^\mathcal{A}(\vec a) \subseteq c^\mathcal{B}(\vec a)$ for every $n$-ary connective $c$ and any $\vec a \in A^n$. Here,  $c^\mathcal{A}$ and $c^\mathcal{B}$ denote the multioperations associated to the connective $c$ in $\mathcal{A}$ and $\mathcal{B}$, respectively.

\begin{proposition} \label{prop-subalg}
Let $\mathcal{A}_{\bf Tm}$, $\mathcal{A}_{\bf Dm}$ and $\mathcal{A}_{\bf Km}$ be the multialgebras underlying the Nmatrices  $\mathcal{M}_{\bf Tm}$, $\mathcal{M}_{\bf Dm}$  and $\mathcal{M}_{\bf Km}$  for the logics {\bf Tm}, {\bf Dm} and {\bf Km}, respectively. Then, the following holds: $\mathcal{A}_{\bf Tm} \subseteq_{sm} \mathcal{A}_{\bf Dm} \subseteq_{sm}\mathcal{A}_{\bf Km}$.   
\end{proposition}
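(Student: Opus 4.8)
The plan is to verify the two relations $\mathcal{A}_{\bf Tm}\subseteq_{sm}\mathcal{A}_{\bf Dm}$ and $\mathcal{A}_{\bf Dm}\subseteq_{sm}\mathcal{A}_{\bf Km}$ directly from the definition of submultialgebra, exploiting the remarks already made in Section~\ref{values} and the sections that follow, namely that the Nmatrix for {\bf Tm} (resp.\ for {\bf Dm}) is obtained from the one for {\bf Dm} (resp.\ for {\bf Km}) by deleting the extra truth-values $T^-,F^+$ (resp.\ $I^+,I^-$). First I would note that all three Nmatrices are built over the same signature $\{\neg,\to,\Box,\Diamond\}$ and that the domain inclusions $\{T^+,C^+,C^-,F^-\}\subseteq\{T^+,C^+,F^+,T^-,C^-,F^-\}\subseteq\{T^+,C^+,F^+,I^+,T^-,C^-,F^-,I^-\}$ are immediate; so the whole matter reduces to checking, for each connective $c$ and each $\vec a$ built from elements of the smaller domain, the inclusion $c^{\mathcal{A}}(\vec a)\subseteq c^{\mathcal{B}}(\vec a)$.

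The core step is to establish the sharper statement that each smaller Nmatrix is precisely the sub-Nmatrix of the larger one induced on its domain: for every connective $c$ and every tuple $\vec a$ over $\{T^+,C^+,C^-,F^-\}$ one has $c^{\bf Tm}(\vec a)=c^{\bf Dm}(\vec a)\cap\{T^+,C^+,C^-,F^-\}$, and likewise $c^{\bf Dm}(\vec a)=c^{\bf Km}(\vec a)\cap\{T^+,C^+,F^+,T^-,C^-,F^-\}$ for every $\vec a$ over $\{T^+,C^+,F^+,T^-,C^-,F^-\}$; in passing one observes that none of these intersections is empty, which re-confirms that the smaller structures really are multialgebras. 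This amounts to a finite inspection of the tables: a one-column comparison for $\neg$, for $\Box$ and for $\Diamond$, and a grid comparison of the $4\times 4$ implication table against the restriction of the $6\times 6$ one, and then of the $6\times 6$ one against the restriction of the $8\times 8$ one. Once this is done, the proposition follows at once, because for any Nmatrix $\mathcal{B}$ over a domain $B$ and any nonempty $S\subseteq B$ the structure $\mathcal{B}|_S$ defined by $c^{\mathcal{B}|_S}(\vec a):=c^{\mathcal{B}}(\vec a)\cap S$ for $\vec a\in S^{n}$ trivially satisfies $c^{\mathcal{B}|_S}(\vec a)\subseteq c^{\mathcal{B}}(\vec a)$, hence $\mathcal{B}|_S\subseteq_{sm}\mathcal{B}$; chaining the two identities then yields $\mathcal{A}_{\bf Tm}\subseteq_{sm}\mathcal{A}_{\bf Dm}\subseteq_{sm}\mathcal{A}_{\bf Km}$.

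I expect no conceptual difficulty here; the work is purely combinatorial, and the only things to watch are two potential sources of misreading. First, the symbols ``$+$'' and ``$-$'' occurring inside the tables for $\Box$, $\Diamond$ and $\to$ always denote the designated and non-designated sets of the \emph{ambient} algebra, and these \emph{grow} along the hierarchy ($\{T^+,C^+\}\subseteq\{T^+,C^+,F^+\}\subseteq\{T^+,C^+,F^+,I^+\}$, dually for ``$-$''); so, for instance, what must be compared for $\Box$ applied to $T^+$ is $\{T^+,C^+\}\subseteq\{T^+,C^+,F^+\}\subseteq\{T^+,C^+,F^+,I^+\}$, not an identity of literal symbols. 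Second, several genuinely non-deterministic cells of $\to$ strictly enlarge on moving up the hierarchy---for example $C^+\to C^-$ is $\{C^-\}$ in {\bf Tm} but $\{T^-,C^-\}$ in {\bf Dm}---so one cannot simply claim ``the tables agree on the common values''; the correct and sufficient statement is that the smaller cell equals the larger cell intersected with the smaller domain. The main ``obstacle'', accordingly, is just the discipline of running through \emph{every} entry of the three implication tables and every row of the negation and modal tables, rather than spot-checking, so as to be sure the induced-subalgebra identities hold without exception.
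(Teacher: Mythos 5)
Your proposal is correct and takes essentially the same approach as the paper, whose entire proof is ``It follows by mere inspection of the multioperations for each logic''; your induced-subalgebra identities ($c^{\bf Tm}(\vec a)=c^{\bf Dm}(\vec a)\cap A_{\bf Tm}$, etc.) are just a sharper bookkeeping of that inspection, already asserted informally where the paper says that deleting $T^-,F^+$ from {\bf Dm} (resp.\ $I^+,I^-$ from {\bf Km}) gives back the Nmatrix for {\bf Tm} (resp.\ {\bf Dm}). Note only that a couple of cells as printed are typos (the {\bf Dm} negations of $T^+$ and $F^+$, and the {\bf Tm} implication entry for $C^- \to F^-$), so your cell-by-cell check goes through for the corrected tables inherited from the earlier papers.
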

\begin{proof}
If follows by mere inspection of the multioperations for each logic.
\end{proof}

\

\begin{corollary} \label{val-restr} Let $A_{\bf Tm}$, $A_{\bf Dm}$ and $A_{\bf Km}$ be the domains of the Nmatrices  $\mathcal{M}_{\bf Tm}$, $\mathcal{M}_{\bf Dm}$  and $\mathcal{M}_{\bf Km}$  for the logics {\bf Tm}, {\bf Dm} and {\bf Km}, respectively.\\
1) Let $v$ ve a valuation in  $\mathcal{M}_{\bf Tm}$, and let $v_1:For \to A_{\bf Dm}$ and $v_2:For \to A_{\bf Km}$ be the functions given by $v_1(\alpha)=v_2(\alpha)=v(\alpha)$, for every formula $\alpha$. Then $v_1$ and $v_2$ are valuations in $\mathcal{M}_{\bf Dm}$ and $\mathcal{M}_{\bf Km}$, respectively.\\
2) Let $v$ ve a valuation in  $\mathcal{M}_{\bf Dm}$, and let $v_1:For \to  A_{\bf Km}$ be the function given by $v_1(\alpha)=v(\alpha)$, for every formula $\alpha$. Then $v_1$ is a valuation in $\mathcal{M}_{\bf Km}$.
\end{corollary}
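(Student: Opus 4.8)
The plan is to reduce everything to the submultialgebra relations $\mathcal{A}_{\bf Tm} \subseteq_{sm} \mathcal{A}_{\bf Dm} \subseteq_{sm}\mathcal{A}_{\bf Km}$ established in Proposition~\ref{prop-subalg}, together with the bare definition of a valuation over an Nmatrix (Definition~\ref{valNmat}). The general fact behind the corollary is this: if $\mathcal{A} \subseteq_{sm} \mathcal{B}$ and $v$ is a valuation over an Nmatrix with underlying multialgebra $\mathcal{A}$, then $v$, regarded as a function into the larger domain $B$ (which is legitimate since $A \subseteq B$), is automatically a valuation over any Nmatrix with underlying multialgebra $\mathcal{B}$, because the defining condition of a valuation only constrains the image of $v$ through the multioperations, and no condition on designated sets enters the statement.

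First I would treat part (1). Let $v$ be a valuation in $\mathcal{M}_{\bf Tm}$ and set $v_1 = v_2 = v$, now viewed as functions $For \to A_{\bf Dm}$ and $For \to A_{\bf Km}$; this is well-defined because $A_{\bf Tm} \subseteq A_{\bf Dm} \subseteq A_{\bf Km}$. Fix an $n$-ary connective $c$ of the common signature $\{\neg,\to,\Box\}$ and formulas $\varphi_1,\ldots,\varphi_n$. Since $v$ is a valuation in $\mathcal{M}_{\bf Tm}$, we have $v(c(\varphi_1,\ldots,\varphi_n)) \in c^{\mathcal{A}_{\bf Tm}}(v(\varphi_1),\ldots,v(\varphi_n))$. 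Because each $v(\varphi_i) \in A_{\bf Tm}$, Proposition~\ref{prop-subalg} applies to the tuple $(v(\varphi_1),\ldots,v(\varphi_n))$ and yields $c^{\mathcal{A}_{\bf Tm}}(v(\varphi_1),\ldots,v(\varphi_n)) \subseteq c^{\mathcal{A}_{\bf Dm}}(v(\varphi_1),\ldots,v(\varphi_n)) \subseteq c^{\mathcal{A}_{\bf Km}}(v(\varphi_1),\ldots,v(\varphi_n))$. Hence $v_1(c(\varphi_1,\ldots,\varphi_n))$ lies in $c^{\mathcal{A}_{\bf Dm}}(v_1(\varphi_1),\ldots,v_1(\varphi_n))$, and likewise $v_2(c(\varphi_1,\ldots,\varphi_n)) \in c^{\mathcal{A}_{\bf Km}}(v_2(\varphi_1),\ldots,v_2(\varphi_n))$. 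Since $c$ and the $\varphi_i$ were arbitrary, $v_1$ and $v_2$ are valuations in $\mathcal{M}_{\bf Dm}$ and $\mathcal{M}_{\bf Km}$, respectively.

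Part (2) is the identical argument applied to the single inclusion $\mathcal{A}_{\bf Dm} \subseteq_{sm} \mathcal{A}_{\bf Km}$: given a valuation $v$ in $\mathcal{M}_{\bf Dm}$, the function $v_1 = v$ into $A_{\bf Km}$ satisfies, for every connective $c$ and formulas $\varphi_1,\ldots,\varphi_n$, that $v_1(c(\varphi_1,\ldots,\varphi_n)) \in c^{\mathcal{A}_{\bf Dm}}(v(\varphi_1),\ldots,v(\varphi_n)) \subseteq c^{\mathcal{A}_{\bf Km}}(v(\varphi_1),\ldots,v(\varphi_n))$, so $v_1$ is a valuation in $\mathcal{M}_{\bf Km}$.

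There is essentially no obstacle to this argument; the only point that deserves a moment's attention is the observation that the arguments $v(\varphi_1),\ldots,v(\varphi_n)$ fed into the multioperations always remain inside the smaller domain, which is precisely what licenses the use of the inclusion $c^{\mathcal{A}}(\vec a)\subseteq c^{\mathcal{B}}(\vec a)$ from Proposition~\ref{prop-subalg}. As noted above, nothing about designated values is required, since the statement concerns the notion of valuation only.
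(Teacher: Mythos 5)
Your proposal is correct and follows exactly the route the paper takes: the paper's proof simply states that the claim is immediate from the definition of valuation over an Nmatrix together with Proposition~\ref{prop-subalg}, and your argument just spells out that same observation in detail.
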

\begin{proof}
It is immediate from the definition of valuation over an Nmatrix, and from Proposition~\ref{prop-subalg}.
\end{proof}

\

\begin{theorem} [DAT between {\bf Km} and {\bf Dm}] \label{DAT1}
Let $\Gamma \cup \{\varphi\} \subseteq For$ be a set of formulas. Then:
$$\Gamma \vdash_{\bf Dm} \varphi \ \mbox{ iff } \ (\exists \Upsilon)[\Gamma, \{\circ \delta \ : \  \delta \in \Upsilon\} \vdash_{\bf Km} \varphi].$$
\end{theorem}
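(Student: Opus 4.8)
The plan is to prove the two directions of the equivalence separately, using the soundness and completeness of both {\bf Km} and {\bf Dm} (the latter assumed from~\cite{con:cer:per:15,con:cer:per:17}), together with the semantic relationship between the two Nmatrices established in Proposition~\ref{prop-subalg} and Corollary~\ref{val-restr}, and the fact recorded just before the theorem that $\circ\alpha$ is designated in $\mathcal{M}_{\bf Km}$ precisely when the value of $\alpha$ lies in the domain $A_{\bf Dm}$ of $\mathcal{M}_{\bf Dm}$.

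For the right-to-left direction, suppose $\Gamma, \{\circ\delta : \delta \in \Upsilon\} \vdash_{\bf Km} \varphi$ for some $\Upsilon$. Since each $\circ\delta$ is the formula schema $D(\delta)$, i.e.\ an instance of the axiom \axD\ of {\bf Dm}, we have $\vdash_{\bf Dm} \circ\delta$ for every $\delta \in \Upsilon$; moreover {\bf Dm} extends {\bf Km} (again by Proposition~\ref{prop-subalg} plus soundness/completeness, or directly by comparing axioms), so from $\Gamma, \{\circ\delta : \delta\in\Upsilon\} \vdash_{\bf Km} \varphi$ we get $\Gamma, \{\circ\delta : \delta\in\Upsilon\} \vdash_{\bf Dm} \varphi$, and then by cutting the derivable formulas $\circ\delta$ we obtain $\Gamma \vdash_{\bf Dm} \varphi$. (Here one uses that {\bf Dm} is Tarskian, so derivable hypotheses may be discharged.)

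For the harder left-to-right direction, assume $\Gamma \vdash_{\bf Dm} \varphi$; by finitariness we may take $\Gamma$ finite. The idea is to let $\Upsilon$ be the (finite) set of all subformulas occurring in $\Gamma \cup \{\varphi\}$ — or, more robustly, the set of subformulas of every formula appearing in a fixed {\bf Dm}-derivation. I would then show, by contraposition, that $\Gamma, \{\circ\delta : \delta\in\Upsilon\} \vdash_{\bf Km} \varphi$: suppose not, so by completeness of {\bf Km} (Theorem~\ref{Km-compl}) there is a {\bf Km}-valuation $v$ with $v[\Gamma] \subseteq +$, $v(\circ\delta) \in +$ for all $\delta \in \Upsilon$, but $v(\varphi) \notin +$. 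From $v(\circ\delta)\in+$ for all $\delta\in\Upsilon$ we get, by the semantic characterization of $\circ$, that $v(\delta) \in A_{\bf Dm}$ for every $\delta\in\Upsilon$; in particular $v$ restricted to the subformula-closed set $\Upsilon$ never takes the values $I^+$ or $I^-$. The key step is then to argue that this restricted $v$ is (or induces) a genuine {\bf Dm}-valuation: since the offending values $I^\pm$ never appear as values of subformulas, and since on the remaining six values the multioperations of $\mathcal{M}_{\bf Km}$ agree with those of $\mathcal{M}_{\bf Dm}$ (again by inspection, cf.\ the remark that deleting $I^\pm$ from the {\bf Km}-tables yields the {\bf Dm}-tables), the valuation conditions of Definition~\ref{valNmat} for $\mathcal{M}_{\bf Dm}$ are satisfied on $\Upsilon$, and $v$ extends to a full {\bf Dm}-valuation. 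That {\bf Dm}-valuation witnesses $v[\Gamma]\subseteq +$ and $v(\varphi)\notin +$, contradicting $\Gamma \vDash_{\bf Dm}\varphi$, which holds by soundness of {\bf Dm}.

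The main obstacle is precisely this last step: making rigorous the claim that a {\bf Km}-valuation avoiding $I^\pm$ on a subformula-closed set restricts to a {\bf Dm}-valuation. One must be careful that $\Upsilon$ is chosen large enough — closed under subformulas and containing everything needed so that avoiding $I^\pm$ on $\Upsilon$ really does pin down the values of all the formulas in the derivation of $\varphi$ from $\Gamma$ — and that the multioperation tables of {\bf Km} and {\bf Dm} genuinely coincide off the $I^\pm$ rows and columns, which is the content of Proposition~\ref{prop-subalg} together with a direct comparison of the eight- and six-valued tables. Once the set $\Upsilon$ of subformulas is fixed and this table-coincidence is invoked, the argument is routine; but the choice of $\Upsilon$ and the verification that the restricted valuation is well-defined and satisfies the {\bf Dm}-conditions is where the real work lies.
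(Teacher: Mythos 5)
Your proof is correct, but it routes both directions differently from the paper, essentially swapping the syntactic and semantic roles. For the right-to-left direction the paper argues semantically: it takes a {\bf Dm}-valuation, views it as a {\bf Km}-valuation via Corollary~\ref{val-restr}(2), notes that the $\circ\delta$ get designated values, and concludes by soundness of {\bf Km} plus completeness of {\bf Dm}; you instead argue syntactically (each $\circ\delta$ is an instance of \axD, hence a {\bf Dm}-theorem, and {\bf Dm} extends {\bf Km}, so cut the hypotheses), which is equally fine and arguably more elementary. For the harder left-to-right direction the paper is purely syntactic: using Proposition~\ref{equivKm} (the axioms \axKpp--\axMcpp, \axIup, \axIdp) it shows $\circ\alpha \vdash_{\bf Km} K(\alpha,\beta)$ etc., and then transforms a fixed {\bf Dm}-derivation step by step, replacing each instance of a weakened axiom by a {\bf Km}-derivation from the extra hypothesis $\circ\alpha$ or $\circ\beta$ (and each instance of \axD\ by the hypothesis $\circ\alpha$ itself), so that $\Upsilon$ is read off from the derivation. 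You instead argue semantically by contraposition from completeness of {\bf Km} (Theorem~\ref{Km-compl}), taking $\Upsilon$ to be all subformulas of $\Gamma\cup\{\varphi\}$ and turning a {\bf Km}-countervaluation that avoids $I^\pm$ on $\Upsilon$ into a {\bf Dm}-countervaluation. This works, but note it rests on two facts the paper never states: (i) for inputs in $A_{\bf Dm}$ the {\bf Km}-multioperations, intersected with $A_{\bf Dm}$, coincide with the {\bf Dm}-multioperations --- this is strictly stronger than Proposition~\ref{prop-subalg}, which only gives $c^{\mathcal{A}_{\bf Dm}}(\vec a)\subseteq c^{\mathcal{A}_{\bf Km}}(\vec a)$, though it is true by inspection and implicit in the remark that deleting $I^\pm$ from the {\bf Km}-tables yields the {\bf Dm}-Nmatrix; and (ii) an extension lemma stating that a partial valuation satisfying the conditions of Definition~\ref{valNmat} on a subformula-closed set extends to a total valuation, which is standard for Nmatrices (multioperations are nonempty-valued, so one extends by structural recursion) but would have to be spelled out. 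What each approach buys: the paper's proof transformation needs no semantic machinery beyond what it has already proved and produces a finite $\Upsilon$ tailored to the given derivation, while your route gives a uniform, canonical choice of $\Upsilon$ and avoids inspecting which axioms occur in the derivation, at the cost of invoking completeness of {\bf Km}, soundness of {\bf Dm}, and the two auxiliary facts above.
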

\begin{proof}
({\em `Only if'}  part)
We begin by proving the following (recalling Notation~\ref{nota}): \\[1mm]
{\bf Fact 1:} Let $\textsf{(A)} \in \{\axK, \axKd\}$. Then $\circ\alpha \vdash_{\bf Km} A(\alpha,\beta)$ for every formula $\alpha,\beta$.\\[1mm]
Indeed, suppose that \textsf{(A)} is \axK. By Proposition~\ref{equivKm} the instance $\circ\alpha \to K(\alpha,\beta)$ of axiom \axKpp\ is derivable in {\bf Km}. Let $\alpha_1\ldots\alpha_n=\circ\alpha \to K(\alpha,\beta)$ be a derivation of that formula in {\bf Km}. Consider the following derivation in {\bf Km}:
$$
\begin{array}{ll}
1. & \circ\alpha \ \mbox{(hyp)}\\
2. & \alpha_1\\
& \vdots\\
n+1. & \alpha_n=\circ\alpha \to K(\alpha,\beta)\\ 
n+2. & K(\alpha,\beta) \ \mbox{(\MP\ $1$, $n+1$)} 
\end{array}$$
The above derivation shows that $\circ\alpha \vdash_{\bf Km} K(\alpha,\beta)$. The proof for axiom \axKd\ is similar, but now by using axiom \axKdpp.\\[1mm]
{\bf Fact 2:} Let $\textsf{(A)} \in \{\axKu, \axMc\}$. Then $\circ\beta \vdash_{\bf Km} A(\alpha,\beta)$ for every formula $\alpha,\beta$.\\[1mm]
The proof of {\bf Fact 2} is analogous to the one given for {\bf Fact 1}, but now by using axioms \axKupp\ and \axMcpp.\\[1mm]
{\bf Fact 3:} $\circ\alpha \vdash_{\bf Km} M3(\beta,\alpha)$ for every formula $\alpha,\beta$.\\[1mm]
The proof of {\bf Fact 3} is similar to that for {\bf Fact 1}, but now by using axiom \axMtpp.\\[2mm]
Now,  suppose that $\Gamma \vdash_{\bf Dm} \varphi$. Then, there is a finite sequence of formulas $\alpha_1\cdots \alpha_n$ such that $\alpha_n=\varphi$ and, for every $1 \leq i \leq n$, either $\alpha_i \in \Gamma$, or $\alpha_i$ is an instance of an axiom of {\bf Dm}, or there are $j,k<i$ such that $\alpha_i$ is obtained from $\alpha_j$ and $\alpha_k$ by \MP. In case that every instance of an axiom used in such derivation corresponds to axioms of {\bf Km}, then $\Gamma \vdash_{\bf Km} \varphi$ and so it suffices to take $\Upsilon=\emptyset$. Otherwise, let $\alpha_i$ be a formula in the derivation above which was included only because it is an instance of an axiom $\textsf{(A)}$ in {\bf Dm}, and assume that $\textsf{(A)}$ is not an axiom in {\bf Km}. That occurrence of $\alpha_i$ can be replaced in the sequence $\alpha_1\cdots \alpha_n$ by: either an occurrence of $\circ\alpha$ (which is  $\alpha_i$) as an additional hypothesis, if $\textsf{(A)}$ is \axD; or by  a  derivation of $\alpha_i$ in {\bf Km} from the additional hypothesis $\circ\alpha$ or $\circ\beta$ as described in the proofs of {\bf Fact 1}-{\bf Fact 3}, otherwise. Let $\Upsilon$ be the set of formulas $\delta$ such that $\circ\delta$ was added as an additional hypothesis by the method described above. Then, the sequence of formulas $\beta_1\cdots \beta_m$ obtained from  $\alpha_1\cdots \alpha_n$ by means of this process constitutes a derivation in {\bf Km} of $\varphi$ from $\Gamma \cup \{\circ \delta \ : \  \delta \in \Upsilon\}$.\\[1mm]
({\em `If'}  part) Suppose that  $\Gamma, \{\circ \delta \ : \  \delta \in \Upsilon\} \vdash_{\bf Km} \varphi$ for some set $\Upsilon$ of formulas, and let $v$ be a valuation in  the Nmatrix $\mathcal{M}_{\bf Dm}$ for {\bf Dm} such that $v(\gamma)$ is designated, for every $\gamma  \in \Gamma$.  By  Corollary~\ref{val-restr}(2), the function $v_1:For \to  A_{\bf Km}$ given by $v_1(\alpha)=v(\alpha)$ for every $\alpha$ is a valuation in $\mathcal{M}_{\bf Km}$. Moreover, $v_1(\gamma)$ is designated in $\mathcal{M}_{\bf Km}$ for every $\gamma  \in \Gamma \cup \{\circ \delta \ : \  \delta \in \Upsilon\}$. This follows from the fact that $v_1(\gamma) \in  A_{\bf Dm}$ for every $\gamma$ and by the definition of the multioperator associated to $\circ$, as observed above. By hypothesis, and by soundness of {\bf Km}, $v_1(\varphi)$ is designated in  $\mathcal{M}_{\bf Km}$ and so, by definition of $v_1$ and of $\mathcal{M}_{\bf Dm}$, $v(\varphi)$ is designated in  $\mathcal{M}_{\bf Dm}$. This shows that  $\Gamma \models_{\bf Dm} \varphi$. Thus, $\Gamma \vdash_{\bf Dm} \varphi$ by completeness of {\bf Km} (see Theorem~\ref{Km-compl}).
\end{proof}

\

Now, consider the formula schema $\circ'\alpha:=(\Box\alpha \to \alpha) \wedge (\Box\neg\alpha \to \neg\alpha)$ (recalling that $\beta \wedge \gamma$ is a notation for $\neg(\beta \to \neg\gamma)$). With the notation for axioms stated in Notation~\ref{nota}, it follows that $\circ'\alpha$ corresponds to $T(\alpha) \wedge T(\neg\alpha)$, where $T$ is the formula schema of axiom \axT. In addition, observe that,  in $\mathcal{A}_{\bf Dm}$, $\circ' x=\{T^-, C^-\}$ if $x \in \{F^+, T^-\}$, and $\circ' x={+}$ otherwise. That is,  $v(\circ'\alpha)$ is a designated value in $\mathcal{M}_{\bf Dm}$ iff $v(\alpha)$ is a value in $A_{\bf Tm}$, for every valuation $v$ for $\mathcal{M}_{\bf Dm}$.  This being so, and by a technique analogous to that used in the proof of  Theorem~\ref{DAT1}, the following DATs can be easily proved:

\begin{theorem} [DAT between {\bf Dm} and {\bf Tm}] \label{DAT2}
Let $\Gamma \cup \{\varphi\} \subseteq For$ be a finite set of formulas. Then:
$$\Gamma \vdash_{\bf Tm} \varphi \ \mbox{ iff } \ (\exists \Upsilon)[\Gamma, \{\circ' \gamma \ : \  \gamma \in \Upsilon\} \vdash_{\bf Dm} \varphi].$$
\end{theorem}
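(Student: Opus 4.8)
The plan is to replay the proof of Theorem~\ref{DAT1} with the pair $({\bf Dm},{\bf Tm})$ in place of $({\bf Km},{\bf Dm})$ and with $\circ'$ in place of $\circ$. The structural fact that makes this possible is that {\bf Tm} and {\bf Dm} are obtained from {\bf PC} by adjoining the \emph{same} list of modal schemas \axK, \axKu, \axKd, \axMu, \axMd, \axMt, \axMc, \axDNu, \axDNd, the sole difference being that {\bf Tm} has \axT\ while {\bf Dm} has \axD\ in its place. Hence, in any {\bf Tm}-derivation, the only axiom instances that are not already instances of {\bf Dm}-axioms are instances $T(\delta)=\Box\delta\to\delta$ of \axT. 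The two ingredients supplied by the paragraph preceding the statement are exactly what is needed: syntactically, $\circ'\alpha$ recovers \axT\ over {\bf Dm} (indeed $\circ'\alpha$ is, by definition, the conjunction $T(\alpha)\wedge T(\neg\alpha)$, so $\circ'\alpha \vdash_{\bf Dm} T(\alpha)$ by a single {\bf PC}-step); and semantically, $v(\circ'\alpha)$ is designated in $\mathcal{M}_{\bf Dm}$ precisely when $v(\alpha)\in A_{\bf Tm}$.

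For the \emph{only if} direction I would take a {\bf Tm}-derivation $\alpha_1\cdots\alpha_n=\varphi$ of $\varphi$ from $\Gamma$. If no instance of \axT\ appears in it, the same sequence is a {\bf Dm}-derivation and $\Upsilon=\emptyset$ works. Otherwise, for each step $\alpha_i$ that is present only because it is an instance $T(\delta)$ of \axT, I would add $\circ'\delta$ to the hypotheses and replace $\alpha_i$ by the short {\bf Dm}-subderivation of $\Box\delta\to\delta$ from $\circ'\delta$. Taking $\Upsilon$ to be the collection of all formulas $\delta$ thus introduced, the transformed sequence is a {\bf Dm}-derivation of $\varphi$ from $\Gamma\cup\{\circ'\delta:\delta\in\Upsilon\}$; since only finitely many steps are touched, $\Upsilon$ is finite.

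For the \emph{if} direction I would argue semantically, as in Theorem~\ref{DAT1}, using soundness of {\bf Dm} and completeness of {\bf Tm}. Suppose $\Gamma,\{\circ'\delta:\delta\in\Upsilon\}\vdash_{\bf Dm}\varphi$ and let $v$ be any valuation in $\mathcal{M}_{\bf Tm}$ with $v[\Gamma]\subseteq +$. By Corollary~\ref{val-restr}(1) the map $v_1$ given by $v_1(\alpha)=v(\alpha)$ is a valuation in $\mathcal{M}_{\bf Dm}$, and since $v_1(\delta)=v(\delta)\in A_{\bf Tm}$ for every $\delta\in\Upsilon$, the characterisation of $\circ'$ recalled above gives $v_1(\circ'\delta)\in +$. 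Thus $v_1$ designates every formula in $\Gamma\cup\{\circ'\delta:\delta\in\Upsilon\}$, so $v_1(\varphi)\in +$ by soundness of {\bf Dm}, whence $v(\varphi)\in +$ because $+$ is the designated set of both Nmatrices. This shows $\Gamma\models_{\bf Tm}\varphi$, and completeness of {\bf Tm} yields $\Gamma\vdash_{\bf Tm}\varphi$.

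I do not anticipate any real obstacle: the argument is essentially a transcription of the proof of Theorem~\ref{DAT1}, and both of its load-bearing facts — that $\circ'\alpha$ proves $T(\alpha)$ in {\bf Dm}, and that $\circ'$ semantically carves out $A_{\bf Tm}$ inside $\mathcal{M}_{\bf Dm}$ (which in turn rests on $\mathcal{A}_{\bf Tm}\subseteq_{sm}\mathcal{A}_{\bf Dm}$ from Proposition~\ref{prop-subalg}) — are already established. The only point deserving a line of care is the preliminary remark that {\bf Tm} and {\bf Dm} share all their axioms except \axT\ versus \axD, since it is this that pins down \axT\ as the unique schema whose instances must be repaired in the \emph{only if} part.
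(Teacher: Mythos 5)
Your proposal is correct and follows exactly the route the paper intends: the paper only remarks that Theorem~\ref{DAT2} is proved ``by a technique analogous to that used in the proof of Theorem~\ref{DAT1}'', relying on the two facts you isolate, namely that $\circ'\alpha$ is $T(\alpha)\wedge T(\neg\alpha)$ (so $\circ'\delta \vdash_{\bf Dm} T(\delta)$ repairs each instance of \axT, the only {\bf Tm}-axiom absent from {\bf Dm}) and that $v(\circ'\alpha)$ is designated in $\mathcal{M}_{\bf Dm}$ iff $v(\alpha)\in A_{\bf Tm}$, used together with Corollary~\ref{val-restr}, soundness of {\bf Dm} and completeness of {\bf Tm}. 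Your write-up is essentially the same argument with the details filled in, and I see no gap.
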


\

\begin{theorem} [DAT between {\bf Km} and {\bf Tm}] \label{DAT3}
Let $\Gamma \cup \{\varphi\} \subseteq For$ be a finite set of formulas. Then:
$$\Gamma \vdash_{\bf Tm} \varphi \ \mbox{ iff } \ (\exists \Upsilon,\Upsilon')[\Gamma, \{\circ \delta \ : \  \delta \in \Upsilon\}, \{\circ' \gamma \ : \  \gamma \in \Upsilon'\} \vdash_{\bf Km} \varphi].$$
\end{theorem}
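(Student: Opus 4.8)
The plan is to obtain Theorem~\ref{DAT3} by \emph{composing} the two Derivability Adjustment Theorems already available, namely Theorem~\ref{DAT1} (between {\bf Km} and {\bf Dm}, via the modal consistency operator $\circ$) and Theorem~\ref{DAT2} (between {\bf Dm} and {\bf Tm}, via the operator $\circ'$). The key observation making this work is that both theorems are stated for an arbitrary (respectively, finite) set of premises, so in particular each may be applied to a premise set that has already been augmented with finitely many hypotheses of the form $\circ'\gamma$ or $\circ\delta$; hence the two adjustments can simply be chained.

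For the \emph{`only if'} direction, I would start from $\Gamma \vdash_{\bf Tm} \varphi$ with $\Gamma \cup \{\varphi\}$ finite, and first apply Theorem~\ref{DAT2} to obtain a set $\Upsilon'$ of formulas (which may be taken finite, by finitarity of $\vdash_{\bf Dm}$) such that $\Gamma, \{\circ'\gamma \ : \ \gamma \in \Upsilon'\} \vdash_{\bf Dm} \varphi$. Since $\Gamma \cup \{\circ'\gamma \ : \ \gamma \in \Upsilon'\}$ is again a finite set of formulas, I would then apply Theorem~\ref{DAT1} to it in the role of ``$\Gamma$'', obtaining a set $\Upsilon$ (finite, by finitarity of $\vdash_{\bf Km}$) with $\Gamma, \{\circ'\gamma \ : \ \gamma \in \Upsilon'\}, \{\circ\delta \ : \ \delta \in \Upsilon\} \vdash_{\bf Km} \varphi$, which is precisely the desired conclusion.

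For the \emph{`if'} direction, conversely, suppose $\Gamma, \{\circ\delta \ : \ \delta \in \Upsilon\}, \{\circ'\gamma \ : \ \gamma \in \Upsilon'\} \vdash_{\bf Km} \varphi$ for some sets $\Upsilon,\Upsilon'$ (which we may assume finite, replacing them by finite subsets if needed, using finitarity). Reading this as a {\bf Km}-derivation from the premise set $\bigl(\Gamma \cup \{\circ'\gamma \ : \ \gamma \in \Upsilon'\}\bigr) \cup \{\circ\delta \ : \ \delta \in \Upsilon\}$, the `if' part of Theorem~\ref{DAT1} yields $\Gamma, \{\circ'\gamma \ : \ \gamma \in \Upsilon'\} \vdash_{\bf Dm} \varphi$; then the `if' part of Theorem~\ref{DAT2} yields $\Gamma \vdash_{\bf Tm} \varphi$, as required.

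I do not expect a genuine obstacle once Theorems~\ref{DAT1} and~\ref{DAT2} are treated as black boxes: the argument is purely a matter of chaining the two adjustments and keeping track of which operator ($\circ$ versus $\circ'$) records the passage through which intermediate system. The only points needing attention are the bookkeeping of finiteness at each application (handled by the finitarity of the Hilbert calculi involved) and the legitimacy of applying Theorem~\ref{DAT1} to a premise set that already contains $\circ'$-hypotheses --- which is unproblematic, since that theorem imposes no constraint on the form of its premises. As an alternative to the composition route, one could instead re-run the direct argument of Theorem~\ref{DAT1}, now using simultaneously the semantic facts that $\circ$ ``marks out'' exactly the values of $\mathcal{M}_{\bf Dm}$ inside $\mathcal{M}_{\bf Km}$ and that $\circ'$ ``marks out'' exactly the values of $\mathcal{M}_{\bf Tm}$ inside $\mathcal{M}_{\bf Dm}$, together with Corollary~\ref{val-restr}; but the two-step composition above is shorter and needs no new verification.
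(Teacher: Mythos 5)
Your composition argument is correct: since Theorem~\ref{DAT1} is stated for arbitrary premise sets, it may indeed be applied to $\Gamma \cup \{\circ'\gamma : \gamma \in \Upsilon'\}$, and the finiteness hypothesis of Theorem~\ref{DAT3} is exactly what licenses the use of Theorem~\ref{DAT2} at the other end of the chain; the finitarity bookkeeping you mention is routine. This is, however, a different route from the paper's. The paper does not chain the two DATs; it proves Theorem~\ref{DAT3} (and \ref{DAT2}) directly ``by a technique analogous to that used in the proof of Theorem~\ref{DAT1}'': in the syntactic direction one replaces, inside a {\bf Tm}-derivation, each instance of an axiom weakened in {\bf Km} by a {\bf Km}-derivation from an added hypothesis $\circ\delta$ and each instance of \axT\ (via $\circ'$) by an added hypothesis $\circ'\gamma$; in the semantic direction one takes a valuation over $\mathcal{M}_{\bf Tm}$, views it as a {\bf Km}-valuation via Corollary~\ref{val-restr}(1), notes that it designates every $\circ\delta$ and $\circ'\gamma$ because its values lie in $A_{\bf Tm}$, and concludes by soundness of {\bf Km} and completeness of {\bf Tm}. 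Your chaining buys brevity and requires no new verification beyond the two black-box theorems, at the price of routing both directions through the intermediate system {\bf Dm} (so its soundness and completeness are invoked twice, once inside each DAT); the paper's direct argument bypasses {\bf Dm} altogether and keeps explicit control over which weakened axiom instances populate $\Upsilon$ versus $\Upsilon'$. The alternative you sketch in your last sentence is essentially the paper's intended proof.
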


\section{A deterministic implication} \label{sectDelta}

In 1996, M. Baaz expanded the infinite-valued G\"odel logic with  an unary operator called $\Delta$ defined as follows: $\Delta(1)=1$, and $\Delta(x)=0$ otherwise (see~\cite{baaz:96}).  The (dual of the) same operator was already considered by A. Monteiro, inspired by an example given in 1963 by L. Monteiro~\cite{LMont:63}, to consider in 1978 the variety $\mathbf{TMA}$ of {\em tetravalent modal algebras}, which expand the De Morgan algebras. This is why this kind of operator is now called {\em Monteiro-Baaz $\Delta$ operator}. In the context of fuzzy logic, P. H\'ajeck considered ${\bf BL}_\Delta$, the expansion of the basic fuzzy logic {\bf BL} by the $\Delta$ operator, whose axioms concerning $\Delta$ are the following (see~\cite[Chapter~2]{Hajeck}):

\begin{itemize}
  \item[] ($\Delta1$) \ \ $\Delta \alpha \vee \neg \Delta \alpha$
  \item[] ($\Delta2$) \ \ $\Delta (\alpha \vee \beta) \to (\Delta \alpha \vee \Delta \beta)$
  \item[] ($\Delta3$) \ \ $\Delta\alpha \to \alpha$
  \item[] ($\Delta4$) \ \ $\Delta\alpha \to \Delta\Delta\alpha$
  \item[] ($\Delta5$) \ \ $\Delta (\alpha \to \beta) \to (\Delta \alpha \to \Delta \beta)$
\end{itemize}

\

\noindent besides the  {\em Generalization} inference rule, analogous to necessitation rule in normal modal systems. As observed by H\'ajeck, the $\Delta$ operator is a kind of mixed modality which satisfies  properties of an (alethic) necessity operator, as well as a property, namely axiom ($\Delta2$), of a possibility operator.

In this section, we will consider $\rightarrow$ as a completely deterministic multioperator, under the perspective of interpreting $\Box$ as a kind of (non-deterministic) $\Delta$ operator. Because of axiom ($\Delta3$), it is natural of start with system {\bf Tm}. Given that $\alpha \vee \neg \alpha$ is equivalent in  {\bf Tm} to $\alpha \to \alpha$, then $\Box$ satisfies axiom ($\Delta1$). In addition, axiom ($\Delta5$) corresponds to axiom \axK, which is valid in    {\bf Tm}. However, neither ($\Delta2$) nor ($\Delta4$) are valid in   {\bf Tm}. Clearly  ($\Delta4$) corresponds to axiom \axfour, hence the $\Box$ operator together with the implication from system {\bf T4m} satisfy all the properties of a $\Delta$ operator, with the exception of ($\Delta2$).

In order to satisfy axiom ($\Delta2$), recall that $\alpha \vee \beta:= \neg \alpha \to \beta$. This being so, the implication should be deterministic in such a way that the induced disjunction becomes the join (maximum) operator in the chain $F^- \leq C^- \leq C^+ \leq T^+$. Namely,

\begin{displaymath}
\begin{array}{c c}
\begin{array}{|l|l|l|l|l|}
 \hline \to  & T^+ & C^+ & C^- 	& F^-\\
   \hline \hline T^+ & T^+ 	  & C^+ 	& C^- 	& F^- \\
 		  \hline C^+ & T^+ 	  & C^+ 	& C^- 	& C^- \\
 		  \hline C^- & T^+ 	  & C^+ 	& C^+ 	& C^+ \\
 		  \hline C^- & T^+ 	  & T^+ 	& T^+ 	& T^+ \\	
 		  \hline
\end{array}
&

\begin{array}{|l|l|l|l|l|}
 \hline \vee  & T^+ & C^+ & C^- 	& F^-\\
   \hline \hline T^+ & T^+ 	  & T^+ 	& T^+ 	& T^+ \\
 		  \hline C^+ & T^+ 	  & C^+ 	& C^+ 	& C^+ \\
 		  \hline C^- & T^+ 	  & C^+ 	& C^- 	& C^- \\
 		  \hline C^- & T^+ 	  & C^+ 	& C^- 	& F^- \\	
 		  \hline
\end{array}
\end{array}
\end{displaymath}

\

\noindent The Nmatrix obtained from the one for {\bf Tm} by changing the original multiperator for $\to$ by the (deterministic) operator above was already considered by Ivlev under the name of $S_{\bar a}$. He proposes the following additional axiom to characterize this system:

$$
\begin{array}{ll}
\Kdet & \Box (\alpha \to \beta) \to (\Diamond\alpha \to \Box\beta)
\end{array}$$

\begin{definition}
Let {\bf Tmd} be the Hilbert calculus obtained from {\bf Tm} by changing axiom {\em \axKu} by {\em \Kdet}.\footnote{The letter `d' stands for `deterministic implication/disjunction'.}
\end{definition}

\begin{proposition}
Axiom {\em \axKu}\ is derivable in  {\bf Tmd}.
\end{proposition}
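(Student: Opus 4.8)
The plan is to exploit the fact that the consequent of \Kdet\ is \emph{stronger} than that of \axKu: where \axKu\ concludes $\Diamond\beta$, the axiom \Kdet\ concludes $\Box\beta$. Since {\bf Tmd} is obtained from {\bf Tm} by trading \axKu\ for \Kdet\ while keeping all the remaining axioms of {\bf Tm} (in particular \axT), and since the instance $\Box\beta \to \Diamond\beta$ of \axD\ is already a theorem of {\bf Tm}, it suffices to bridge the gap from $\Box\beta$ to $\Diamond\beta$ and then to paste this together with \Kdet\ by pure propositional reasoning.

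First I would show that $\vdash_{\bf Tmd} \Box\beta \to \Diamond\beta$ for every formula $\beta$. Instantiating \axT\ at $\neg\beta$ gives $\Box\neg\beta \to \neg\beta$; by contraposition in {\bf PC} this yields $\beta \to \neg\Box\neg\beta$, that is, $\beta \to \Diamond\beta$ (recalling that $\Diamond\beta$ abbreviates $\neg\Box\neg\beta$). Combining this with the instance $\Box\beta \to \beta$ of \axT, by transitivity of implication in {\bf PC}, gives $\Box\beta \to \Diamond\beta$.

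Next, from the instance $\Box(\alpha\to\beta)\to(\Diamond\alpha\to\Box\beta)$ of \Kdet\ together with the theorem $\Box\beta \to \Diamond\beta$ just obtained, a purely propositional argument delivers $\Box(\alpha\to\beta)\to(\Diamond\alpha\to\Diamond\beta)$. One can do this directly via a suitable {\bf PC}-tautology, or via the Deduction Metatheorem (which holds for {\bf Tmd} by the same argument as for {\bf Km}, since {\bf Tmd} is an axiomatic extension of {\bf PC} with \MP\ as the only inference rule): assuming $\Box(\alpha\to\beta)$ and $\Diamond\alpha$ as hypotheses, two applications of \MP\ with \Kdet\ give $\Box\beta$, and one more application of \MP\ with $\Box\beta\to\Diamond\beta$ gives $\Diamond\beta$; discharging the two hypotheses yields the desired formula. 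But that formula is exactly $K1(\alpha,\beta)$, the instance of \axKu\ for $\alpha$ and $\beta$; since $\alpha$ and $\beta$ were arbitrary, \axKu\ is derivable in {\bf Tmd}.

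I do not expect any real obstacle here; the only point needing a moment's care is that $\beta\to\Diamond\beta$ must be obtained from \axT\ alone, since \axT\ is stated only in the one-directional form $\Box\alpha\to\alpha$ — this is handled by instantiating at $\neg\beta$ and contraposing, as above. Everything else is routine propositional bookkeeping.
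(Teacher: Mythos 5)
Your proof is correct and follows essentially the same route as the paper: establish the schema $\Box\beta \to \Diamond\beta$ and then obtain $K1(\alpha,\beta)$ from \Kdet\ by purely propositional reasoning. The only difference is cosmetic — the paper cites the derivability of $\Box\alpha\to\Diamond\alpha$ in ${\bf PC}\cup\{\axT,\axDNu,\axDNd\}$ from earlier work, whereas you rederive it directly from \axT\ by instantiating at $\neg\beta$ and contraposing, which is fine since $\Diamond$ is notation for $\neg\Box\neg$.
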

\begin{proof}
It follows form the fact that the schema $\Box \alpha \to \Diamond\alpha$ is derivable in ${\bf PC} \cup \{\axT, \axDNu, \axDNd\}$ (see~\cite{con:cer:per:17}). 
\end{proof}

From the last result, it follows that {\bf Tmd} is equivalent to adding axiom \Kdet\ to  {\bf Tm}.

\begin{theorem} \label{Tmd-comp}
System {\bf Tmd} is sound and complete w.r.t. the Nmatrix for  {\bf Tm} in which the original multiperator for $\to$ is replaced by the deterministic operator of the truth-table above.\footnote{That is, the Nmatrix of Ivlev's  $S_{\bar a}$. See \cite{ivl:88}.} 
\end{theorem}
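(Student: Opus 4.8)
The plan is to replay, in the four-valued setting of {\bf Tm}, the two arguments used for Theorems~\ref{SoundKm} and~\ref{Km-compl}, the only novelties being the deterministic multioperator for $\to$ and the axiom \Kdet. For soundness, I would first check that the deterministic $\to$-table respects the classical truth-tables on $\{+,-\}$ (that is, $+\to+\subseteq+$, $+\to-\subseteq-$, $-\to+\subseteq+$ and $-\to-\subseteq+$), which is immediate by inspection and already gives validity of every instance of a {\bf PC} tautology together with the fact that \MP\ preserves designated values. Then I would verify, one by one and by contradiction exactly as in the proof of Theorem~\ref{SoundKm}, that each modal axiom of {\bf Tmd}, namely \axK, \axKd, \axMu, \axMd, \axMt, \axMc, \axT, \axDNu, \axDNd\ and \Kdet, is valid in the modified Nmatrix; since the modal multioperators of {\bf Tm} are untouched, most of these go through as before, and the only genuinely new case is \Kdet: if $v(\Box(\alpha\to\beta))\in+$ then $v(\alpha\to\beta)=T^+$, and if moreover $v(\Diamond\alpha)\in+$ then $v(\alpha)\in\{T^+,C^+,C^-\}$, so the only entries of the deterministic table equal to $T^+$ with such a first coordinate force $v(\beta)=T^+$, whence $v(\Box\beta)\in+$. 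Soundness of derivations then follows by induction on their length.

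For completeness I would mimic the canonical-model construction of Section~\ref{K-compl}. Since {\bf Tmd} is an axiomatic extension of {\bf PC} with \MP\ as the sole rule, the Deduction Metatheorem, the analogues of Lemma~\ref{A}, and the Lindenbaum-\L os theorem (Theorem~\ref{L-A-lemma}) transfer verbatim; note that \axDNu, \axDNd, \axT\ belong to {\bf Tmd}, as does, by the Proposition preceding this theorem, the derived axiom \axKu\ and the schema $\Box\alpha\to\Diamond\alpha$. Given $\Gamma\nvdash_{\bf Tmd}\alpha$, I would take an $\alpha$-saturated $\Delta\supseteq\Gamma$ and define a canonical valuation $\mathbb{V}_\Delta: For\to\{T^+,C^+,C^-,F^-\}$, namely the four-valued analogue of Definition~\ref{Km-can} obtained by deleting clause~(6); equivalently, $\mathbb{V}_\Delta(\beta)\in+$ iff $\beta\in\Delta$, $\mathbb{V}_\Delta(\beta)=T^+$ iff $\Box\beta\in\Delta$, $\mathbb{V}_\Delta(\beta)=F^-$ iff $\Box\neg\beta\in\Delta$, and $\mathbb{V}_\Delta(\beta)\in\{C^+,C^-\}$ otherwise. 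Well-definedness uses that exactly one of $\beta,\neg\beta$ lies in $\Delta$ together with \axT\ and the derived schema $\Box\alpha\to\Diamond\alpha$ (so $\Box\beta$ and $\Box\neg\beta$ are never both in $\Delta$). Before the main lemma I would record, in the style of Lemma~\ref{B}, the {\bf Tmd}-derivable facts to be used: $\Diamond\neg\beta\vdash_{\bf Tmd}\Diamond(\beta\to\gamma)$ and $\Diamond\gamma\vdash_{\bf Tmd}\Diamond(\beta\to\gamma)$ (from \axMc, \axMt); $\Box\gamma\vdash_{\bf Tmd}\Box(\beta\to\gamma)$ and $\Box\neg\beta\vdash_{\bf Tmd}\Box(\beta\to\gamma)$ (from \axMd, \axMu\ and Lemma~\ref{A}); $\Box\beta,\Box\neg\gamma\vdash_{\bf Tmd}\Box\neg(\beta\to\gamma)$ (from \axKd\ and Lemma~\ref{A}); and the crucial $\Box(\beta\to\gamma),\Diamond\beta\vdash_{\bf Tmd}\Box\gamma$ (directly from \Kdet).

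The heart of the argument is to show that $\mathbb{V}_\Delta$ is a {\bf Tmd}-valuation. The cases for $\neg$ and $\Box$ are as in Lemma~\ref{Km-matrix}. For $\beta\to\gamma$ the designation of $\mathbb{V}_\Delta(\beta\to\gamma)$ is automatically correct, since the table is classical on $\{+,-\}$ and $\beta\to\gamma\in\Delta$ iff $\beta\notin\Delta$ or $\gamma\in\Delta$; so the work is to pin down the modal class of $\mathbb{V}_\Delta(\beta\to\gamma)$ in each of the sixteen combinations of values of $\beta$ and $\gamma$. When $\beta\in F$ one uses $\Box\neg\beta\in\Delta$ to obtain $\Box(\beta\to\gamma)\in\Delta$ (class $T$), and similarly when $\gamma\in T$ via $\Box\gamma\in\Delta$; the remaining ``contingent'' entries, which are exactly the ones where the deterministic table is strictly more informative than the one for {\bf Tm} (the $\mathbb{C}\to\mathbb{C}$ entries and $\langle C^-,F^-\rangle$), are handled by the last helper fact: if $\mathbb{V}_\Delta(\beta)\in\{T^+,C^+,C^-\}$ then $\Diamond\beta\in\Delta$, so $\Box(\beta\to\gamma)\in\Delta$ would force $\Box\gamma\in\Delta$ by \Kdet, contradicting $\mathbb{V}_\Delta(\gamma)\in\{C^+,C^-,F^-\}$; hence $\Box(\beta\to\gamma)\notin\Delta$, while $\Diamond(\beta\to\gamma)\in\Delta$ follows from \axMt\ or \axMc, so $\Box\neg(\beta\to\gamma)\notin\Delta$ and the value lies in $\{C^+,C^-\}$, the designation being fixed by \axT\ and {\bf PC} reasoning about $\Delta$. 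Once $\mathbb{V}_\Delta$ is known to be a {\bf Tmd}-valuation, the proof closes exactly as in Theorem~\ref{Km-compl}: $\mathbb{V}_\Delta$ makes every formula of $\Gamma$ designated but not $\alpha$, so $\Gamma\nvDash\alpha$.

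The main obstacle I anticipate is not conceptual but the bookkeeping of the sixteen-case verification for $\to$: in each case one must extract from $\Delta$ exactly the membership facts needed and invoke the right axiom, and in particular one must make sure \Kdet\ (rather than the weaker derived \axKu) is used wherever the deterministic table refines the {\bf Tm} one, and check carefully the entry $\langle C^-,F^-\rangle\mapsto C^+$, where the designation differs from {\bf Tm}, coming out via \axMc\ while the class comes out via \Kdet.
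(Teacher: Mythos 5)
Your proposal is correct and takes essentially the same route as the paper: soundness reduces to checking \Kdet\ (the paper shortcuts the remaining axioms by observing that every valuation over the modified Nmatrix is already a {\bf Tm}-valuation), and completeness adapts the canonical $\alpha$-saturated valuation, the genuinely new cases being exactly the entries where the {\bf Tm} multioperator allowed $\{T^+,C^+\}$, namely $\langle C^+,C^+\rangle$, $\langle C^-,C^+\rangle$ and $\langle C^-,C^-\rangle$, all settled, as you do, by using \Kdet\ to rule out $\Box(\beta\to\gamma)\in\Delta$. Note only that $\langle C^-,F^-\rangle$ is not actually a point where the tables diverge (the $\{C^-\}$ printed in the {\bf Tm} table is a typo for $\{C^+\}$, as the six- and eight-valued tables confirm), so your uniform \Kdet-plus-\axMt/\axMc\ treatment of that entry merely re-verifies a case already covered by the {\bf Tm} completeness proof.
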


\begin{proof} 
For soundness, it worth noting that the Nmatrix for {\bf Tmd} restrains the  Nmatrix for {\bf Tm}. This means that any valuation over the Nmatrix for {\bf Tmd} is a valuation over the Nmatrix for {\bf Tm}. Hence, for {\bf Tm}-soundness proved in
 \cite{con:cer:per:15} and \cite{con:cer:per:17}, we have to check only the axiom \Kdet. Thus, let $\alpha,\beta \in For$ and let $v$ be a valuation  over the Nmatrix for {\bf Tmd} such that  $v(\Box(\alpha \to \beta))$ and $v(\Diamond \alpha)$ are designated. Then  $v(\alpha \to \beta) = T^+$ and $v(\alpha)  \neq F^-$. Thus, $v(\beta) = T^+$ and $(v \Box \beta)$ is designated. We conclude that $v(Kdet)$ is
designated too.

For completeness, note that Lemma~\ref{A} holds good for {\bf Tmd}. We have to make some adjustment in the proof of \cite[Lemma~4]{con:cer:per:15} by changing, for instance, the values $t^n$, $t^c$, $f^c$ and $f^i$ by $T^+$, $C^+$,$C^-$ and $F^-$, respectively. In addition, $\Delta$ is now an $\alpha$-saturated set containing $\Gamma$ such that $\Gamma \nvdash_{\bf Tmd} \alpha$.   Other adjustment that must be made is to eliminate the rule (DN), what is was  done in~\cite{con:cer:per:17}.
Now, three new cases must be considered:
\begin{enumerate}[(i)]
  \item If $\mathbb{V}_\Delta(\beta) = C^-$ and $\mathbb{V}_\Delta(\gamma) = C^+$, then $\Diamond \beta, \Diamond \neg \gamma, \neg \beta, \gamma \in \Delta$. If $\Box(\beta \to \gamma) \in \Delta$, by \Kdet\ and Lemma~\ref{A}~(i) we would have $\Box \gamma, \neg \Box \gamma \in \Delta$ and $\Delta$ would be inconsistent, an absurd. By {\bf PC}, Lemma~\ref{A}~(i) and the fact that $\Delta$ is $\alpha$-saturated, we have that $\mathbb{V}_\Delta(\beta \to \gamma)  = C^+$.
  \item If $\mathbb{V}_\Delta(\gamma) = \mathbb{V}_\Delta(\beta) = C^-$, then $\Diamond \beta, \Diamond \gamma, \neg \beta, \neg \gamma \in \Delta$.  If $\Box(\beta \to \gamma) \in \Delta$, by \Kdet\ and \axT\ we would have $\gamma \in \Delta$ and $\Delta$ would be inconsistent. By the same reasons, we have $\mathbb{V}_\Delta(\beta \to \gamma)  = C^+$.
    \item If  $\mathbb{V}_\Delta(\gamma) = \mathbb{V}_\Delta(\beta) = C^+$, then $\Diamond \neg \beta, \Diamond \neg \gamma, \beta, \gamma \in \Delta$. Suppose 
    $\Box(\beta \to \gamma) \in \Delta$. If $\neg \Diamond \beta \in \Delta$, by Lemma~\ref{A}~(iii)  and \axT, we would have $\neg \beta \in \Delta$ and $\Delta$
    would be inconsistent,  an absurd. Thus $\Diamond \beta \in \Delta$. In this case, by \Kdet\ and Lemma~\ref{A}~(i), we will have $\Box \gamma \in \Delta$. Again, by the same reasons as above, we conclude that $\mathbb{V}_\Delta(\beta \to \gamma)  = C^+$.
\end{enumerate}

\end{proof}

It is easy to see that axiom ($\Delta2$) holds in {\bf Tmd}, hence it satisfies all the axioms for a $\Delta$ operator with the exception of axiom ($\Delta4$). This justifies  the definition of the following system:

\begin{definition}
Let {\bf Tm4d} be the Hilbert calculus obtained from {\bf Tmd} by adding schema axiom {\em \axfour}. Equivalently, {\bf Tm4d} is obtained from {\bf Tm4} by changing axiom {\em \axKu} by {\em \Kdet}.
\end{definition}

\begin{theorem}
System {\bf Tm4d} is sound and complete w.r.t. the Nmatrix for  {\bf T4m} in which the original multiperator for $\to$ is replaced by the deterministic operator $\to$ .
\end{theorem}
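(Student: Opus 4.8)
The plan is to follow the proof of Theorem~\ref{Tmd-comp} almost verbatim, exploiting the observation that the Nmatrix for {\bf Tm4d} is obtained from that for {\bf Tmd} by shrinking only the modal multioperators: now $\Box\,T^+=\{T^+\}$ (and hence, since $\Diamond\alpha=\neg\Box\neg\alpha$, also $\Diamond\,F^-=\{F^-\}$), while every other entry of $\Box$, $\Diamond$, $\neg$ and $\to$ is exactly as in {\bf Tmd}. Consequently every valuation over the {\bf Tm4d}-Nmatrix is also a valuation over the {\bf Tmd}-Nmatrix, and the multioperators for $\neg$ and $\to$ are common to both.

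\medskip\noindent\emph{Soundness.} Since {\bf Tm4d} is {\bf Tmd} together with the single extra schema \axfour, it suffices to combine the soundness direction of Theorem~\ref{Tmd-comp} — every axiom of {\bf Tmd} is valid in the {\bf Tm4d}-Nmatrix because a {\bf Tm4d}-valuation is in particular a {\bf Tmd}-valuation — with a direct check of \axfour: if $v(\Box\alpha)$ is designated then, as the $\Box$-multioperator produces a designated value only at the argument $T^+$, we get $v(\alpha)=T^+$, hence $v(\Box\alpha)=T^+$ and $v(\Box\Box\alpha)=T^+$ is designated. Since \MP\ preserves designated values, an induction on the length of {\bf Tm4d}-derivations yields that $\Gamma\vdash_{\bf Tm4d}\alpha$ implies $\Gamma\vDash_{\bf Tm4d}\alpha$.

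\medskip\noindent\emph{Completeness.} Assume $\Gamma\nvdash_{\bf Tm4d}\alpha$. As {\bf Tm4d} is a Hilbert calculus with \MP\ as its only inference rule it is Tarskian and finitary, so by Theorem~\ref{L-A-lemma} there is an $\alpha$-saturated set $\Delta\supseteq\Gamma$, and $\alpha\notin\Delta$. Lemma~\ref{A} still holds (the calculus has only grown), so the four-valued canonical valuation $\mathbb{V}_\Delta$, defined exactly as in the proof of Theorem~\ref{Tmd-comp}, is well defined. It remains to check that $\mathbb{V}_\Delta$ is a {\bf Tm4d}-valuation. The clauses for $\neg$ and for $\to$ — including the three extra cases for the deterministic implication treated in the proof of Theorem~\ref{Tmd-comp} — carry over unchanged, since they appeal only to {\bf PC}, to Lemma~\ref{A}, to \axT, to \Kdet\ and to closure of $\Delta$, none of which refers to $\Box$ returning all of $+$ at $T^+$. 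Only the $\Box$-clause needs one addition: when $\mathbb{V}_\Delta(\beta)=T^+$ we have $\Box\beta,\Diamond\beta\in\Delta$, whence $\Box\Box\beta\in\Delta$ by \axfour; and since the schema $\Box\gamma\to\Diamond\gamma$ is derivable in ${\bf PC}\cup\{\axT,\axDNu,\axDNd\}$ (recalled in Section~\ref{sectDelta}) and hence in {\bf Tm4d}, also $\Diamond\Box\beta\in\Delta$, so $\mathbb{V}_\Delta(\Box\beta)=T^+\in\{T^+\}=\Box(T^+)$, as demanded by the modified operator; the subcases $\mathbb{V}_\Delta(\beta)\in\{C^+,C^-,F^-\}$ are literally those of {\bf Tmd}, where $\Box$ still returns all of $-$. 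Thus $\mathbb{V}_\Delta$ is a {\bf Tm4d}-valuation with $\mathbb{V}_\Delta[\Gamma]\subseteq+$ and $\mathbb{V}_\Delta(\alpha)\notin+$, so $\Gamma\nvDash_{\bf Tm4d}\alpha$, which is the contrapositive of the claim.

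\medskip\noindent The one point deserving real care — the \emph{main obstacle} — is the assertion that no $\neg$- or $\to$-case of the ``canonical valuation is a valuation'' lemma secretly uses the larger {\bf Tmd} multioperator for $\Box$, i.e.\ that restricting $\Box$ at $T^+$ (and $\Diamond$ at $F^-$) breaks none of those steps. Since each such case is phrased purely in terms of which modal formulas lie in $\Delta$ together with the fixed deterministic tables for $\neg$ and $\to$, and $\Delta$ is closed under a system extending {\bf Tmd}, they all go through; but this is exactly where the detailed write-up must be verified line by line. Everything else is routine bookkeeping parallel to the proof of Theorem~\ref{Tmd-comp}.
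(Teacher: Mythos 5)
Your proposal is correct and follows essentially the same route as the paper, which simply reduces the theorem to Theorem~\ref{Tmd-comp} plus the extra case $\alpha=\Box\beta$ (delegated there to an earlier paper's Theorem~17). You merely spell out explicitly the soundness check for \axfour\ and the canonical-valuation case $\mathbb{V}_\Delta(\beta)=T^+$ that the paper handles by citation, and those details are right.
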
 

\begin{proof}
	We just have to add in Theorem \ref{Tmd-comp} the case when $\alpha =\Box \beta$. This case, in fact, was checked in \cite[Theorem~17]{con:cer:per:15}.
\end{proof}
 
Clearly, the logic {\bf Tm4d} satisfies all the axioms for a $\Delta$ operator. This being so, $\Box$ is a kind of non-deterministic $\Delta$ operator, in which  the necessation rule does not hold. It is worth noting that $\Box$ is the only non-deterministic connective of  {\bf Tm4d}. This means that the extension of {\bf Tm4d} obtained by adding schema axiom \axfive, namely, the system {\bf Tm45d}, is a system characterized by a standard logical matrix, since $\Box$ is also deterministic. By re-interpreting its truth-values $T^+$, $C^+$, $C^-$, $F^-$ as $1$, $2/3$, $1/3$ and $0$, respectively, and by observing that $\to$ and $\vee$ are inter-definable in {\bf Tm4d} (hence in {\bf Tm45d}), the logical matrix of  {\bf Tm45d} corresponds to the $\{\neg,\vee\}$-fragment of 4-valued \L ukasiewicz logic $\L4$ expanded with this operator simular (but weaker) to Monteiro-Baaz $\Delta$, and in which $\{1,2/3\}$ are designated. As we shall see in the next section, {\bf Tm45d} is the only modal system presented in this paper which is characterizable by a single finite deterministic matrix.

\section{The inevitability of non-determinism} \label{SectDugun}

In 1940, Dugundji showed that no modal system between {\bf S1} and {\bf S5} can be characterized by a single finite logical matrix. Since then, several generalizations of that theorem were obtained, showing that practically no modal system considered in the literature can be characterized by  a single finite logical matrix.  On the other hand, each of the systems considered in this paper (with exception of {\bf Tm45d}, mentioned at the end of the previous section) is characterized by a single finite-valued non-deterministic matrix. A natural question is to determine whether non-deterministic matrices are intrinsically necessary in order to characterize such systems, or if its possible to characterize (some of) them by means of a  single finite (ordinary) logical matrix.  

The aim of this section is showing that the above question has a negative answer. Indeed, two Dugundji-like theorems will be obtained (Theorems~\ref{Dug1} and~\ref{Dug2}),  proving that none of the systems considered in the previous sections (with exception of {\bf Tm45d}) can be characterized by a a single finite logical matrix. These results  justify the use of non-deterministic matrices as a semantic tool for dealing with them.

To begin with, some notation will be introduced. If $\Gamma=\{\alpha_1,\ldots, \alpha_n \}$ is a finite family of $n$ formulas in $For$, with $n\geq 2$, then $\bigvee \Gamma$ will denote the formula $((\ldots ((\alpha _1\vee \alpha _2) \vee \alpha _3) \vee \ldots) \vee \alpha_n)$.

\begin{definition}
Let $n \geq 3$ be a natural number, and let $p_1, \ldots p_n$  be $n$ different propositional variables. We define the following formula schema:
$$\delta(n) = \bigvee \{(\Box \alpha(n) \to \Box \beta_i(n))  \ : \ 1 \leq i \leq n\}$$
where $\alpha(n)=\bigvee \{ p_j \ : \ 1 \leq j \leq n\}$ and $\beta_i(n)=\bigvee\{ p_j \ : \ 1 \leq j \leq n, \ j\neq i\}$ for every $1 \leq i \leq n$.
\end{definition}

For instance, if $n=3$ then:

$$
\begin{array}{l l}
	\delta(3) = 	& \big((\Box ((p_1 \vee p_2) \vee p_3)\to \Box(p_2 \vee p_3)) \vee \\
					& (\Box ((p_1 \vee p_2) \vee p_3)\to \Box(p_1 \vee p_3))\big) \vee \\
					& (\Box ((p_1 \vee p_2) \vee p_3)\to \Box(p_1 \vee p_2)).
\end{array}
$$

\begin{proposition} \label{M-validates-delta} Let $\mathcal{M}=\langle M,D\rangle$ be a deterministic logical matrix such that $M$ has $n$ elements, and $\mathcal{M}$ is a model of {\bf Km}. Then, $\mathcal{M}$ validates the formula  $\delta(n+1)$.
\end{proposition}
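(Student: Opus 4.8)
The plan is to exploit a pigeonhole argument on the finitely many truth-values, exactly as in Dugundji's original proof and its generalizations. Let $\mathcal{M}=\langle M, D\rangle$ be a deterministic logical matrix with $|M|=n$ which is a model of {\bf Km} (so every instance of every axiom of {\bf Km} is always designated, and {\sf MP} preserves designation). Fix $n+1$ propositional variables $p_1,\dots,p_{n+1}$ and an arbitrary valuation $v$ over $\mathcal{M}$. Since there are $n+1$ variables and only $n$ truth-values, by the pigeonhole principle there exist indices $k\neq \ell$ with $v(p_k)=v(p_\ell)$. The key observation is that, because $\vee$ is interpreted by a (deterministic) binary operation on $M$, the value $v(\alpha(n+1))$ of the full disjunction equals $v(\beta_i(n+1))$ whenever $i$ is an index such that dropping $p_i$ does not remove the ``last surviving copy'' of the repeated value — concretely, for $i=k$ we have that $\beta_k(n+1)$ still contains $p_\ell$ (and vice versa), so $v(\alpha(n+1))=v(\beta_k(n+1))$: the disjunct $p_k$ that was removed is ``absorbed'' since $p_\ell$ carries the same value. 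Here I would first record the elementary algebraic fact that in any model of {\bf PC}, if $v(p_k)=v(p_\ell)$ with $k\neq\ell$ then $v(\alpha(n+1))=v(\beta_k(n+1))$, which follows from commutativity, associativity and idempotency of $\vee$ holding up to logical equivalence in {\bf PC}, hence holding as genuine equalities of truth-values in any {\bf PC}-matrix (since $\varphi\equiv\psi$ in {\bf PC} forces $v(\varphi)=v(\psi)$ in any matrix model, as $\varphi\to\psi$ and $\psi\to\varphi$ are both designated and the $\to$-table over $D$ versus $M\setminus D$ is classical).

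Given this, $v(\Box\alpha(n+1))=v(\Box\beta_k(n+1))$, because $\Box$ is a function of the value of its argument. Therefore $v(\Box\alpha(n+1)\to\Box\beta_k(n+1)) = v(\Box\beta_k(n+1)\to\Box\beta_k(n+1))$. Now I would invoke that $\gamma\to\gamma$ is a {\bf PC}-theorem, hence designated under $v$ in the matrix model $\mathcal{M}$; so the $k$-th disjunct of $\delta(n+1)$ is designated under $v$. Finally, since disjunction is defined as $\varphi\vee\psi:=\neg\varphi\to\psi$ and the induced operation on the two-element quotient $\{D, M\setminus D\}$ is the classical ``or'' (one designated disjunct makes the whole disjunction designated — this is exactly the classical behaviour of $\neg$ and $\to$ on $\{+,-\}$ that holds in any {\bf PC}-model), the whole formula $\delta(n+1)=\bigvee\{(\Box\alpha(n+1)\to\Box\beta_i(n+1)) : 1\le i\le n+1\}$ is designated under $v$. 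Since $v$ was arbitrary, $\mathcal{M}$ validates $\delta(n+1)$.

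The main obstacle — really the only non-routine point — is the bookkeeping around the ``absorption'' equality $v(\alpha(n+1))=v(\beta_k(n+1))$: one must be careful that it is a genuine equality of truth-values and not merely a logical equivalence, and this requires explicitly noting that in any matrix that is a model of {\bf PC}, logically equivalent formulas receive literally the same value (because the $\to$-operation, restricted to the partition into designated/non-designated, is the classical conditional, so $v(\varphi)\in D \iff v(\psi)\in D$, and then one needs the slightly stronger statement that associativity/commutativity/idempotency of the defined $\vee$ hold on the nose). In fact the cleanest route is: prove once and for all that for any {\bf PC}-model $\mathcal{M}$ and any formulas with $\varphi\equiv_{\bf PC}\psi$, and any context $C[\,\cdot\,]$ built only from $\neg,\to$, one has $v(C[\varphi])=v(C[\psi])$ — equivalently, the Lindenbaum–Tarski quotient by {\bf PC}-equivalence is a quotient of $\mathcal{M}$ restricted to the classical connectives — and then apply it with $C[\cdot]=\Box(\cdot)$ after first establishing the equivalence $\alpha(n+1)\equiv_{\bf PC}\beta_k(n+1)$ under the hypothesis $p_k\leftrightarrow p_\ell$; but here we do not even need the hypothesis inside the logic, since $v(p_k)$ and $v(p_\ell)$ are literally equal, so $\alpha(n+1)$ and $\beta_k(n+1)$ are built by $\vee$ from the same multiset of values and hence evaluate equally by idempotency/commutativity/associativity of $\vee$ in $\mathcal{M}$. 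I expect the rest — designation of $\gamma\to\gamma$, and the classical ``or'' behaviour of the defined disjunction on designated values — to be immediate from the fact, already used in the proof of Theorem~\ref{SoundKm}, that every model of {\bf Km} respects the classical tables of $\neg$ and $\to$ on the two-block partition $\{+,-\}$.
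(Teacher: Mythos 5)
Your strategy is the same as the paper's: pigeonhole on the $n$ values, the literal identity $v(\alpha(n+1))=v(\beta_k(n+1))$, hence $v(\Box\alpha(n+1))=v(\Box\beta_k(n+1))$, designation of that disjunct via the theorem $\gamma\to\gamma$, and designation of the whole formula via disjunction introduction. But the lemma you lean on to secure the key identity --- that $\varphi\equiv_{\bf PC}\psi$ forces $v(\varphi)=v(\psi)$ in every matrix model, so that the defined $\vee$ is commutative, associative and idempotent \emph{as an operation on $M$} --- is false. Logical equivalence only gives $v(\varphi)$ and $v(\psi)$ the same designation \emph{status}, not the same value. Concretely, take $M=\{0,\tfrac12,1\}$, $D=\{\tfrac12,1\}$, $\neg x=0$ if $x\in D$ and $\neg x=1$ otherwise, $x\to y=1$ if $x\notin D$ or $y\in D$ and $x\to y=0$ otherwise, and $\Box x=1$ for all $x$ (so $\Diamond x=0$). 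Every axiom of {\bf Km} is designated under every valuation and \MP\ preserves designation, so this is a model of {\bf Km}; yet for $v(p)=\tfrac12$ one has $v(\neg\neg p)=1\neq v(p)$, and $\tfrac12\vee\tfrac12=\neg\tfrac12\to\tfrac12=1\neq\tfrac12$, so the induced $\vee$ is not idempotent on values and your ``on the nose'' ACI claim fails. This matters precisely because $\Box$ acts on actual values and may separate two designated values: same designation status of $\alpha(n+1)$ and $\beta_k(n+1)$ is not enough, you genuinely need equality of values, and your justification of it collapses.

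For comparison, the paper does not take your detour through {\bf PC}-equivalence at all: it reads the identity $h(\alpha(n+1))=h(\beta_i(n+1))$ off the truth-functional character of the valuation (when $\beta_i(n+1)$ is evaluated, the omitted value $h(p_i)$ is still supplied by $h(p_k)$), i.e.\ it treats the value of the iterated disjunction as a function of the values of its disjuncts only. So you correctly isolated the one non-routine point of the argument, but the repair you propose (a Lindenbaum--Tarski-style ``equivalent formulas receive equal values'') is unsound and cannot be invoked; any justification of the identity has to argue about the term operation $\vee$ on $M$ itself, not about provable equivalence. A smaller remark on your last step: rather than asserting that an arbitrary model behaves classically on the two-block partition $\{D,M\setminus D\}$, it is safer to argue as the paper does, using that $\beta\to(\beta\vee\gamma)$ and $\gamma\to(\beta\vee\gamma)$ are theorems of {\bf Km} together with preservation of designation under \MP; that part of your proposal is otherwise in line with the paper.
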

\begin{proof}
Let $\mathcal{M}=\langle M,D\rangle$ be an $n$-valued  logical matrix which is a model of {\bf Km}, and let $h$ be a valuation over $\mathcal{M}$. Since $M$ has just $n$ values, there exists $1\leq i,k\leq n+1$ such that $i\neq k$ but $h(p_i) = h(p_k)$. From this, $h(\alpha(n+1))=h(\beta_i(n+1))$ since $h$ is a valuation over a logical matrix, and  $h(p_i) = h(p_k)$ by hypotesis (therefore, when evaluating  $h(\beta_i(n+1))$  the ``missing'' value  $h(p_i)$ is retrieved with $h(p_k)$). Using once again the fact that $h$ is a valuation over a  logical matrix, it follows that $h(\Box \alpha(n+1))= \Box h(\alpha(n+1))= \Box  h(\beta_i(n+1)) = h(\Box \beta_i(n+1))$. Thus, $h(\Box \alpha(n+1) \to \Box \beta_i(n+1)) = h(\Box \alpha(n+1)) \to h(\Box \beta_i(n+1))$ belongs to $D$, given that $\beta \to \beta$ is a theorem of {\bf Km} for every formula $\beta$. Using that $\beta \to (\beta \vee \gamma)$ and $\gamma \to (\beta \vee \gamma)$ are  theorems of {\bf Km} for every formula $\beta$ and $\gamma$, it follows that $h(\beta \to (\beta \vee \gamma)) \in D$ and $h(\gamma \to (\beta \vee \gamma)) \in D$. Hence, $h(\delta(n+1)) \in D$. Since the latter holds for every valuation $h$, the result follows.
\end{proof}

\

\begin{proposition} \label{prop-delta-not-valid}
The formula $\delta(n)$ is not a theorem of {\bf T45m}, for every  $n \geq 3$.
\end{proposition}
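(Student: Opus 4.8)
The plan is to refute the formula $\delta(n)$ semantically in the four-valued Nmatrix $\mathcal{M}_{\bf T45m}$ of {\bf T45m} and then to invoke soundness of {\bf T45m} with respect to $\mathcal{M}_{\bf T45m}$: once we exhibit a valuation $v$ with $v(\delta(n))$ non-designated, it follows that $\nvDash_{\bf T45m}\delta(n)$, hence $\nvdash_{\bf T45m}\delta(n)$.

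First I would record two facts about $\mathcal{M}_{\bf T45m}$. (a) The operator $\Box$ is deterministic, with $\Box T^+ = T^+$ and $\Box x = F^-$ for $x \in \{C^+,C^-,F^-\}$; since moreover $T^+ \to F^- = \{F^-\}$, any valuation $v$ satisfies $v(\Box\mu \to \Box\nu) = F^-$ whenever $v(\mu) = T^+$ and $v(\nu) \neq T^+$. (b) The induced disjunction has $C^+ \vee C^+ = \{T^+,C^+\}$, so a valuation may send a disjunction of two subformulas valued $C^+$ to either $T^+$ or $C^+$; on the other hand the disjunction of two non-designated values is always non-designated (in particular $F^- \vee F^- = \{F^-\}$), so $\bigvee\Gamma$ is non-designated as soon as every member of $\Gamma$ is.

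Now fix distinct variables $p_1,\dots,p_n$ and write $d_1 := p_1$ and $d_k := d_{k-1} \vee p_k$ for $2 \le k \le n$, so that $\alpha(n) = d_n$ and $\beta_n(n) = d_{n-1}$. Define a valuation $v$ by: $v(p_j) = C^+$ for $1 \le j \le n$; $v(d_k) = C^+$ for $1 \le k \le n-1$; $v(\alpha(n)) = v(d_n) = T^+$; $v(D) = C^+$ for every subformula $D$ of the form $\mu \vee \nu$ occurring in some $\beta_i(n)$ but not equal to any $d_k$; and $v$ arbitrary (subject to the Nmatrix constraints) on all remaining formulas. Each prescribed value lies in the relevant multioperation: each such $D$, being left-associated, has the form $D' \vee p_j$ with $v(D') = v(p_j) = C^+$, so $C^+ \in C^+ \vee C^+$ is available, and likewise $T^+ \in v(d_{n-1}) \vee v(p_n) = C^+ \vee C^+$ for $\alpha(n)$. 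The prescription is consistent because the only subformulas of the form $\mu \vee \nu$ that $\alpha(n)$ has in common with the formulas $\beta_i(n)$ are the nested initial segments $d_1 \subsetneq \cdots \subsetneq d_{n-1}$, all of which receive $C^+$ --- note that the demand $v(\alpha(n)) = T^+$ constrains only the node $d_n$, not $d_{n-1}$. By construction $v(\alpha(n)) = T^+$ while $v(\beta_i(n)) = C^+ \neq T^+$ for every $1 \le i \le n$; this is where the hypothesis $n \ge 3$ enters, ensuring that each $\beta_i(n)$ really is a disjunction of at least two of the variables valued $C^+$.

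It then remains to compute $v(\delta(n))$. By fact (a), $v(\Box\alpha(n) \to \Box\beta_i(n)) = F^-$ for every $i$, and so by fact (b) the disjunction $\delta(n) = \bigvee\{\Box\alpha(n) \to \Box\beta_i(n) : 1 \le i \le n\}$ receives the non-designated value $F^-$ under $v$. Hence $v$ witnesses $\nvDash_{\bf T45m}\delta(n)$, and soundness of {\bf T45m} yields $\nvdash_{\bf T45m}\delta(n)$, as desired. The only delicate point --- and the only place the argument can go wrong --- is the consistency of the choices defining $v$: one must verify that the subformulas of the form $\mu \vee \nu$ shared between $\alpha(n)$ and the various $\beta_i(n)$ are exactly the chain $d_1 \subsetneq \cdots \subsetneq d_{n-1}$, so that assigning all of them the value $C^+$ does not clash with the requirement $v(\alpha(n)) = T^+$ imposed at the single top node $d_n$.
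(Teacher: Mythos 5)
Your proposal is correct and follows essentially the same route as the paper: choose a valuation with $v(p_j)=C^+$ for all $j$, $v(\alpha(n))=T^+$ and $v(\beta_i(n))=C^+$ for all $i$, then use the determinism of $\Box$ in {\bf T45m} and the implication table to make every disjunct of $\delta(n)$ non-designated, and conclude by soundness. The only difference is that you spell out in detail the well-definedness of this valuation (the chain of shared subformulas $d_1,\ldots,d_{n-1}$), which the paper simply asserts with ``note that this is always possible''.
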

\begin{proof}
Let $v$ be a valuation over the Nmatrix for {\bf T45m} such that $v(p_j)=C^+$ for every $1 \leq j \leq n$; $v(\alpha(n))=T^+$, and $v(\beta_i(n))=C^+$ for every $1 \leq i \leq n$ (note that this is always possible, from the definition of the Nmatrix for {\bf T45m}). From this,  $v(\Box \alpha(n))$ is designated but $v(\Box \beta_i(n))$ is not, for every $i$. This being so, $v((\Box \alpha(n) \to \Box \beta_i(n))$ is not designated, for every $i$, hence $v(\delta(n))$ is not designated. This means that $\delta(n))$ is not a valid formula for the Nmatrix for  {\bf T45m} and so, by soundness, it is not a theorem of  {\bf T45m}.
\end{proof}

\

\noindent Now, the first theorem of uncharacterizability by finite matrices is obtained:

\begin{theorem} \label{Dug1}
  No system between {\bf Km} and {\bf T45m} can be characterized by a single finite deterministic logical matrix.
\end{theorem}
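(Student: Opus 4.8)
The plan is to obtain a contradiction directly from Propositions~\ref{M-validates-delta} and~\ref{prop-delta-not-valid}, which were tailored for exactly this purpose. Suppose, for contradiction, that some logic \textbf{S} with ${\bf Km} \subseteq {\bf S} \subseteq {\bf T45m}$ is characterized by a single finite deterministic logical matrix $\mathcal{M} = \langle M, D\rangle$, and set $n = |M|$. First I would dispose of the trivial case $n = 1$: if $D = \emptyset$ then $\mathcal{M}$ validates nothing, contradicting ${\bf Km}\subseteq{\bf S}$ (e.g.\ $p\to p$ is a theorem of \textbf{Km}); if $D = M$ then $\mathcal{M}$ validates every formula, so \textbf{S} would be trivial, contradicting ${\bf S}\subseteq{\bf T45m}$ (the latter has a sound Nmatrix with non-designated values, hence non-theorems). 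So $n \geq 2$, and therefore $n+1 \geq 3$.

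Next I would observe that, since ${\bf Km}\subseteq{\bf S}$ and $\mathcal{M}$ characterizes \textbf{S}, the matrix $\mathcal{M}$ is a model of \textbf{Km} in the sense demanded by Proposition~\ref{M-validates-delta}: every axiom schema of \textbf{Km} is valid in $\mathcal{M}$ (being a theorem of \textbf{S}) and \MP\ preserves membership in $D$ (being a sound rule of \textbf{S}). Applying Proposition~\ref{M-validates-delta} then yields that $\mathcal{M}$ validates $\delta(n+1)$. Since $\mathcal{M}$ characterizes \textbf{S}, it follows that $\vdash_{\bf S}\delta(n+1)$, and hence, by ${\bf S}\subseteq{\bf T45m}$, also $\vdash_{\bf T45m}\delta(n+1)$. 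But $n+1\geq 3$, so Proposition~\ref{prop-delta-not-valid} tells us that $\delta(n+1)$ is \emph{not} a theorem of \textbf{T45m} --- a contradiction. Hence no such $\mathcal{M}$ exists.

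I expect the only genuinely delicate point to be the step ``$\mathcal{M}$ characterizes \textbf{S} and ${\bf Km}\subseteq{\bf S}$, therefore $\mathcal{M}$ is a model of \textbf{Km}'': one must be explicit about the chosen notion of ``characterization'' (theoremhood versus the full consequence relation) and check that it delivers both ingredients actually used inside the proof of Proposition~\ref{M-validates-delta} --- namely the validity in $\mathcal{M}$ of the \textbf{Km}-theorems $\beta\to\beta$, $\beta\to(\beta\vee\gamma)$ and $\gamma\to(\beta\vee\gamma)$, together with soundness of \MP. Everything else is routine bookkeeping with the inclusions ${\bf Km}\subseteq{\bf S}\subseteq{\bf T45m}$ and the elementary bound $n\geq 2$; in particular no computation inside $\mathcal{M}$ is required, since the pigeonhole collapse of two of the variables $p_1,\dots,p_{n+1}$ is already encapsulated in Proposition~\ref{M-validates-delta}.
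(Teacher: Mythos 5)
Your proposal is correct and follows essentially the same route as the paper: rule out $n=1$, apply Proposition~\ref{M-validates-delta} to get $\models_{\mathcal{M}}\delta(n+1)$, transfer theoremhood up to {\bf T45m} via the inclusions, and contradict Proposition~\ref{prop-delta-not-valid}. Your case split for $n=1$ (empty versus full set of designated values) is slightly more careful than the paper's one-line remark, but the argument is the same.
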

\begin{proof}
Suppose, by absurd, that a system  {\bf S} lying between {\bf Km} and {\bf T45m} can be semantically characterized by a finite logical matrix $\mathcal{M}$ with $n$ elements. It is worth noting that $n \geq 2$.  Indeed, if $n=1$ then  {\bf S} would be the trivial logic and so it could not be contained in {\bf T45m}.

Since $\mathcal{M}$  is a model of {\bf Km} then $\mathcal{M}$ validates the formula  $\delta(n+1)$, by Proposition~\ref{M-validates-delta}. Given that {\bf S} is complete w.r.t.  $\mathcal{M}$, the formula  $\delta(n+1)$ is a theorem of  {\bf S} and so it is a theorem of {\bf T45m}. But this contradicts Proposition~ \ref{prop-delta-not-valid}. Therefore, {\bf S}  cannot be characterized by a single finite logical matrix.
\end{proof}

\

\noindent The second theorem of uncharacterizability by finite matrices is obtained for the systems  {\bf Tmd} and {\bf T4md} introduced in Section~\ref{sectDelta}, in which the implication is deterministic. Firstly, some previous result are necessary:

\begin{definition}
Let $n \geq 3$ be a natural number, and let $p_1, \ldots p_n$  be $n$ different propositional variables. We define the following formula schema:
$$\gamma(n) = \bigvee \{(\Box \neg \Box\alpha(n) \to \Box \neg \Box \beta_i(n))  \ : \ 1 \leq i \leq n\}$$
where $\alpha(n)=\bigvee \{ p_j \ : \ 1 \leq j \leq n\}$ and $\beta_i(n)=\bigvee\{ p_j \ : \ 1 \leq j \leq n, \ j\neq i\}$ for every $1 \leq i \leq n$.
\end{definition}

\begin{proposition} \label{M-validates-gamma} Let ${\bf S} \in \{ {\bf Tmd}, {\bf T4md}\}$. Let $\mathcal{M}=\langle M,D\rangle$ be a (deterministic) logical matrix such that $M$ has $n$ elements, and $\mathcal{M}$ is a model of {\bf S}. Then, $\mathcal{M}$ validates the formula  $\gamma(n+1)$.
\end{proposition}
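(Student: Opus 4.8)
The plan is to mimic the proof of Proposition~\ref{M-validates-delta} almost verbatim, the only new ingredient being the observation that in a \emph{deterministic} matrix the value of a compound formula is a function of the values of its immediate subformulas, so an equality between two formulas' values can be propagated through $\neg$ and $\Box$ as many times as we like.

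First I would fix an $n$-valued logical matrix $\mathcal{M}=\langle M,D\rangle$ which is a model of ${\bf S}$ and an arbitrary valuation $h$ over $\mathcal{M}$. Since $\gamma(n+1)$ involves the $n+1$ distinct variables $p_1,\ldots,p_{n+1}$ while $|M|=n$, by the pigeonhole principle there are indices $i\neq k$ with $1\leq i,k\leq n+1$ and $h(p_i)=h(p_k)$. As in Proposition~\ref{M-validates-delta}, the only variable occurring in $\alpha(n+1)$ but not in $\beta_i(n+1)$ is $p_i$, whose value coincides with that of $p_k$ (which does occur in $\beta_i(n+1)$); since $h$ respects the deterministic interpretation of $\vee$, this yields $h(\alpha(n+1))=h(\beta_i(n+1))$.

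Next, applying the determinism of $\mathcal{M}$ to the operations $\Box$ and $\neg$, I would successively obtain $h(\Box\alpha(n+1))=h(\Box\beta_i(n+1))$, then $h(\neg\Box\alpha(n+1))=h(\neg\Box\beta_i(n+1))$, and finally $h(\Box\neg\Box\alpha(n+1))=h(\Box\neg\Box\beta_i(n+1))$; call this common value $a$. Hence $h$ evaluates the $i$-th disjunct of $\gamma(n+1)$ to $a\to a$ (where $\to$ now denotes the interpretation in $\mathcal{M}$). Since both {\bf Tmd} and {\bf T4md} extend {\bf PC}, the schema $\beta\to\beta$ is a theorem of ${\bf S}$ and therefore valid in $\mathcal{M}$, so $a\to a\in D$. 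Finally, using that $\beta\to(\beta\vee\gamma)$ and $\gamma\to(\beta\vee\gamma)$ are likewise theorems of ${\bf S}$ (hence validated by $\mathcal{M}$), the designation of a single disjunct forces the designation of the whole iterated disjunction; thus $h(\gamma(n+1))\in D$. As $h$ was arbitrary, $\mathcal{M}$ validates $\gamma(n+1)$.

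There is essentially no genuine obstacle here beyond bookkeeping; the one point that must not be glossed over is that determinism is used essentially, since it is precisely determinism that licenses passing from $h(\alpha)=h(\beta)$ to $h(\Box\neg\Box\alpha)=h(\Box\neg\Box\beta)$ — a step that fails in an Nmatrix, which is exactly why the non-deterministic semantics escapes this Dugundji-style obstruction. It is also worth noting that the hypothesis ${\bf S}\in\{{\bf Tmd},{\bf T4md}\}$ is used only insofar as both systems contain {\bf PC} (making available the propositional theorems $\beta\to\beta$, $\beta\to(\beta\vee\gamma)$ and $\gamma\to(\beta\vee\gamma)$); the specific modal axioms of these systems play no role in this particular proposition.
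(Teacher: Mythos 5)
Your proposal is correct and coincides with the paper's own argument: the paper proves Proposition~\ref{M-validates-gamma} simply by noting it is ``similar to that for Proposition~\ref{M-validates-delta}'', and what you wrote is precisely that adaptation, propagating the pigeonhole-induced equality $h(\alpha(n+1))=h(\beta_i(n+1))$ through the deterministic operations $\Box$, $\neg$, $\Box$ and then using the {\bf PC} theorems $\beta\to\beta$, $\beta\to(\beta\vee\gamma)$ and $\gamma\to(\beta\vee\gamma)$. Your closing remarks on the essential role of determinism and the irrelevance of the specific modal axioms are accurate and consistent with the paper's intent.
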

\begin{proof}
The proof is similar to that for Proposition~\ref{M-validates-delta}.
\end{proof}

\

\begin{proposition} \label{prop-gamma-not-valid}
Let ${\bf S} \in \{ {\bf Tmd}, {\bf T4md}\}$. Then, the  formula $\gamma(n)$ is not a theorem of {\bf S}, for every  $n \geq 3$.
\end{proposition}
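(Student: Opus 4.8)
The plan is to follow the same pattern as the proof of Proposition~\ref{prop-delta-not-valid}: by the soundness direction of Theorem~\ref{Tmd-comp} for {\bf Tmd} (and of the analogous soundness result for {\bf T4md} stated just above it), it suffices to produce, for each $n \geq 3$, a single valuation $v$ over the relevant (deterministic-implication) Nmatrix for which $v(\gamma(n))$ is non-designated. The extra $\Box\neg\Box$-prefix that distinguishes $\gamma(n)$ from $\delta(n)$ is exactly what makes such a valuation available once the implication has been made deterministic; the residual non-determinism I intend to exploit is that $\Box(C^+) = \{C^-,F^-\}$, which holds in both {\bf Tmd} and {\bf T4md}.

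Concretely, I would set $v(p_j) = C^+$ for every $1 \leq j \leq n$. Since disjunction in these systems is the deterministic join on the chain $F^- \leq C^- \leq C^+ \leq T^+$ and every $p_j$ has value $C^+$, the values of $\alpha(n)$ and of every $\beta_i(n)$ are forced to be $C^+$ (the hypothesis $n \geq 3$ guarantees each $\beta_i(n)$ is a genuine disjunction). Next I would use the non-determinism of $\Box$ at $C^+$ to make two opposite choices: $v(\Box\alpha(n)) = F^-$, but $v(\Box\beta_i(n)) = C^-$ for every $i$. As negation is deterministic this forces $v(\neg\Box\alpha(n)) = T^+$ and $v(\neg\Box\beta_i(n)) = C^+$. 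Applying $\Box$ once more, $v(\Box\neg\Box\alpha(n)) \in \Box(T^+)$, which is $\{T^+,C^+\}$ in {\bf Tmd} and $\{T^+\}$ in {\bf T4md}, so in either case I may take $v(\Box\neg\Box\alpha(n)) = T^+$, a designated value; whereas $v(\Box\neg\Box\beta_i(n)) \in \Box(C^+) = \{C^-,F^-\}$, necessarily non-designated. Reading off the deterministic implication table, each disjunct $\Box\neg\Box\alpha(n) \to \Box\neg\Box\beta_i(n)$ then has a designated antecedent and a non-designated consequent, so it receives a non-designated value; hence $\gamma(n)$, being the join of these non-designated values, is itself non-designated, and by soundness $\gamma(n)$ is not a theorem of {\bf S}.

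The one point requiring care — and the conceptual reason for the double modality — is that, with the deterministic implication, one cannot directly separate $\Box\alpha(n)$ from $\Box\beta_i(n)$: forcing $v(\Box\alpha(n))$ to be designated forces $v(\alpha(n)) = T^+$, hence $v(p_k) = T^+$ for some $k$, hence $v(\beta_i(n)) = T^+$ for every $i \neq k$, which makes those disjuncts designated; so $\delta(n)$ would in fact be valid here. The wrapper $\Box\neg\Box$ repairs this: a non-designated $\Box\beta_i(n)$ of value $C^-$ negates to $C^+$, which $\Box$ returns to the non-designated region, while the choice $\Box\alpha(n) = F^-$ negates to $T^+$, which $\Box$ keeps designated.

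I expect the only genuine verification to be the bookkeeping that the value assignment above is mutually consistent on the heavily overlapping subformulas of $\gamma(n)$ and so really does extend to a valuation of the Nmatrix — but this is routine, since the forced subformula values are all $C^+$, the single formula $\Box\alpha(n)$ is assigned one value, and the assignments to the distinct formulas $\Box\beta_i(n)$ are made independently of one another.
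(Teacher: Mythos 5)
Your proposal is correct and follows essentially the same route as the paper's own proof: assign $C^+$ to all variables, exploit the non-determinism $\Box(C^+)=\{C^-,F^-\}$ to choose $\Box\alpha(n)=F^-$ and $\Box\beta_i(n)=C^-$, so that after negation and a second $\Box$ the antecedents are designated and the consequents are not, making every disjunct and hence $\gamma(n)$ non-designated, and conclude by soundness. Your additional remark explaining why the double modality is needed (i.e.\ why $\delta(n)$ itself is valid once the implication is deterministic) is accurate but not required for the argument.
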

\begin{proof}
Let $v$ be a valuation over the Nmatrix for {\bf S} such that $v(p_j)=C^+$ for every $1 \leq j \leq n$. Then, $v(\alpha(n))=v(\beta_i(n))=C^+$ for every $1 \leq i \leq n$, since the disjunction is deterministic. Now, assume that  $v(\Box \alpha(n))=F^-$ and $v(\Box \beta_i(n))=C^-$, for every $i$  (note that this is always possible, from the definition of the multioperator $\Box$ of the  Nmatrix for {\bf S}). From this, $v(\neg\Box \alpha(n))=T^+$ and $v(\Box \beta_i(n))=C^+$, for every $i$. This being so, $v(\Box \neg\Box \alpha(n))$ is designated but $v(\Box \neg\Box \beta_i(n))$ is not. Hence, $v((\Box \neg\Box \alpha(n) \to \Box \neg\Box \beta_i(n))$ is not designated, for every $i$, and so $v(\gamma(n))$ is not designated. From this, $\gamma(n))$ is not a valid formula for the Nmatrix for  {\bf S} and so, by soundness, it is not a theorem of  {\bf S}.
\end{proof}

\begin{theorem}  \label{Dug2}
Let ${\bf S} \in \{ {\bf Tmd}, {\bf T4md}\}$. Then, {\bf S} cannot be characterized by a single finite deterministic logical matrix.
\end{theorem}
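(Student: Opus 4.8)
The plan is to run exactly the argument of Theorem~\ref{Dug1}, but now with the witness family $\gamma(n)$ in place of $\delta(n)$, so that Propositions~\ref{M-validates-gamma} and~\ref{prop-gamma-not-valid} play the roles of Propositions~\ref{M-validates-delta} and~\ref{prop-delta-not-valid}. Assume, towards a contradiction, that ${\bf S}$ (which is {\bf Tmd} or {\bf T4md}) is characterized by a single finite deterministic logical matrix $\mathcal{M}=\langle M,D\rangle$, and put $n=|M|$. First I would observe that $n\geq 2$: the logic ${\bf S}$ is non-trivial, since it is sound with respect to an Nmatrix possessing non-designated values (so, e.g., $p\to q$ is not a theorem), whereas a one-element matrix characterizes only the trivial logic. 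Hence $n+1\geq 3$.

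Next, since $\mathcal{M}$ is a model of ${\bf S}$ with $n$ elements, Proposition~\ref{M-validates-gamma} guarantees that $\mathcal{M}$ validates $\gamma(n+1)$. Because ${\bf S}$ is, by assumption, complete with respect to $\mathcal{M}$, it follows that $\gamma(n+1)$ is a theorem of ${\bf S}$. But $n+1\geq 3$, so Proposition~\ref{prop-gamma-not-valid} tells us that $\gamma(n+1)$ is \emph{not} a theorem of ${\bf S}$ --- a contradiction. Therefore no such finite matrix exists, which is the assertion.

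It is worth making explicit the content hidden in Proposition~\ref{M-validates-gamma}, whose proof is only said to be ``similar'' to that of Proposition~\ref{M-validates-delta}: in a deterministic matrix every connective is a genuine function of the values of its arguments, so given a valuation $h$ over $\mathcal{M}$ and the $n+1$ variables $p_1,\dots,p_{n+1}$, the pigeonhole principle yields $i\neq k$ with $h(p_i)=h(p_k)$, and then $\beta_i(n+1)$ recovers, using $p_k$, the disjunct $p_i$ that it omits, so $h(\alpha(n+1))=h(\beta_i(n+1))$. Applying $\Box$, then $\neg$, then $\Box$ again --- all functional --- gives $h(\Box\neg\Box\alpha(n+1))=h(\Box\neg\Box\beta_i(n+1))$; since $\beta\to\beta$ is a theorem of ${\bf S}$ the $i$-th disjunct of $\gamma(n+1)$ is evaluated into $D$, and since $\beta\to(\beta\vee\gamma)$ and $\gamma\to(\beta\vee\gamma)$ are theorems, the whole disjunction lands in $D$; as $h$ was arbitrary, $\mathcal{M}\models\gamma(n+1)$.

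I expect the only genuine work to lie in Proposition~\ref{prop-gamma-not-valid}, that is, in exhibiting for each $n\geq 3$ an ${\bf S}$-valuation refuting $\gamma(n)$: set $v(p_j)=C^+$ for all $j$, so that, using that $\vee$ is deterministic, $v(\alpha(n))=v(\beta_i(n))=C^+$ for every $i$; then exploit the non-determinism of $\Box$ to choose $v(\Box\alpha(n))=F^-$ while $v(\Box\beta_i(n))=C^-$ for every $i$, a choice compatible with the $\Box$-multitable of both {\bf Tmd} and {\bf T4md}. Consequently $v(\neg\Box\alpha(n))=T^+$ and $v(\neg\Box\beta_i(n))=C^+$, so $v(\Box\neg\Box\alpha(n))$ is designated while each $v(\Box\neg\Box\beta_i(n))$ is not; every disjunct of $\gamma(n)$ is thus non-designated, and since the derived $\vee$ of non-designated values is non-designated, $v(\gamma(n))$ is non-designated, whence $\gamma(n)$ is not a theorem by soundness. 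Once this is verified, the contradiction above closes the proof.
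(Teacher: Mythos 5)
Your proposal is correct and follows essentially the same route as the paper: assume a characterizing $n$-element matrix, invoke Proposition~\ref{M-validates-gamma} to get validity of $\gamma(n+1)$, use completeness to conclude $\gamma(n+1)$ is a theorem of {\bf S}, and contradict Proposition~\ref{prop-gamma-not-valid}. Your fleshed-out pigeonhole argument for Proposition~\ref{M-validates-gamma} and the explicit refuting valuation for Proposition~\ref{prop-gamma-not-valid} match the paper's (sketched) proofs of those auxiliary results, so there is nothing missing.
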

\begin{proof}
Suppose, by absurd, that  {\bf S} can be semantically characterized by a finite logical matrix $\mathcal{M}$ with $n$ elements. As in the proof of Theorem~\ref{Dug1} it follows that $n \geq 2$.

By Proposition~\ref{M-validates-gamma} it follows that $\mathcal{M}$ validates the formula  $\gamma(n+1)$. Given that {\bf S} is complete w.r.t.  $\mathcal{M}$, the formula  $\gamma(n+1)$ is a theorem of  {\bf S}. But this contradicts Proposition~ \ref{prop-gamma-not-valid}. Therefore, {\bf S}  cannot be characterized by a single finite logical matrix.
\end{proof}

\

\noindent
Finally, it will be shown that all the modal systems introduced above are conservative extensions of  classical propositional logic {\bf PC}:

\begin{proposition} \label{cons1}
Every system {\bf S} between {\bf Km} and {\bf T45m} is a conservative extension of classical propositional logic {\bf PC}. That is, given a formula $\alpha$ over the signature $\{\neg,\to\}$, $\alpha$ is valid in  {\bf S} iff $\alpha$ is valid in {\bf PC}.
\end{proposition}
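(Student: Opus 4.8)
The plan is to establish the biconditional in its two directions, reading ``$\alpha$ is valid in {\bf S}'' as ``$\vdash_{\bf S}\alpha$'' and ``$\alpha$ is valid in {\bf PC}'' as ``$\alpha$ is a classical tautology'' (equivalently, by completeness of {\bf PC}, as $\vdash_{\bf PC}\alpha$); here ``{\bf S} lies between {\bf Km} and {\bf T45m}'' means that every {\bf Km}-theorem is an {\bf S}-theorem and every {\bf S}-theorem is a {\bf T45m}-theorem. One direction is immediate and purely syntactic: {\bf Km} is by construction the Hilbert calculus obtained from {\bf PC} by adding modal axiom schemas, with {\MP} as the only inference rule, so $\vdash_{\bf PC}\alpha$ gives $\vdash_{\bf Km}\alpha$, and since {\bf S} lies above {\bf Km} we conclude $\vdash_{\bf S}\alpha$; note that this half does not even use that $\alpha$ is modality-free.

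For the converse I would use the strongest system of the family, {\bf T45m}, as a semantic probe. The key observation, read off directly from the tables in Section~\ref{operators}, is that the two-element set $\{T^+,F^-\}$ is closed under the multioperators $\neg$ and $\to$ of the Nmatrix for {\bf T45m}, and that on this set those multioperators are single-valued and reproduce the classical truth tables once $T^+$ is read as the (designated) value \emph{true} and $F^-$ as the (non-designated) value \emph{false}: indeed $\neg T^+=\{F^-\}$, $\neg F^-=\{T^+\}$, $T^+\to T^+=\{T^+\}$, $T^+\to F^-=\{F^-\}$, $F^-\to T^+=\{T^+\}$ and $F^-\to F^-=\{T^+\}$. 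Hence, for any classical two-valued valuation $w$ of the propositional variables, the function $v$ given by $v(p)=T^+$ when $w(p)$ is true and $v(p)=F^-$ otherwise, extended to all of $For$, is a legitimate valuation over the Nmatrix for {\bf T45m}; an easy induction on the build-up of a $\{\neg,\to\}$-formula $\beta$ then shows that $v(\beta)\in\{T^+,F^-\}$ and that $v(\beta)$ is designated if and only if $w(\beta)$ is true.

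To close, suppose $\vdash_{\bf S}\alpha$ with $\alpha$ a formula over $\{\neg,\to\}$. Since {\bf S} lies below {\bf T45m} we have $\vdash_{\bf T45m}\alpha$, so by soundness of {\bf T45m} the formula $\alpha$ is valid in its Nmatrix; in particular $v(\alpha)$ is designated for the valuation $v$ associated with an arbitrary classical valuation $w$, and therefore $w(\alpha)$ is true. As $w$ was arbitrary, $\alpha$ is a classical tautology, and so $\vdash_{\bf PC}\alpha$. I do not expect a genuine obstacle here: the step requiring the most care is merely confirming that $\{T^+,F^-\}$ is a submultialgebra (for the connectives $\neg$ and $\to$) of the multialgebra underlying {\bf T45m}, so that the map $v$ is indeed a {\bf T45m}-valuation --- a one-line inspection of the tables --- together with the remark that it is precisely the passage to the maximal system {\bf T45m} that turns the hypothesis ``{\bf S} lies below {\bf T45m}'' into an applicable soundness statement.
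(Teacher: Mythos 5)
Your proposal is correct and follows essentially the same route as the paper: one direction is the syntactic observation that {\bf S} extends {\bf PC}, and the other embeds the classical two-valued matrix into the Nmatrix of {\bf T45m} via the set $\{T^+,F^-\}$ (closed under $\neg$ and $\to$ with the classical truth tables) and invokes soundness of {\bf T45m}. The only cosmetic difference is that the paper argues contrapositively from a falsifying classical valuation and notes that $\{T^+,F^-\}$ is in fact a submultialgebra of the whole {\bf T45m} multialgebra, whereas you extend the valuation arbitrarily on modal formulas, which is equally fine.
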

\begin{proof}
Given that {\bf S} is an extension of {\bf PC}, it follows that any valid formula of {\bf PC} is valid in {\bf S}. Now, suppose that $\alpha$ is not valid in {\bf PC}. Then, there is a 2-valued classical valuation $v$ such that $v(\alpha)=0$. It is clear that the 2-valued Boolean algebra for {\bf PC} can be defined with domain $\{T^+,F^-\}$ such that  $T^+$ corresponds to $1$ and $F^-$ corresponds to $0$. Moreover, this  algebra, seen as a multialgebra, is a submultialgebra of the Nmatrix of   {\bf T45m}, and its designated truth-value is also designated in  {\bf T45m}. From this, the valuation $v$ can be seen as a  valuation $\bar v$ over  the Nmatrix of   {\bf T45m} such that  $\bar v(\beta)$ is designated iff $v(\beta)=1$, for every formula $\beta$. Thus, the formula $\alpha$ is not valid in  {\bf T45m}, whence it is not valid in {\bf S}.
\end{proof}

\section{Final Remarks and Future Work}

Although we have presented an interpretation for Ivlev's semantics, we do not want to argue that the Nmatrices presented here for negation and for modal operators better capture the natural language use of negation, implication and modal operators. There are many possibilities for defining these operators such that all the resulting systems are (conservative) extensions of classical logic. We have presented here only a few of them that already exist in the literature, and that seemed reasonable enough.

From our previous discussion and from the results obtained here, we believe that there are good reasons to deal with modal logics without any introduction rule for modal operators like the necessitation rule. The absence of any modal inference rule  allows us to consider a  radical distinction between a formula being tautology and being necessary, in a similar way that Kripke semantics radically separates the notions between being necessary and being \emph{a priori}.

We also believe that non-deterministic semantics has a promising future in several fields of logic, in particular, modal logic. In the case of propositional modal logic in alethic contexts, they could solve paradoxes that are formulated by the use of the necessitation rule, such as Fitch's paradox, for example. In deontic contexts, systems weaker than {\bf D} could solve deontic paradoxes, like Chisholm's paradox.

There are many technical results to be explored in this field. Lemmon and Scott have proved in \cite{lem:sco:77} a generalized result of completeness through Kripke semantics.\footnote{These results were generalized by H. Sahlqvist in~\cite{Sahlqvist}, by covering a wider class of axiom types.} Let $\Box^n$ and $\Diamond^n$ be denoting $n$-iterations of those operators. Given an axiom schema of the form
$$\Diamond^k \Box^l \alpha \to ^m \Box\Diamond^n \alpha$$
for some fixed natural numbers $k$, $l$, $m$, $n$, it is possible to obtain an accessibility property between the worlds so that the class of models having this property will characterize the system obtained by adding to {\bf K}  the given axiom schema.

An analogous result could be obtained by means of non-deterministic semantics. By varying the number of  truth-values chosen by the multioperator interpreting the  $\Box$ connective (while keeping fixed the multioperators for negation and implication), we have a finite number of submultialgebras between {\bf Tm} and {\bf T45m}. In fact, there are $2 ^4 = 16$ possible multioperators for the modal connective, hence 16 multialgebras can be defined as stipulated above. There are, therefore, 14 systems between {\bf Tm} and {\bf T45m}. The problem here is how to obtain an axiom schema such that, added to {\bf Tm}, produces completeness.
 The same could be done for the other connectives by considering, for example, only those Nnmatrices that extend the classical propositional logic. The previous arguments could also be applied to {\bf Dm} and {\bf Km}.

In another line of research, the generalization of modal Nmatrices to  swap structures semantics proposed in~\cite{CG} opens interesting possibilities for Ivlev-like modal systems with respect to algebraizability in a wide sense (see~\cite{CFG}). Thus, it would be interesting the study of the modal systems proposed here and the ones analyzed in~\cite{con:per:14} from the broader perspective of swap structures.

Finally, another kind of non-deterministic semantics could be applied to Ivlev-like modal systems. For instance, Fidel structures (a.k.a. {\bf F}-structures) could be defined for some of these systems. {\bf F}-structures were originally proposed by M.~Fidel in 1977 for
da Costa's paraconsistent systems $C_n$ (see~\cite{fid:77}), proving for the first time the decidability of such systems. The {\em possible-translations semantics} introduced by W.~Carnielli in~\cite{car:1990} could also we applied to this kind of modal systems. More details about non-deterministic semantics and their relationships can be found in~\cite[Chapter~6]{CC16}. 

We believe that the considerations above show the legitimacy, the importance and the promising future of non-deterministic semantics applied to propositional modal logic. The generalization of this framework to first-order logics is the subject of a forthcoming paper.

\section{Acknowledgements}

This research was partially supported by the Centre International de Math\'ematiques et d'Informatique de Toulouse (CIMI) through contract ANR-11-LABEX-0040-CIMI within the program ANR-11-IDEX-0002-02. Coniglio  acknowledges support from CNPq (Brazil), individual research grant number 308524/2014-4. Peron was partially supported by FAPESC (Brazil), process number FAPESC771/2016.

\bibliographystyle{plain}

\end{document}